\newtheorem{theorem}{Theorem}[section]
\newtheorem{remark}[theorem]{Remark}
\newtheorem{assumption}[theorem]{Assumption}
\newtheorem{lemma}[theorem]{Lemma}
\newtheorem{proposition}[theorem]{Proposition}
\newtheorem{corollary}[theorem]{Corollary}
\def \cA{{\mathcal A}}
\def \cC{{\mathcal C}}
\def \cF{{\mathcal F}}
\def \cG{{\mathcal G}}
\def \cI{{\mathcal I}}
\def \cL{{\mathcal L}}
\def \cO{{\mathcal O}}
\def \cS{{\mathcal S}}
\def \E{\mathsf{E}}
\def \EQ{\mathsf{E}^{\mathsf{Q}}}
\def \P{\mathsf{P}}
\def \Q{\mathsf{Q}}
\def \R{\mathbb{R}}
\def \eps{\varepsilon}
\def \v{{\mathbf v}}
\def \XX{{\widehat X}}
\def \YY{{\widehat Y}}
\def \UU{{\widehat U}}
\def \uu{{\widehat u}}
\def \pp{{\widehat \pi}}
\def \PH{{\widehat \Phi}}
\def \OP{{\overline \P}}
\def \OE{{\overline \E}}
\def \OW{{\overline W}}
\def \OU{{\overline U}}
\def \DD{{\widehat D}}
\title[Dividend with partial information]{Optimal dividends with partial information \\ and stopping of a degenerate reflecting diffusion}
\author[T.~De Angelis]{Tiziano De Angelis}
\subjclass[2010]{60J70, 60G40, 91G80, 93E20}
\keywords{Singular control, optimal stopping, free boundary problems, partial information, dividend problem, reflected diffusions, Stroock-Williams equation}
\address{T.~De Angelis: School of Mathematics, University of Leeds, Woodhouse Lane, LS2 9JT Leeds, UK.}
\email{\href{mailto:t.deangelis@leeds.ac.uk}{t.deangelis@leeds.ac.uk}}
\date{\today}
\numberwithin{equation}{section}
\begin{document}

\begin{abstract}
We study the optimal dividend problem for a firm's manager who has partial information on the profitability of the firm. The problem is formulated as one of singular stochastic control with partial information on the drift of the underlying process and with absorption. In the Markovian formulation, we have a 2-dimensional degenerate diffusion, whose first component is singularly controlled and it is absorbed as it hits zero. The free boundary problem (FBP) associated to the value function of the control problem is challenging from the analytical point of view due to the interplay of degeneracy and absorption. We find a probabilistic way to show that the value function of the dividend problem is a smooth solution of the FBP and to construct an optimal dividend strategy. Our approach establishes a new link between multidimensional singular stochastic control problems with absorption and problems of optimal stopping with `creation'. One key feature of the stopping problem is that creation occurs at a state-dependent rate of the `local-time' of an auxiliary 2-dimensional reflecting diffusion.  
\end{abstract}

\maketitle

\section{Introduction}

We study a singular stochastic control problem on a linearly controlled, 1-dimensional Brownian motion $X$ with (random) drift $\mu$. The problem is motivated by the dividend problem, where $X$ denotes the revenues of a firm and the firm's manager needs to distribute dividends to the share-holders in an optimal way but being mindful of the risk of default. Similarly to the existing literature, we account for the risk of default by letting the process $X$ be absorbed upon reaching zero. 

%The dividend problem was first introduced in the context of insurance modelling by De Finetti in 1957 \cite{DeF}. One of its important features, and a reason for its popularity, is that the present value of the (optimally distributed) future dividends is a good measure of the market value of the firm. 
As one may expect, the optimal distribution of dividends is very sensitive to the profitability of the firm, which is encoded in the drift $\mu$ of the process $X$. A positive drift reflects a company in good health and, as a rule of thumb, dividends are paid when revenues are sufficiently high (which is expected to occur rather often) so to keep a low risk of default. On the contrary, a negative drift indicates a firm that operates at a loss and therefore should be wound up as soon as possible by paying out all dividends. 

Estimating profitability is a challenging task in many real-world situations which has already received attention in the mathematical economic literature; see, e.g., \cite{DMV} for investment timing, \cite{DS} for contract theory, \cite{DG} for asset trading. In order to capture this feature in a non-trivial but tractable way, we assume partial information on the drift of the process $X$. This is a novelty compared to existing models on dividend distribution.

We remark that statistical estimation of the drift of a drifting Brownian motion from observation of the process is a much less efficient procedure than estimation of its volatility. Indeed, over a given period of time $[0,T]$, the variance on the classical estimator for the volatility can be reduced by increasing the number of observations, whereas this is not the case for the variance on the estimator for the drift $\mu$. The latter depends on $1/T$ (see \cite[Example 2.1]{EL11} for a simple example), hence an accurate estimate of $\mu$ requires a long period of observation under the exact same market conditions, which in reality is not feasible.   

Our study shows how the flow of information affects the firm's manager optimal dividend strategy: as in the informal discussion above, dividends are paid only when revenues exceed a critical value $d^*$, however, in contrast to the existing literature this critical value changes dynamically according to the manager's current belief on the profitability of the firm; as we will explain in more detail below, such belief is described by a state variable $\pi\in(0,1)$, where a value of $\pi$ close to 1 indicates a strong belief in a positive drift and a value of $\pi$ close to 0 indicates a strong belief in a negative drift; we observe that the critical value of the revenues $d^*$ increases (but stays bounded) as $\pi$ increases, which is in line with the intuition that a firm with high profitability expects good performance and chooses to pay dividends when large revenues are realised, so that the risk of default is kept low and the business can be sustained over longer times; on the contrary, if there is a weak belief in the profitability of the firm, then dividends will be paid also for lower levels of the revenues, as there is no expectation that these will increase in the future. The partially informed manager of our firm, learns about the true value of profitability by observing the stream of revenues $X$ and adjusts her strategy accordingly, so that dividends are paid dynamically at different levels of revenue depending on the learning process.

The observation of $X$ will in the end reveal the true drift $\mu$ so that the belief of the firm's manager will eventually converge to either $\pi=0$ or $\pi=1$. Her dividend strategy will then converge to the corresponding strategy for the problem with full information (see Proposition \ref{prop:lims-d}). This shows that our model complements and extends the existing literature, which will be reviewed in the next section, by displaying a richer structure of the optimal solution and by effectively adding a new dimension to the classical problem (i.e., the belief). For a broader discussion on the economic foundations and implications of a dividend problem with partial information we also refer the reader to the introduction of the preprint \cite{DV}, where a special case of our problem is studied with different methods (a detailed comparison is given in the final three paragraphs of the next section).

\subsection{Mathematical background and overview of main results}
Our specific mathematical interest is in the explicit characterisation of the optimal control in terms of an optimal boundary arising from an associated free boundary problem. To the best of our knowledge the study of free boundaries for  singular stochastic control problems associated to diffusions with \emph{absorption} and \emph{partial information} has never been addressed in the literature. Recently \O ksendal and Sulem \cite{OS12} studied general maximum principles for singular control problems with partial information. Their approach relies mostly on backward stochastic differential equations (BSDEs) and they provide general abstract results linking the value of the singular control problem to the solution of suitable BSDEs. Here instead we focus on a specific problem with the aim of a more detailed study of the optimal control. It is worth noticing that \cite{OS12} does not consider the case of absorbed diffusions, which is a source of interesting mathematical facts in our paper, as we will discuss below.

For the sake of tractability we choose a model in which $\mu$ is a random variable that can only take two real values, i.e.~$\mu\in\{\mu_0,\mu_1\}$, with $\mu_0<\mu_1$. The company's revenue, net of dividend payments, at time $t$ reads
\begin{align}\label{dyn}
X^D_t=X_t-D_t:=x+\mu t+\sigma B_t-D_t
\end{align}
where $B$ is a Brownian motion, $\sigma>0$, and $D_t$ denotes the total amount of dividends paid up to time $t$ (notice that $D$ is a non-decreasing process and we choose it to be right-continuous).
As in the most canonical formulation of the dividend problem, the insurance company's manager wants to maximise the discounted flow of dividends until the firm goes bankrupt. Moreover, the manager can infer the true value of $\mu$ by observing the evolution of $X$. 

Using filtering techniques the problem can be written in a Markovian framework by considering simultaneously the dynamics of $X^D$ and of the process $\pi_t:=\P(\mu=\mu_1|\cF^X_t)$, where $\cF^X_t=\sigma(X_s,\,s\le t)$. This approach has a long and venerable history in optimal stopping theory, with early contributions dating back to work of Shiryaev in 1960s in the context of quickest detection (see \cite{S10} for a survey. See also \cite{PJ17} for some recent developments and further references). However, it seems that such model has never been adopted in the context of singular control. 

One difficulty that arises by the reduction to Markovian framework is that the dynamic of the state process is two dimensional and diffusive. This leads to a variational formulation of the stochastic control problem in terms of PDEs and therefore explicit solutions cannot be provided, in general. 

The literature on the optimal dividend problem is very rich with seminal mathematical contributions by Jeanblanc and Shiryaev \cite{JS} and Radner and Shepp \cite{RS}. More recent contributions include, among many others (see, e.g., the survey \cite{A09}), \cite{AGRS} and \cite{E15} who consider random interest rates, \cite{APP07} who allow for jumps in the dynamic of $X$, \cite{JP12} who consider a regime switching dynamic for the coefficients in \eqref{dyn}, \cite{BKY13} who consider jumps in the dynamic of $X$ and fixed transaction costs for dividend lump payments. However, research so far has largely focused on explicitly solvable examples. This means that, in the largest majority of papers, the underlying stochastic dynamics are either one dimensional, or two dimensional but with one of the state processes driven by a Markov chain. Moreover, the time horizon $\gamma^D$ of the optimisation is usually assumed to be the first time of $X^D$ falling below some level $a\ge 0$. Alternatively, capital injection is allowed and the opitimisation continues indefinitely, i.e. $\gamma^D=+\infty$. These choices of $\gamma^D$ make the problem time homogenous and easier to deal with. In absence of capital injection, even just assuming a finite time-horizon for the dividend problem, i.e.~taking $\gamma^D\wedge T$ for some deterministic $T>0$, introduces major technical difficulties. The latter were addressed first in \cite{G1} and \cite{G2} with PDE methods, and then in \cite{DeAE17} with probabilistic methods. Interestingly, the finite time-horizon is more easily tractable in presence of capital injection, as shown in \cite{FS18} using ideas originally contained in \cite{ElKK88}.

Here we take the approach suggested in \cite{DeAE17} but, as we will explain below, we substantially expand results therein. First we link our dividend problem to a suitable optimal stopping one. Then we solve the optimal stopping problem (OSP) by characterising its optimal stopping rule in terms of a free boundary $\pi\mapsto d(\pi)$. Finally, we deduce from properties of the value function $U$ of the OSP that the value function $V$ of the dividend problem is a strong solution of an associated variational inequality on $\R_+\times[0,1]$ with gradient constraint. Moreover, using the boundary $d(\cdot)$ we express the optimal dividend strategy as an explicit process depending on $t\mapsto d(\pi_t)$. It is worth noticing that we can prove that $V\in C^1(\R_+\times(0,1))$, with $V_{xx}$ and $V_{x\pi}$ belonging to $C(\R_+\times(0,1))$ and $V_{\pi\pi}\in L^\infty(\R_+\times (0,1))$. This type of global regularity cannot be easily obtained with PDE methods due to the degeneracy of the underlying diffusion.
Here we obtain these results with a careful probabilistic study of the value function $U$. In particular the argument used to prove $V_{\pi\pi}\in L^\infty(\R_+\times (0,1))$ in Proposition \ref{prop:vpipi} seems completely new in the related literature.

As in \cite{DeAE17} the presence of an absorbing point for the process $X^D$ `destroys' the standard link between optimal stopping and singular control. Such link has been studied by many authors: Bather and Chernoff \cite{BC67} and Bene{s}, Shepp and Witsenhausen \cite{BSW80} were the first to observe it and Taksar \cite{Taksar85} provided an early connection to Dynkin games. Extensions and refinements of the initial results were obtained in a long series of subsequent papers using different methodologies. Just to mention a few we recall \cite{BK}, \cite{ElKK88} and \cite{KS84} who address the problem with probabilistic methods, \cite{BenthReikvam} who use viscosity theory, \cite{GT08} who link singular control problems to switching problems. 

Departing from the literature mentioned above, here we prove that $V_x=U$ where now $U$ is the value function of an OSP whose underlying process is a 2-dimensional, uncontrolled, degenerate diffusion $(\XX,\pp)$, which lives in $\R_+\times[0,1]$ and is reflected at $\{0\}\times(0,1)$, towards the interior of the domain, along the direction of a state-dependent vector $\v(\pp)$ (see Section \ref{sec:stopping}). Moreover, upon each reflection, the gain process that is underlying the OSP increases exponentially at a rate that depends on the `intensity' of the reflection and on the value of the process $\pp_t$. We call this behaviour of the gain process: `state-dependent creation' of the process $(\XX,\pp)$ at $\{0\}\times(0,1)$ (cf.~\cite{Pe14}). Indeed it is interesting that the `creation' feature of our reflected process links our paper to work by Stroock and Williams \cite{SW05} and Peskir \cite{Pe14}, concerning a type of non-Feller boundary behaviour of 1-dimensional Brownian motion with drift. Notice however, that in those papers the creation rate is constant and the problem is set on the real line, so that the direction of reflection is fixed. Here instead we deal with an example of a non-trivial, two dimensional, extension of the problem studied in \cite{SW05} and \cite{Pe14}.

A striking difference with the problem studied in \cite{DeAE17} is the much more involved dynamics underlying the OSP and the behaviour of the gain process. In \cite{DeAE17} the state dynamics in the control problem is of the form $(t,\check{X}^D_t)$, with $\check{X}^D$ as in \eqref{dyn} but with deterministic constant drift. This leads to an optimal stopping problem involving a 1-dimensional Brownian motion with drift which is reflected at zero, and which is created (in the same sense as above) at a constant rate. The state variable `time' is unaffected by the link between the dividend problem and the stopping one. Here instead, the correlation in the dynamics of $X^D_t$ and $\pi_t$ in the control problem induces two main effects: (i) it causes for the reflection of the process 
$(\XX,\pp)$ to be along the stochastic vector process $t\mapsto\v(\pp_t)$ (see \eqref{XX}--\eqref{pp}), (ii) it generates a non-constant, creation rate that depends on the process $\pp$ (see \eqref{U}).

The reflection of $(\XX,\pp)$ at $\{0\}\times(0,1)$ is realised by an increasing process $(A_t)_{t\ge0}$ which we can write down explicitly (see \eqref{A}) and which we will informally refer to as `local-time' of $(\XX,\pp)$ at $\{0\}\times(0,1)$. Despite its use in solving the dividend problem, the OSP that we derive is interesting in its own right and belongs to a class of problems that, to the best of our knowledge, has never been studied before. In particular this is an optimal stopping problem on a multi-dimensional diffusions, reflected in a domain $\cO$, with a gain process that increases exponentially at a rate proportional to the local time spent by the process in some portions of $\partial\cO$ (moreover such rate is non-constant). 

In conclusion, we believe that the main mathematical contributions of our work are the following: (i) for the first time we characterise the free boundary associated to a singular stochastic control problem with partial information on the drift of the process and absorption, (ii) we obtain rather strong regularity results for the value $V$ of the control problem, despite degeneracy of the associated HJB operator, (iii) we find a non-trivial connection between singular control for multi-dimensional diffusions with absorption, and optimal stopping of reflected diffusions with `state-dependent creation', (iv) we solve an example of a new class of optimal stopping problems, whose popularity we hope will increase with the increasing understanding of their role in the dividend problem. 

After completing this work we learned about the preprint \cite{DV} where the same problem is addressed in the special case of $\mu_1=-\mu_0$. In that setting the problem's dimension can be reduced by a transformation that makes one of the two state processes purely controlled (a closer inspection reveals that this is in line with the case of a null drift in our \eqref{YY2}). The problem in \cite{DV} can be solved by `guess-and-verify' via a parameter-dependent family of ODEs with suitable boundary conditions. The methods of \cite{DV} cannot be used for generic $\mu_0$ and $\mu_1$ because the dimension reduction is impossible and the ODE becomes a 2-dimensional free boundary problem involving partial derivatives. 

Besides the methodolocial differences between the two papers, the optimal strategy obtained in \cite{DV} shares similarities with ours but it also features a remarkable difference. Due to the fact that one of the state variables is purely controlled, in \cite{DV} the level of future revenues at which dividends will be paid can only increase after each dividend payment. As stated in \cite{DV}, this can be understood as the firm's manager `becoming more confident about the relevance of their project'. When revenues reach a new maximum, this suggests to the manager that the drift be positive; however, the symmetric structure $\mu_1=-\mu_0$ is such that she does not subsequently change her view, even if revenues start fluctuating downwards. This fact stands in sharp contrast with our solution, which instead allows the manager to increase/decrease her revenues target level depending on the new information acquired.

The rest of the paper is organised as follows. In Section \ref{sec:setting} we cast the problem and provide its Markovian formulation. Section \ref{sec:verif} introduces the verification theorem which we aim at proving probabilistically in the subsequent sections. The main technical contribution of the paper is contained in Sections \ref{sec:stop}, \ref{sec:fine} and \ref{sec:solution}. In the first part of Section \ref{sec:stop} we introduce the stopping problem for a 2-dimensional degenerate diffusion with state-dependent reflection. Then, in the rest of Section \ref{sec:stop} and in Section \ref{sec:fine}, we study properties of the associated value function and obtain geometric properties of the optimal stopping set. In Section \ref{sec:solution} we prove that the value function and the optimal control of the dividend problem can be constructed from the value function of the optimal stopping problem and its optimal stopping region. A short appendix contains a rather standard proof of the verification theorem stated in Section \ref{sec:verif}.

\section{Setting}\label{sec:setting}
We consider a complete probability space $(\Omega,\cF,\P)$ equipped with a 1-dimensional Brownian motion $(B_t)_{t\ge0}$ and its natural filtration $(\cF^B_t)_{t\le0}$ completed with $\P$ null sets. On the same probability space we also have a random variable $\mu$ which is independent of $B$ and takes two possible real values $\mu_0<\mu_1$, with probability $\P(\mu=\mu_1)=\pi\in[0,1]$. Further, given $x>0$ and $\sigma>0$, we model the firm's revenue in absence of dividend payments by the process $(X_t)_{t\ge0}$ defined as
\begin{align}\label{X0}
X_t=x+\mu t+\sigma B_t\quad t\ge0.
\end{align}  
We denote by $(\cF^{X}_t)_{t\ge0}$ the filtration generated by $X$ and we say that a dividend strategy is a $(\cF^{X}_t)_{t\ge0}$-adapted, increasing, right-continuous process $(D_t)_{t\ge 0}$ with $D_{0-}=0$. In particular $D_t$ represents the cumulative amount of dividends paid by the firm up to time $t$ and we say that the firm's profit, under the dividend strategy $D$, is
\begin{align}\label{XD}
X^D_t=x+\mu t+\sigma B_t-D_t,\quad \text{for $t\ge0$}.
\end{align}  
Notice that for $D\equiv 0$ we formally have $X^0=X$. As it is customary in the dividend problem, we define a default time at which the firm stops paying dividends and we denote it by
\begin{align}
\gamma^D:=\inf\{t\ge0\,:\,X^D_t\le 0\}.
\end{align}

Equipped with this simple model for the firm's profitability, the manager of the firm wants to maximise the expected flow of discounted dividends until the default time, where discounting occurs at a constant rate $\rho>0$, i.e.:
\begin{align}\label{P1}
\text{Maximise the value of}\:\:\E\left[\int^{\gamma^D}_{0-}e^{-\rho t}dD_t\right]\:\:\text{over $D\in\cA$},
\end{align}
where $\cA$ denotes the set of admissible dividend strategies. In particular 
\begin{align}\label{cA}
\text{$D\in\cA$ iff $D$ is $(\cF^{X}_t)_{t\ge0}$-adapted, increasing, right-continuous,}\\
\text{with $D_{0-}=0$ and such that $D_t-D_{t-}\le X^D_{t-}$ for all $t\ge0$, $\P$-a.s.}\nonumber
\end{align}

It is important to notice that the drift of $X^D$ is not affected by the choice of $D$, so that $X=X^D+D$. Moreover, the control process $D$ is chosen by the firm's manager based on their observation of the process $X$ and it is therefore natural that $D_t$ should be $\cF^X_t$-measurable. %Then the filtrations generated by $X^D$ and $X^0$ are the same. 

It is well known that the dynamic \eqref{XD} may be rewritten in a more tractable Markovian form, thanks to standard filtering methods (see, e.g., \cite[Sec.~4.2]{Sh}). In particular, denoting $\pi_t:=\P(\mu=\mu_1\big|\cF^{X}_t)$, one can construct a $((\cF^X_t)_{t\ge 0},\P)$-Brownian motion $(W_t)_{t\ge0}$ and write the dynamics of the couple $(X^D_t,\pi_t)_{t\ge0}$ for all $t>0$ in the form 
\begin{align}%\label{Xpi1}
\label{Xpi1}& dX^D_t=[\mu_0+\hat \mu\, \pi_t] d t + \sigma dW_t-dD_t & X^D_{0-}=x,\\ 
\label{Xpi1b}& d\pi_t=\theta\pi_t(1-\pi_t)dW_t & \pi_0=\pi,
\end{align}
under the measure $\P$, with $\hat \mu:=\mu_1-\mu_0$ and $\theta:=\hat \mu/\sigma$. We notice that \eqref{Xpi1} can be obtained from \eqref{XD} by formally replacing $\mu$ with $\E[\mu|\cF^X_t]$. Moreover,  $(\pi_t)_{t\ge 0}$ in \eqref{Xpi1b} is a bounded martingale, hence it is a martingale on $[0,\infty]$ and, in particular, $\pi_\infty\in\{0,1\}$ since all information is revealed at time $t=\infty$. 

Intuitively, we can say that at any given time $t\ge 0$ the amount of new information which becomes available to the firm's manager is measured by the absolute value of the increment $\Delta \pi_t$. Then, the learning rate depends on the so-called {\em signal-to-noise} ratio $\theta$ and on the current belief $\pi_t$, which appear in the diffusion coefficient in \eqref{Xpi1b}. Given an increment $\Delta W_t$ of the Brownian motion, the value of $|\Delta \pi_t|$ is increasing in the signal-to-noise ratio, as expected. Further, the maximum of the diffusion coefficient (hence the maximum learning rate) occurs when $\pi_t=1/2$, which corresponds to the most uncertain situation.     

Since $(X^D_t,D_t,\pi_t,W_t)_{t\ge 0}$ is $(\cF^X_t)_{t\ge 0}$-adapted and we do not need to consider any other filtration, from now on we denote $\cF_t=\cF^X_t$ to simplify the notation.
In the new Markovian framework our problem \eqref{P1} reads 
\begin{align}\label{P2}
V(x,\pi)=\sup_{D\in\cA}\E_{x,\pi}\left[\int^{\gamma^D}_{0-}e^{-\rho t}dD_t\right]\quad \text{for all}\:\:(x,\pi)\in\R_+\times(0,1),
\end{align}
where $\E_{x,\pi}[\,\cdot\,]:=\E[\,\cdot\,|X_0=x,\pi_0=\pi]$.

The formulation in \eqref{P2} of the optimal dividend problem with partial information corresponds to a singular stochastic control problem involving a 2-dimensional degenerate diffusion which is killed upon leaving the set $\R_+\times(0,1)$ (recall that if $\pi_0\in(0,1)$ then $\pi_t\in(0,1)$ for all $t\in(0,+\infty)$, whereas if $\pi_0\in\{0,1\}$ then $\pi_t=\pi_0$ for all $t>0$). In the economic literature, the value function $V$ of \eqref{P2} is traditionally considered as the value of the firm itself.

\begin{remark}\label{rem:intuition}
The case of full information corresponds to $\pi\in\{0,1\}$. In this case it is known that if the drift $\mu\le0$ it is optimal to pay all dividends immediately and liquidate the firm. On the contrary, if $\mu>0$ then dividends should be paid gradually according to a strategy characterised by a Skorokhod reflection of the process $X^D$ against a positive (moving) boundary (see \cite{JS} for the stationary case and \cite{DeAE17} for the non-stationary one). 

In our setting with partial information, it is clear that $\mu_0<\mu_1\le 0$ would lead to an immediate liquidation of the firm. The cases $\mu_1>\mu_0\ge 0$ and $\mu_0<0<\mu_1$ instead need to be studied separately as they present subtle technical differences which would make a unified exposition rather lengthy. In this paper we start with the case $\mu_0<0<\mu_1$, which seems economically the most interesting as it represents the uncertainty of a firm who cannot predict exactly whether its line of business is following an increasing or decreasing future trend. 
%the future trend of its profitsdoesn't and would lead to a solution should be qualitatively similar to the case with positive drift in \cite{JS}. However, it is a priori unclear what should happen if , and which motivates this study. 
\end{remark}

Motivated by the remark above we make the following standing assumption throughout the paper: 
\begin{assumption}\label{ass:mu1}
We have $\mu_1>0>\mu_0$.
\end{assumption}
%\noindent{\color{red}A full study of the case $\mu_0\ge 0$ will be addressed in a separate paper.}

We close this section by introducing the infinitesimal generator $\cL_{X,\pi}$ associated to the uncontrolled process $(X_t,\pi_t)_{t\ge0}$. For functions $f\in C^2(\R_+\times[0,1])$ we have
\begin{align}\label{eq:L}
(\cL_{X,\pi} f)(x,\pi) :=&\tfrac{1}{2} \left(\sigma^2\, f_{xx} + 2\sigma\theta\pi(1-\pi)f_{x\pi}+ \theta^2\pi^2(1-\pi)^2f_{\pi\pi}\right)(x,\pi) \nonumber\\
&+ (\mu_0+\hat\mu\,\pi)f_x(x,\pi),
\end{align}
for $(x,\pi)\in\R_+\times[0,1]$ and where $f_{xx},f_{x\pi},f_{\pi\pi}$ are second derivatives and $f_x$ a first derivative.
For simplicity in the rest of the paper we also define
\begin{align}\label{cO}
\cO:=(0,+\infty)\times(0,1).
\end{align}
Moreover, given a set $A$ we denote by $\overline A$ its closure.

Following the approach introduced in \cite{DeAE17}, in the next section we will start our analysis by providing a verification theorem for $V$. Then we will use the latter to conjecture an optimal stopping problem that should be associated with $V_x$. It will soon become clear that the construction of \cite{DeAE17} is substantially easier than the one needed here. Our new construction also leads to a much more involved optimal stopping problem. 

%%%%%%%%%%%%%%%%%%%%%%%%%%%%%%%%%%%%%%%%%%%%%%%%%%%
\section{A verification theorem}\label{sec:verif}

A familiar heuristic use of the dynamic programming principle suggests that for any admissible control $D$ the process
\begin{align}\label{Vmg}
t\mapsto e^{-\rho (t\wedge\gamma^D)}V(X^D_{t\wedge\gamma^D},\pi_{t\wedge\gamma^D})+\int_{0-}^{t\wedge\gamma^D}e^{-\rho s}dD_s
\end{align}
should be a super-martingale and, if $D=D^*$ is an optimal control, then \eqref{Vmg} should be a martingale. Moreover, given a starting point $(x,\pi)$ one strategy could be to pay immediately a small amount $\delta$ of dividends, hence shifting the dynamics to the point $(x-\delta,\pi)$, and then continue optimally. Since this would in general be sub-optimal, one has 
\[
V(x,\pi)\ge V(x-\delta,\pi)+\delta\implies V_x(x,\pi)\ge 1.
\] 
If the inequality is strict, then the suggested strategy is strictly sub-optimal. Hence, the firm should pay dividends when $V_x=1$ and do nothing when $V_x> 1$. It is also clear from \eqref{P2} that $V(0,\pi)=0$ for all $\pi\in[0,1]$.

Based on this heuristic we can formulate the following verification theorem. Its proof is rather standard (see, e.g., \cite[Thm.~4.1, Ch.~VIII]{FS}) and we give it in appendix for completeness.
\begin{theorem}\label{thm:verif}
Let $v\in C^1(\cO)\cap C(\overline{\cO})$ with $v_{xx},\,v_{x\pi}\in C(\cO)$ and $v_{\pi\pi}\in L^\infty_{loc}(\cO)$. Assume that $0\le v(x,\pi)\le c\, x$, for all $(x,\pi)\in\cO$ and some $c>0$, and that it solves
\begin{align}
\label{HJB1}&\max\left\{(\cL_{X,\pi}-\rho)v,1-v_x\right\}(x,\pi)=0,&\text{for a.e.~$(x,\pi)\in\cO$}\\
\label{HJB2}&v(0,\pi)=0,&\text{for $\pi\in[0,1]$}.
\end{align}
Then $v\ge V$ on $\cO$. 

Let us denote 
\begin{align}\label{Iv}
\cI_v:=\{(x,\pi)\in\cO\,:\,v_x(x,\pi)>1\}.%\quad\text{and}\quad \{x=0\}:=\{0\}\times[0,1].
\end{align}
In addition to the above assume that: $v\in C^2(\overline{\cI_v}\cap\cO)$ and there exists $D^{v}\in\cA$ such that, $\P_{x,\pi}$-almost surely for all $0\le t\le \gamma^{D^{v}}$, we have
\begin{align}
\label{SK-0}& (X^{D^{v}}_t,\pi_t)\in\overline{\cI_v},\\
\label{SK-1}& dD^{v}_t=\mathds{1}_{\{(X^{D^{v}}_{t-},\pi_t)\notin \,\cI_v\}}dD^{v}_t,\\
\label{SK-2}&\int_0^{\Delta D^v_t}\mathds{1}_{\{(X^{D^{v}}_{t-}-z,\pi_t)\in \,\cI_v\}}dz=0.
\end{align}
%and moreover
%\begin{align}\label{TrC}
%\lim_{t\to\infty}\E_{x,\pi}\left[e^{-\rho(t\wedge\gamma^{D^v})}v\big(X^{D^v}_{t\wedge\gamma^{D^v}},\pi_{t\wedge%\gamma^{D^v}}\big)\right]=0.
%\end{align}
Then $V=v$ on $\cO$ and $D^*:=D^{v}$ is an optimal dividend strategy.
\end{theorem}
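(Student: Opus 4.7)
My plan is to prove both assertions (the bound $v\ge V$ and the optimality of $D^{v}$) by applying a generalised Itô formula to the candidate supermartingale
\[
M^D_t:=e^{-\rho(t\wedge\gamma^D)}v(X^D_{t\wedge\gamma^D},\pi_{t\wedge\gamma^D})+\int_{0-}^{t\wedge\gamma^D}e^{-\rho s}\,dD_s,
\]
first for an arbitrary admissible $D$ and then for $D=D^{v}$. Since $v\in C^1(\cO)$ with $v_{xx},v_{x\pi}\in C(\cO)$ and $v_{\pi\pi}\in L^\infty_{loc}(\cO)$, the classical Itô formula does not directly apply, so I would invoke a Krylov-type extension (using that $\pi_t$ has an absolutely continuous law, so the exceptional Lebesgue-null set on which $v_{\pi\pi}$ may be undefined is not charged by the occupation measure of $(X^D,\pi)$) to justify the decomposition.

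For the first part, fix $D\in\cA$, split $D=D^c+\sum \Delta D$, and expand $M^D$ by Itô. The continuous-in-time part yields
\[
e^{-\rho s}\bigl[(\cL_{X,\pi}-\rho)v\bigr](X^D_s,\pi_s)\,ds\;+\;e^{-\rho s}(1-v_x)(X^D_s,\pi_s)\,dD^c_s\;+\;dN_s,
\]
where $N$ is a local martingale, and both bracketed factors are $\le 0$ by \eqref{HJB1}. The jump contribution across a dividend lump is
\[
e^{-\rho s}\!\bigl[v(X^D_{s-}-\Delta D_s,\pi_s)-v(X^D_{s-},\pi_s)+\Delta D_s\bigr]=-e^{-\rho s}\!\!\int_0^{\Delta D_s}\!\!\bigl(v_x(X^D_{s-}-z,\pi_s)-1\bigr)dz\le 0,
\]
again by \eqref{HJB1}. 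Thus $M^D$ is a local supermartingale. Localising by a sequence $\tau_n\uparrow\infty$ of stopping times exiting compact subsets of $\cO$, using the linear growth $v(x,\pi)\le cx$ together with $X^D_t\le X_t$ and $\E_{x,\pi}[\sup_{s\le t}X_s]<\infty$, I obtain uniform integrability and pass to the limit. Letting $t\to\infty$ and using $v(0,\pi)=0$ on $\{\gamma^D<\infty\}$ yields $v(x,\pi)\ge\E_{x,\pi}\bigl[\int_{0-}^{\gamma^D}e^{-\rho s}dD_s\bigr]$; supremising over $\cA$ gives $v\ge V$.

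For the optimality of $D^{v}$, I would verify that every inequality above is an equality along the trajectory $(X^{D^{v}},\pi)$. Condition \eqref{SK-0} together with $v\in C^2(\overline{\cI_v}\cap\cO)$ and continuity of $v_x$ implies $v_x\ge 1$ on $\overline{\cI_v}$, and combining with \eqref{HJB1} forces $(\cL_{X,\pi}-\rho)v=0$ on $\overline{\cI_v}\cap\cO$, so the drift term vanishes. Condition \eqref{SK-1} says $dD^{v,c}_s$ is carried by $\{(X^{D^v}_{s-},\pi_s)\notin\cI_v\}$, where $v_x=1$ by continuity, so the $(1-v_x)\,dD^{v,c}$ term vanishes. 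Condition \eqref{SK-2} states that for each jump time the integrand $v_x(X^{D^v}_{s-}-z,\pi_s)-1$ vanishes $dz$-a.e.\ on $[0,\Delta D^v_s]$, so the jump term also vanishes. Hence $M^{D^v}$ is a local martingale; the same localisation and integrability arguments give equality in expectation, proving $v(x,\pi)=\E_{x,\pi}\bigl[\int_{0-}^{\gamma^{D^v}}e^{-\rho s}dD^v_s\bigr]\le V(x,\pi)$, and combined with the first part $V=v$ and $D^*=D^{v}$ is optimal.

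The main obstacle is the justification of Itô's formula under the weak regularity at hand: $v_{\pi\pi}$ is only locally bounded, and the diffusion $(X^D,\pi)$ is degenerate because both components are driven by the same Brownian motion $W$. I would handle this by a mollification argument: approximate $v$ by $v^{(\eps)}:=v*\varphi_\eps$ on compact subsets of $\cO$, apply the classical Itô formula to $v^{(\eps)}$, then pass to the limit using continuity of $v_x,v_{xx},v_{x\pi}$ and dominated convergence (bounding the $v_{\pi\pi}$ term by its $L^\infty_{loc}$ norm on the compact visited before the localising time $\tau_n$). A secondary, routine issue is controlling the behaviour of $(X^{D^v},\pi)$ near the absorption set $\{x=0\}$ and near $\pi\in\{0,1\}$, which I would address by stopping at exit times of shrinking strips and using the growth bound $v\le cx$ together with $v(0,\pi)=0$.
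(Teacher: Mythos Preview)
Your proposal is correct and follows essentially the same route as the paper's proof: mollify $v$ on compacts, apply classical It\^o to the mollified function, localise by exit times from $K_n:=[n^{-1},n]\times[n^{-1},1-n^{-1}]$ and by $\gamma^D_\eps$, pass to the limit in $k$ using the $L^\infty_{loc}$ bound on $v_{\pi\pi}$, and then let $n\to\infty$, $\eps\to0$, $t\to\infty$; the optimality argument for $D^v$ via \eqref{SK-0}--\eqref{SK-2} is handled the same way. One small caveat: your initial suggestion of a Krylov-type estimate based on absolute continuity of the law of $\pi_t$ would not go through directly here, precisely because of the degeneracy you note (the occupation measure of $(X^D,\pi)$ is supported on a one-dimensional curve in $\cO$ for each $t$ and need not avoid Lebesgue-null sets); the mollification route you describe afterwards is the right one and is exactly what the paper does.
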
 

From now on we will denote the inaction set for problem \eqref{P2} by $\cI$, and if $V\in C^1(\cO)$ this will correspond to the set
\begin{align}
\label{inaction}
\cI:=\{(x,\pi)\in\cO\,:\,V_x(x,\pi)>1\}.
\end{align}
For future reference we also recall that if $V\in C^2(\cO)$ solves \eqref{HJB1}, then in particular we have
\begin{align}
\label{harmonic}
(\cL_{X,\pi} V-\rho V)(x,\pi)=0,\qquad (x,\pi)\in\cI.
\end{align}
 
%%%%%%%%%%%%%%%%%%%%%%%%%%%%%

\section{Stopping a 2-dimensional diffusion with reflection and creation}\label{sec:stop}

In this section we will construct an optimal stopping problem (OSP) which involves a 2-dimensional degenerate diffusion. Such diffusion is kept inside $\cO$ by reflection at $\{0\}\times(0,1)$ and it is also \emph{created} upon each new reflection, in a sense which will be mathematically clarified later. Here we will also start a detailed study of the optimal stopping region and of the value function of such OSP, which will be instrumental to solve problem \eqref{P2}. 

%One key difference with the construction in \cite{DeAE17} is that in our setup the reflection at $\{0\}\times(0,1)$ will occur at an angle, which depends on the state variable $\pi$. This feature adds to the complexity of our optimal stopping problem and, to the best of our knowledge, our study falls outside the scope of existing literature on probabilistic free boundary methods. 

\subsection{Construction of the stopping problem}\label{sec:stopping}
 
Let us assume for a moment that $V\in C^2(\overline{\cO})$ so that the boundary condition $V(0,\pi)=0$ would also imply $V_{\pi}(0,\pi)=V_{\pi\pi}(0,\pi)=0$. Then, for all $\pi\in(0,1)$ for which $(0,\pi)\in\cI$ (see \eqref{inaction}) we get from \eqref{harmonic}
\begin{align}
\label{elast-1}
\tfrac{\sigma^2}{2}V_{xx}(0,\pi)+\sigma\theta\pi(1-\pi)V_{x\pi}(0,\pi)+(\mu_0+\hat \mu \pi)V_x(0,\pi)=0.
\end{align}
Setting $u:=V_x$ we notice that $\cI=\{(x,\pi)\in\cO\,:\,u(x,\pi)>1\}$ and that $u\ge 1$ in $\cO$. Moreover, formally differentiating \eqref{harmonic} and using \eqref{elast-1} we obtain that $u$ solves
\begin{align}
\label{HJBu1}&(\cL_{X,\pi} u-\rho u)(x,\pi)=0 & (x,\pi)\in\cI\\
\label{HJBu2}&u(x,\pi)=1 & (x,\pi)\in\partial\cI\\
\label{HJBu3}&\tfrac{\sigma^2}{2}u_{x}(0,\pi)+\sigma\theta\pi(1-\pi)u_{\pi}(0,\pi) &\\
 &\qquad\qquad\:\:+(\mu_0+\hat \mu \pi)u(0,\pi)=0 & \text{for $\pi\in(0,1)$ s.t.~$(0,\pi)\in\cI$}.\nonumber
\end{align}
We claim that the variational problem \eqref{HJBu1}-\eqref{HJBu2}-\eqref{HJBu3} should be connected to the optimal stopping problem \eqref{U} given below. First we state the problem, then we give a heuristic justification of our claim and finally we prove, in several steps, that our conjecture is indeed correct.

Let $(\XX,\pp)$ be solution of the system, for $t>0$,
\begin{align}
\label{XX}&d\XX_t=(\mu_0+\hat\mu\pp_t)dt+\sigma dW_t+dA_t, & \XX_0=x\\[+3pt]
\label{pp}&d\pp_t=\theta\pp_t(1-\pp_t)(dW_t+\tfrac{2}{\sigma}dA_t),& \pp_0=\pi
\end{align}
where $(A_t)_{t\ge0}$ is an increasing continuous process, started at time zero from $A_0=0$ and such that $\P$-a.s
\begin{align}\label{SK-A}
\XX_t\ge 0\quad\text{and}\quad dA_t=\mathds{1}_{\{\XX_t=0\}}dA_t\quad\text{for all $t\ge0$.} 
\end{align}
Notably the process $(\XX,\pp)$ is a 2-dimensional degenerate diffusion which is reflected at $\{0\}\times(0,1)$ towards the interior of $\cO$, along the state-dependent unitary vector 
\[
\v(\pi) := \left ( \frac{1}{c(\pi)},\frac{\tfrac{2\theta}{\sigma} \pi(1-\pi)}{c(\pi)}\right)\quad\text{with $c(\pi):=\sqrt{1+(\tfrac{2\theta}{\sigma})^2\pi^2(1-\pi)^2}$}.
\]
Although existence of such reflected process may be deduced by standard theory (see, e.g., \cite{Bass} for a general exposition and references therein), we will not dwell here on this issue. In fact in the next section the reflected SDE \eqref{XX}--\eqref{pp} is reduced to an equivalent but simpler one (see \eqref{XX-OP}--\eqref{PH-OP} below) for which a solution can be computed explicitly -- hence implying that \eqref{XX}--\eqref{pp} admits a solution as well.

For $(x,\pi)\in\cO$, let us now consider the problem 
\begin{align}\label{U}
U(x,\pi)=\sup_{\tau}\E_{x,\pi}\left[\exp\left(\int_0^\tau\tfrac{2}{\sigma^2}(\mu_0+\hat \mu\pp_t)dA_t-\rho \tau\right)\right],
\end{align}
where the supremum is taken over all $\P_{x,\pi}$-a.s.~finite stopping times.
%(in case $\P_{x,\pi}(\tau=+\infty)>0$ we say that $\tau$ is a Markov time, see \cite[Ch.~3]{Sh}).
%$\P_{x,\pi}$-a.s.~finite stopping times (here we may also include so-called Markov times which instead may take infinite values with positive probability, see \cite[Ch.~3]{Sh}).
%\begin{align}
%\mathcal{T}:=\{\tau\,:\,\P_{x,\pi}(\tau<+\infty)=1,\:\:\text{for all $(x,\varphi)\in\cO$}\}.
%\end{align}

Associated with the above problem we also introduce the so-called continuation and stopping sets, denoted by $\cC$ and $\cS$, respectively. These are defined as
\begin{align}
\label{cC} &\cC:=\{(x,\pi)\in[0,+\infty)\times(0,1)\,:\,U(x,\pi)>1\}\\[+3pt]
\label{cS} &\cS:=\{(x,\pi)\in[0,+\infty)\times(0,1)\,:\,U(x,\pi)=1\}
\end{align}
and it is immediate to observe that, if $U=V_x$, then $\cC=\cI$ (recall \eqref{inaction}).

The heuristic that associates \eqref{U} to \eqref{HJBu1}--\eqref{HJBu3} goes as follows: suppose $u\in C^2(\overline\cO)$ is a solution of \eqref{HJBu1}--\eqref{HJBu3} and that 
\[
t\mapsto e^{\int_0^t\frac{2}{\sigma^2}(\mu_0+\hat \mu\pp_s)dA_s-\rho t}u(\XX_t,\pp_t)\quad\text{is $\P$-a.s.~a super-martingale}.
\]
Then $(\cL_{X,\pi}-\rho)u\le 0$ on $\cO$ and an application of Dynkin formula, combined with the use of \eqref{HJBu3} and $u\ge 1$, gives
\begin{align*}
u(x,\pi)\ge&\,\E_{x,\pi}\left[e^{\int_0^\tau\tfrac{2}{\sigma^2}(\mu_0+\hat \mu\pp_t)dA_t-\rho \tau}u(\XX_\tau,\pp_\tau)\right]\\
\ge& \E_{x,\pi}\left[\exp\left(\int_0^\tau\tfrac{2}{\sigma^2}(\mu_0+\hat \mu\pp_t)dA_t-\rho \tau\right)\right]
\end{align*}  
for any stopping time $\tau$. Then $u\ge U$. Moreover, the inequality above becomes a strict equality if we choose $\tau$ as the first exit time from $\cI$ and this concludes the heuristic. 

The rest of this section is devoted to the analysis of problem \eqref{U} in order to show that indeed $U=V_x$ and that $U$ solves \eqref{HJBu1}--\eqref{HJBu3}.

%%%%%%%%%%%%%%%%%%%%%%%%%%%%%

\subsection{A Girsanov transformation}\label{sec:girsanov}

It turns out that the problem may be more conveniently addressed under a different probability measure. As it is customary in problems involving the process $\pi_t$ (see, e.g.,~\cite{EL11}, \cite{K09} or \cite{PJ17}) we introduce here the analogue for $\pp_t$ of the so-called likelihood ratio process
\begin{align}
\PH_t:=\frac{\pp_t}{1-\pp_t},\quad t\ge0.
\end{align}
By direct computation it is not hard to derive the dynamic of $\PH$, for $t>0$, in the form
\begin{align}\label{PH}
\frac{d\PH_t}{\PH_t}=\theta\left(\tfrac{2}{\sigma}dA_t+dW_t+\theta\pp_t dt\right),\qquad \PH_0=\varphi:=\frac{\pi}{1-\pi}.
\end{align}
With the aim of turning $W_t+\theta\int_0^t\pp_s ds$ into a Brownian motion we follow steps as in \cite{EL11} and introduce a new probability measure $\Q$ by its Radon-Nikodym derivative
\begin{align}\label{eta}
\eta_t:=\frac{d\Q}{d\P}\Big|_{\cF_t}=\exp\left(-\int_0^t\theta\pp_sdW_s-\tfrac{1}{2}\int_0^t\theta^2\pp^2_sds\right), \quad t\in[0,T]
\end{align}
for some $T>0$. Under the new measure $\Q$ we have that
\[
W^\Q_t:=W_t+\theta\int_0^t\pp_s ds, \qquad t\in[0,T],
\]
is a Brownian motion and the dynamics of $(\XX,\PH)$ read
\begin{align}
\label{XX-OP}&d\XX_t=\mu_0dt+\sigma dW^\Q_t+dA_t, & \XX_0=x,\\[+4pt]
\label{PH-OP}&d\PH_t=\theta\PH_t(dW^\Q_t+\tfrac{2}{\sigma}dA_t),& \PH_0=\varphi.
\end{align}

One advantage of this formulation is that the process $\XX$ is decoupled from the process $\PH$ and, thanks to \eqref{SK-A}, we see that it is just a Brownian motion with drift $\mu_0$ reflected at zero. In particular this allows to compute a simple expression for $A$. Indeed $\Q_{x,\varphi}$-a.s.~on $[0,T]$ we have (see, \cite[Lemma~3.6.14]{KS})
\begin{align}\label{A}
A_t=x\vee\sup_{0\le s\le t}(-\mu_0s-\sigma W^\Q_s)-x,%\qquad t\ge 0.
\end{align}
Moreover we can express the dynamic for $\PH$~as
\begin{align}\label{PH-OP-dyn}
\PH_t=\varphi \exp\left(\theta W^\Q_t-\tfrac{\theta^2}{2}t +\tfrac{2\theta}{\sigma} A_t\right),\quad\text{$\Q_{x,\varphi}$-a.s.,}
\end{align}
where the dependence on $x$ is given explicitly by \eqref{A}.
Sometimes we will also use the notation $(\XX^x,A^x,\PH^{x,\varphi})$ to express the dependence of $(\XX,A,\PH)$ on the initial point $(x,\varphi)$.

In order to rewrite problem \eqref{U} in the new variables we introduce the process
\begin{align}\label{Z}
Z_t:=\frac{1+\PH_t}{1+\varphi},\qquad\text{$\P_{x,\varphi}$-a.s.}
\end{align}
and notice that $\P_{x,\varphi}(Z_0=1)=1$ and, under the measure $\P_{x,\varphi}$, we have 
\begin{align*}
\frac{dZ_t}{Z_t}=\theta\pp_t\left(\frac{2}{\sigma}dA_t+dW_t+\theta\pp_tdt\right),\qquad t>0.
\end{align*}
Recalling \eqref{eta} and rewriting the above SDE in terms of an exponential gives
\begin{align}\label{Z2}
Z_t=\frac{1}{\eta_t}\exp\left(\int_0^t\tfrac{2\theta}{\sigma}\pp_s dA_s\right),\quad t\in[0,T]
\end{align}
with the same $T>0$ as in \eqref{eta}.

Now, for any $\tau$ and any $(x,\varphi)$ we get
\begin{align}\label{Girs}
\E_{x,\pi}&\,\left[\exp\left(\int_0^{\tau\wedge T}\tfrac{2}{\sigma^2}(\mu_0+\hat \mu\pp_t)dA_t-\rho (\tau\wedge T)\right)\right]\\
=&\,\E_{x,\pi}\left[\exp\left(\tfrac{2\mu_0}{\sigma^2}A_{\tau\wedge T}-\rho(\tau\wedge T) \right)\exp\left(\int_0^{\tau\wedge T}\tfrac{2\theta}{\sigma}\pp_t dA_t\right)\frac{\eta_{\tau\wedge T}}{\eta_{\tau\wedge T}}\right]\nonumber\\
=&\,\E_{x,\pi}\left[\exp\left(\tfrac{2\mu_0}{\sigma^2}A_{\tau\wedge T}-\rho(\tau\wedge T) \right)Z_{\tau\wedge T}\cdot\eta_{\tau\wedge T}\right]\nonumber\\
=&(1+\varphi)^{-1}\EQ_{x,\varphi}\left[\exp\left(\tfrac{2\mu_0}{\sigma^2}A_{\tau\wedge T}-\rho(\tau\wedge T) \right)(1+\PH_{\tau\wedge T})\right].\nonumber
\end{align}
Defining, for all $(x,\varphi)\in\R_+\times\R_+$, the problems
\begin{align*}
&U(x,\pi; T):=\sup_{\tau}\E_{x,\pi}\left[\exp\left(\int_0^{\tau\wedge T}\tfrac{2}{\sigma^2}(\mu_0+\hat \mu\pp_t)dA_t-\rho (\tau\wedge T)\right)\right],\\
&U^\Q(x,\varphi; T):=\sup_{\tau}\EQ_{x,\varphi}\left[\exp\left(\tfrac{2\mu_0}{\sigma^2}A_{\tau\wedge T}-\rho(\tau\wedge T) \right)(1+\PH_{\tau\wedge T})\right],
\end{align*}
we immediately see that \eqref{Girs} implies
\begin{align}\label{eq:T}
U^\Q(x,\varphi; T)=(1+\varphi)U(x,\varphi/(1+\varphi); T).
\end{align}
We would like to extend this equality to the case $T=+\infty$ and this requires a short digression as Girsanov theorem does not directly apply.

Since we are interested in properties of the value functions, here we can define a new probability space $(\Omega',\cF',\OP)$ equipped with a Brownian motion $\OW$ and a filtration $(\cF_t')_{t\ge 0}$, and let $(\XX',\PH')$ be the unique strong solution of the SDE \eqref{XX-OP}--\eqref{PH-OP} driven by $\OW$ (instead of $W^\Q$) with a corresponding process $A'$ as in \eqref{A}. In this setting we can define the stopping problems 
\begin{align*}
\OU(x,\varphi;T):=&\sup_{\tau}\OE_{x,\varphi}\left[\exp\left(\tfrac{2\mu_0}{\sigma^2}A'_{\tau\wedge T}-\rho(\tau\wedge T) \right)(1+\PH'_{\tau\wedge T})\right],\\[+4pt]
\OU(x,\varphi):=&\sup_{\tau}\OE_{x,\varphi}\left[\exp\left(\tfrac{2\mu_0}{\sigma^2}A'_{\tau}-\rho\tau \right)(1+\PH'_{\tau})\right],
\end{align*}
where $\OE$ is the expectation under $\OP$. Now, $U^\Q(x,\varphi;T)=\OU(x,\varphi;T)$ by the equivalence in law of the process $(\XX,\PH,A,W^\Q)$, under $\Q$, and $(\XX',\PH',A',\OW)$, under $\OP$, on $[0,T]$. Further, if we show that 
\begin{align}\label{limits}
\lim_{T\to\infty}\OU(x,\varphi;T)=\OU(x,\varphi)\qquad\text{and}\qquad \lim_{T\to\infty}U(x,\varphi;T)=U(x,\varphi),
\end{align}
then combining these facts with \eqref{eq:T} we obtain
\begin{align}\label{U-OU}
\OU(x,\varphi)=&\lim_{T\to\infty}\OU(x,\varphi;T)=\lim_{T\to\infty}U^\Q(x,\varphi;T)\\
=&(1+\varphi)\lim_{T\to\infty}U(x,\varphi/(1+\varphi); T)=(1+\varphi)U(x,\varphi/(1+\varphi)).\notag
\end{align}
The proof of \eqref{limits} is the same as that of \eqref{OUn} below and we omit it here for brevity.

Finally, with a slight abuse of notation we relabel $(\XX',\PH',A',\OW)=(\XX,\PH,A,\OW)$ and $(\Omega',\cF',(\cF'_t)_{t\ge 0},\OP)=(\Omega,\cF,(\cF_t)_{t\ge 0},\OP)$, so that 
\begin{align}
\label{OU}&\OU(x,\varphi)=\sup_{\tau}\OE_{x,\varphi}\left[\exp\left(\tfrac{2\mu_0}{\sigma^2}A_{\tau}-\rho\tau \right)(1+\PH_{\tau})\right].
\end{align}
%From \eqref{eq:U'U} it follows that
%\begin{align}\label{U-OU}
%\OU(x,\varphi)=(1+\varphi)U(x,\varphi/(1+\varphi)).
%\end{align}

Problem \eqref{OU} is somewhat easier to analyse than the original \eqref{U} because the dynamics \eqref{XX-OP}-\eqref{PH-OP} for $(\XX,\PH)$, driven by $\OW$ under $\OP_{x,\varphi}$, are more explicit than the ones of $(\XX,\pp)$, driven by $W$ under $\P_{x,\pi}$ (see \eqref{XX}--\eqref{pp}). 

%{\color{red}(TDA: is there a technical issue with finiteness of $U$ in doing Girsanov? If so, it should be noted here and solved as in the following part.)}

It is clear from \eqref{U-OU} that $\cC$ and $\cS$ in \eqref{cC}--\eqref{cS} now read
\begin{align}
\label{cC2}&\cC=\{(x,\varphi)\in[0,+\infty)\times(0,+\infty)\,:\,\OU(x,\varphi)>1+\varphi\}\\[+3pt]
\label{cS2}&\cS=\{(x,\varphi)\in[0,+\infty)\times(0,+\infty)\,:\,\OU(x,\varphi)=1+\varphi\}.
\end{align}
\begin{remark}\label{rem:state-space}
The choice $\varphi=0$ corresponds to full information on the drift of \eqref{X0} (i.e.~$\mu=\mu_0$), in which case there is no dynamic for $\PH$. Since problem \eqref{P1} has a well known explicit solution in that setting, and given that $\P_{x,\varphi}(\PH_t>0)=1$ for all $t\ge0$ and any $(x,\varphi)\in[0,+\infty)\times(0,+\infty)$, we will not include $[0,+\infty)\times\{0\}$ in our state-space.
\end{remark}

\subsection{Well posedness and initial properties of the stopping problem.}\label{sec:wp}
At this point we start looking at elementary properties of problem \eqref{OU} which guarantee its well posedness. Recall the following known fact (see \cite[Sec.~3.5.C]{KS}): for $\beta>0$ and $S^{\beta,\sigma}_t:=\sup_{0\le s\le t}(-\beta s-\sigma \OW_s)$ we have 
\begin{align}\label{distr}
\OP(S^{\beta,\sigma}_{\infty}>x)=\exp(-\tfrac{2\beta}{\sigma^2}x)\quad\text{for $x>0$}.
\end{align}
For $\alpha>0$, setting $\beta=\alpha+\tfrac{\sigma^2\rho}{2\alpha}$, the use of \eqref{distr} and 
\[
\tfrac{2\alpha}{\sigma^2}\sup_{0\le s\le t}\left(-\alpha s-\sigma\OW_s\right)-\rho t\le \tfrac{2\alpha}{\sigma^2}\sup_{0\le s\le t}\left(-\beta s-\sigma\OW_s\right)
\]
give the following bound: for any stopping time $\tau$
\begin{align}\label{simple}
\OE&\,\left[e^{\tfrac{2\alpha}{\sigma^2}S^{\alpha,\sigma}_\tau-\rho \tau}\right]\le\OE\left[e^{\tfrac{2\alpha}{\sigma^2}S^{\beta,\sigma}_\tau}\right]\le\OE\left[e^{\tfrac{2\alpha}{\sigma^2}S^{\beta,\sigma}_\infty}\right]\\[+4pt]
=&\,\tfrac{2\beta}{\sigma^2}\int_0^\infty e^{\tfrac{2\alpha}{\sigma^2}x}e^{-\tfrac{2\beta}{\sigma^2}x}dx=\tfrac{2\beta}{\sigma^2}\int_0^\infty e^{-\tfrac{\rho}{\alpha}x}dx<+\infty.\nonumber
\end{align}

A great deal of standard results in optimal stopping theory rely on the assumption that 
\begin{align}\label{eq:ui}
\OE_{x,\varphi}\left[\sup_{t\ge 0}\left(e^{\tfrac{2\mu_0}{\sigma^2}A_t-\rho t}(1+\PH_{t})\right)\right]<+\infty.
\end{align}
In particular \eqref{eq:ui} would normally be used to show that  
\begin{align}\label{tau*}
\tau_*:=\inf\{t\ge0\,:\,(\XX_t,\PH_t)\notin\cC\}
\end{align}
is the minimal optimal stopping time for problem \eqref{OU}, whenever $\OP_{x,\varphi}(\tau_*<+\infty)=1$, otherwise it is the minimal optimal Markov time (see \cite{Sh}) (notice also that for problem \eqref{U} we rewrite \eqref{tau*} in terms of $(\XX,\pp)$). Moreover, \eqref{eq:ui} would also guarantee the (super)-martingale property of the discounted value process: the process $(N_t)_{t\ge 0}$ defined as
\[
N_t:=e^{\tfrac{2\mu_0}{\sigma^2}A_t-\rho t}\,\OU(\XX_t,\PH_t)
\]
satifies 
\begin{align}
\label{supermg}&\text{$(N_t)_{t\ge0}$ is a right-continuous supermartingale},\\[+3pt]
\label{mg}&\text{$(N_{t\wedge\tau_*})_{t\ge0}$ is a right-continuous martingale}.
\end{align}

Assumption \ref{eq:ui} may be fulfilled in our setting by choosing $\rho$ sufficiently large in comparison to the coefficients $(\mu_0,\mu_1,\sigma)$. In fact we notice that the process
\[
e^{\tfrac{2\mu_0}{\sigma^2}A_t-\rho t}\PH_{t}=e^{\tfrac{2\mu_1}{\sigma^2}A_t-\rho t+\theta W_t-\frac{\theta^2}{2}t}
\]
is not uniformly integrable in general. As it turns out, by following a slightly different approach we can still achieve \eqref{tau*}--\eqref{mg} but with no other restriction on $\rho$ than $\rho>0$. 
%{\color{red}Moreover, the same arguments used below allow to justify in full the procedure that linked $U$ to $\OU$ in the previous section (see Remark [??]).}
\vspace{+5pt}

For $n\ge 1$, let us denote $\zeta_n:=\inf\{t\ge0\,:\,\PH_t\ge n\}$ and consider the sequence of problems with value function 
\begin{align}\label{OUn}
\OU^n(x,\varphi):=\sup_{\zeta\le\zeta_n}\OE_{x,\varphi}\left[\exp\left(\tfrac{2\mu_0}{\sigma^2}A_\zeta-\rho\zeta \right)(1+\PH_{\zeta})\right].
\end{align}   
It is clear that such truncated problems fulfill condition \eqref{eq:ui}, since the process $(\XX,\PH)$ is stopped at $\zeta_n$. Hence
\begin{align}\label{zeta*}
\zeta^n_*:=\inf\{t\ge0\,:\,\OU^n(\XX_t,\PH_t)=1+\PH_t\}\wedge\zeta_n
\end{align}
is an optimal stopping time for problem \eqref{OUn}. Moreover, the process $(N^n_t)_{t\ge 0}$ defined as
\begin{align}\label{Sn}
N^n_t:=e^{\tfrac{2\mu_0}{\sigma^2}A_t-\rho t}\,\OU^n(\XX_t,\PH_t)
\end{align}
satisfies the analogue of conditions \eqref{supermg}--\eqref{mg} and we obtain the next useful results
\begin{proposition}\label{prop:OU1n}
The sequence $(\OU^n)_{n\ge1}$ is increasing in $n$ with 
\begin{align}\label{limn}
\lim_{n\to\infty}\OU^n(x,\varphi)=\OU(x,\varphi),\qquad\text{for all $(x,\varphi)\in[0,+\infty)\times(0,+\infty)$}.
\end{align}
Moreover, there exists a universal constant $c_1>0$ such that
\begin{align}\label{sublin}
0\le \OU^n(x,\varphi)\le \OU(x,\varphi)\le 1\!+\!c_1\varphi,\quad \text{for all $(x,\varphi)\in[0,+\infty)\!\times\!(0,+\infty)$.}
\end{align}
\end{proposition}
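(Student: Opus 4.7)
The plan is to address the three claims separately, in increasing difficulty. First, continuity of $\PH$ ensures the thresholds $\zeta_n=\inf\{t\ge 0:\PH_t\ge n\}$ are increasing in $n$; hence the admissible class for $\OU^{n}$ (stopping times dominated by $\zeta_n$) is nested in that for $\OU^{n+1}$, yielding $\OU^n\le\OU^{n+1}$, and $\OU^n\le\OU$ is trivial as the admissible class for $\OU$ is even larger.

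For the limit in \eqref{limn}, I would start from an arbitrary $\OP_{x,\varphi}$-a.s.~finite stopping time $\tau$ admissible for $\OU$ and test the truncated problem against $\tau\wedge\zeta_n$, which is admissible for $\OU^n$. Since $\PH$ has continuous paths taking finite values, $\zeta_n\uparrow +\infty$ a.s., so $\tau\wedge\zeta_n\to\tau$ a.s.; continuity of $(\XX,\PH,A)$ makes the integrand converge pointwise, and non-negativity lets Fatou's lemma deliver
\begin{align*}
\lim_{n}\OU^n(x,\varphi)\ge\liminf_n\OE_{x,\varphi}\!\left[e^{\tfrac{2\mu_0}{\sigma^2}A_{\tau\wedge\zeta_n}-\rho(\tau\wedge\zeta_n)}(1+\PH_{\tau\wedge\zeta_n})\right]\ge\OE_{x,\varphi}\!\left[e^{\tfrac{2\mu_0}{\sigma^2}A_\tau-\rho\tau}(1+\PH_\tau)\right].
\end{align*}
Taking the supremum over $\tau$ provides the required reverse inequality.

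The sublinear bound is the substantive claim. I would insert the explicit formula \eqref{PH-OP-dyn} into the gain and use the identity $\mu_0+\theta\sigma=\mu_1$ (which follows from $\theta=\hat\mu/\sigma$) to obtain the pathwise decomposition
\begin{align*}
e^{\tfrac{2\mu_0}{\sigma^2}A_t-\rho t}(1+\PH_t)=e^{\tfrac{2\mu_0}{\sigma^2}A_t-\rho t}+\varphi\exp\!\left(\theta\OW_t-\tfrac{\theta^2}{2}t-\rho t+\tfrac{2\mu_1}{\sigma^2}A_t\right).
\end{align*}
The first summand has $\OE$-value at most $1$, since its exponent is non-positive. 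For the second, $M_t:=\exp(\theta\OW_t-\theta^2 t/2)$ is a true $\OP$-martingale, so for any bounded stopping time $\tau\le T$ the Girsanov change of measure with density $M_T$ is legitimate; under the new measure, $\OW^*_t:=\OW_t-\theta t$ is a Brownian motion on $[0,T]$, and the Skorokhod representation \eqref{A} rewrites as $A_t=(\sup_{s\le t}(-\mu_1 s-\sigma\OW^*_s)-x)^+$, because $-\mu_0 s-\sigma\OW_s=-\mu_1 s-\sigma\OW^*_s$. The elementary estimate \eqref{simple}, applied with $\alpha=\mu_1$ to the Brownian motion $\OW^*$, then bounds the second expectation uniformly in $\tau\le T$ by $1+2\mu_1^2/(\sigma^2\rho)$. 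A Fatou passage along $\tau\wedge n$ extends this to every a.s.~finite $\tau$, and supping yields $\OU(x,\varphi)\le 1+c_1\varphi$ with $c_1:=1+2\mu_1^2/(\sigma^2\rho)$, which depends only on the model parameters.

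The main obstacle is the non-uniform integrability of $M_t$ over arbitrary horizons, which is precisely what obstructs a direct Girsanov change of measure at $\tau=\infty$ and motivates the truncation strategy in the first place. The two-step scheme (bounded-horizon Girsanov followed by a Fatou passage) circumvents this because the integrand is non-negative, and it crucially does not force any restriction on $\rho$ beyond $\rho>0$.
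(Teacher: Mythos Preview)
Your proof is correct and follows essentially the same route as the paper: monotonicity via nested admissible classes, the limit via Fatou along $\tau\wedge\zeta_n$, and the sublinear bound via the decomposition into the two exponentials followed by a Girsanov change of measure and the estimate \eqref{simple} with $\alpha=\mu_1$. The only minor variation is in how you justify the measure change: the paper truncates at $\zeta_n$ (so that the density $e^{\theta\OW_t-\theta^2 t/2}=\PH_t/(\varphi e^{2\theta A_t/\sigma})\le n/\varphi$ is a bounded martingale on $[0,\zeta_n]$), bounds $\OU^n$, and then passes to the limit via \eqref{limn}; you instead truncate at a deterministic horizon $T$, bound $\OU$ directly for bounded stopping times, and pass to the limit via Fatou along $\tau\wedge n$. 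Both truncations serve the same purpose and yield the same constant $c_1=1+2\mu_1^2/(\sigma^2\rho)$.
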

\begin{proof}
Clearly $\OU^n\le \OU$ for all $n$ and the sequence is increasing because the set of admissible stopping times is increasing. For any $\OP_{x,\varphi}$-a.s.~finite stopping time $\tau$, Fatou's lemma gives 
\begin{align}
\OE_{x,\varphi}&\,\left[\exp\left(\tfrac{2\mu_0}{\sigma^2}A_\tau-\rho\tau \right)(1+\PH_{\tau})\right]\\[+4pt]
\le &\,\liminf_{n\to\infty}\OE_{x,\varphi}\left[\exp\left(\tfrac{2\mu_0}{\sigma^2}A_{\tau\wedge\zeta_n}-\rho(\tau\wedge\zeta_n) \right)(1+\PH_{\tau\wedge\zeta_n})\right]\nonumber\\[+4pt]
\le &\,\liminf_{n\to\infty}\OU^n(x,\varphi).\nonumber
\end{align} 
The latter implies $\OU(x,\varphi)\le \liminf_{n\to\infty}\OU^n(x,\varphi)$ and therefore \eqref{limn}. 

Let us now analyse \eqref{sublin}. For any stopping time $\tau$, using \eqref{PH-OP-dyn} we obtain
\begin{align}\label{sublin1}
&\OE_{x,\varphi}\!\left[\exp\!\left(\tfrac{2\mu_0}{\sigma^2}A_{\tau\wedge\zeta_n}\!-\!\rho(\tau\wedge\zeta_n) \right)\!(1\!+\!\PH_{\tau\wedge\zeta_n})\right]\\[+4pt]
&=\OE_{x,\varphi}\!\left[e^{\tfrac{2\mu_0}{\sigma^2}A_{\tau\wedge\zeta_n}-\rho(\tau\wedge\zeta_n)}\right]\!+\!\varphi\OE_{x,\varphi}\!\left[e^{\tfrac{2\mu_1}{\sigma^2}A_{\tau\wedge\zeta_n}-\rho(\tau\wedge\zeta_n)}e^{\theta\OW_{\tau\wedge\zeta_n}-\frac{\theta^2}{2}(\tau\wedge\zeta_n)}\right],\nonumber
\end{align}
and we can study the two terms separately. For the first one, given that $\mu_0< 0$ then the expectation is trivially bounded above by one.
%, if $\mu_0>0$ instead we can use \eqref{simple} with $\alpha=\mu_0$ and upon noticing that $A_t\le \sup_{0\le s\le t}(-\mu_0 s-\sigma \OW_s)$, $\P_{x}$-a.s. Hence we conclude that 
%\[
%\OE_{x,\varphi}\left[e^{\tfrac{2\mu_0}{\sigma^2}A_{\tau\wedge\zeta_n}-\rho(\tau\wedge\zeta_n)}\right]\le c_1,
%\]
%for $c_1>0$ only depending on $(\mu_0,\mu_1,\sigma,\rho)$.

For the second term in \eqref{sublin1} we first change measure using $d\P^\theta=e^{\theta\OW_t-\frac{\theta^2}{2}t}d\OP$ on $\cF_t$, for $t\in [0,\zeta_n]$, and then notice that $W^\theta_t=\OW_t-\theta t$ is a Brownian motion under $\P^\theta$ for $t\in [0,\zeta_n]$, since the Radon-Nikodym derivative is a bounded martingale. This gives
\[
\OE_{x,\varphi}\left[e^{\tfrac{2\mu_1}{\sigma^2}A_{\tau\wedge\zeta_n}-\rho(\tau\wedge\zeta_n)}e^{\theta\OW_{\tau\wedge\zeta_n}-\frac{\theta^2}{2}(\tau\wedge\zeta_n)}\right]=\E^\theta_{x,\varphi}\left[e^{\tfrac{2\mu_1}{\sigma^2}A_{\tau\wedge\zeta_n}-\rho(\tau\wedge\zeta_n)}\right]\le c_1,
\]
where the final inequality uses \eqref{simple} with $\alpha=\mu_1$ (notice also that $\sup_{0\le s\le t}(-\mu_0 s-\sigma \OW_s)=\sup_{0\le s\le t}(-\mu_1 s-\sigma W^\theta_s)$) and $c_1>0$ is only depending on $(\mu_0,\mu_1,\sigma,\rho)$.

Hence, $\OU^n$ fulfils \eqref{sublin} for all $n\ge 1$ and then \eqref{limn} implies that the bound holds for $\OU$ as well. 
\end{proof}

It is also useful to state a continuity result for $\OU^n$.
\begin{proposition}\label{prop:OUn-c}
For any $n\ge 1$ we have $\OU^n\in C([0,+\infty)\times(0,+\infty))$. Moreover, there exists a universal constant $c>0$ such that for any couple of points $(x_1,\varphi_1)$ and $(x_2,\varphi_2)$ in $[0,+\infty)\times(0,+\infty)$, with $\varphi_2>\varphi_1$, we have
\begin{align}\label{LipOUn}
|\OU^n(x_1,\varphi_1)-\OU^n(x_2,\varphi_2)|\le c\big[(1+\varphi_2)|x_1-x_2|+(\varphi_2-\varphi_1)\big].
\end{align}
Finally, $\varphi\mapsto \OU^n(x,\varphi)$ is non-decreasing for all $x\in[0,+\infty)$.
\end{proposition}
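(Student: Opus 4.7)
The proof will proceed by working on a single probability space carrying $\OW$ together with explicit pathwise representations $A^x_t = (S_t - x)^+$, where $S_t := \sup_{0\le s\le t}(-\mu_0 s - \sigma\OW_s)$, and $\PH^{x,\varphi}_t = \varphi M^x_t$ with $M^x_t := \exp(\theta\OW_t - \theta^2 t/2 + \tfrac{2\theta}{\sigma}A^x_t)$, for all initial data $(x,\varphi)$ simultaneously. Two basic facts will underpin everything: (i) $A^{x_1}\ge A^{x_2}$ and $|A^{x_1}_t - A^{x_2}_t|\le |x_1-x_2|$ whenever $x_1\le x_2$; (ii) the identity $e^{\frac{2\mu_0}{\sigma^2}A^x_t}M^x_t = e^{\theta\OW_t-\theta^2 t/2}e^{\frac{2\mu_1}{\sigma^2}A^x_t}$, which combined with the Girsanov change of measure $d\P^\theta/d\OP|_{\cF_t} = e^{\theta\OW_t-\theta^2 t/2}$ and \eqref{simple} with $\alpha=\mu_1$ yields the universal bound $\OE[e^{-\rho\tau}e^{\frac{2\mu_0}{\sigma^2}A^x_\tau}M^x_\tau] = \E^\theta[e^{-\rho\tau}e^{\frac{2\mu_1}{\sigma^2}A^x_\tau}] \le c_1$ for every stopping time $\tau$ (justifying the measure change via truncation $\tau\wedge T$, $T\uparrow\infty$).

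\emph{Monotonicity and Lipschitz continuity in $\varphi$.} Fix $\varphi_1<\varphi_2$; note $\zeta_n^{x,\varphi_2}\le \zeta_n^{x,\varphi_1}$. Given $\tau_1$ near-optimal for $(x,\varphi_1)$, set $\tau_2:=\tau_1\wedge\zeta_n^{x,\varphi_2}$: on $\{\tau_1\le\zeta_n^{x,\varphi_2}\}$ the $(x,\varphi_2)$-reward at $\tau_2=\tau_1$ exceeds the $(x,\varphi_1)$-reward trivially; on $\{\tau_1>\zeta_n^{x,\varphi_2}\}$ the former equals $e^{\frac{2\mu_0}{\sigma^2}A^x_{\tau_2}-\rho\tau_2}(1+n)$ and dominates the latter because $\varphi_1 M^x_{\tau_1}\le n$ and $t\mapsto e^{\frac{2\mu_0}{\sigma^2}A^x_t-\rho t}$ is non-increasing. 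This gives monotonicity. For the Lipschitz estimate, I take $\tau_2$ near-optimal for $(x,\varphi_2)$, which is admissible for $(x,\varphi_1)$ since $\zeta_n^{x,\varphi_1}\ge\zeta_n^{x,\varphi_2}$; then $\OU^n(x,\varphi_2)-\OU^n(x,\varphi_1)\le (\varphi_2-\varphi_1)\OE[e^{\frac{2\mu_0}{\sigma^2}A^x_{\tau_2}-\rho\tau_2}M^x_{\tau_2}]+\epsilon \le c(\varphi_2-\varphi_1)+\epsilon$, and sending $\epsilon\downarrow 0$ completes the estimate in $\varphi$.

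\emph{Lipschitz continuity in $x$.} Fix $\varphi$ and $x_1<x_2$, and set $Y^{x,\varphi}_t:=e^{\frac{2\mu_0}{\sigma^2}A^x_t-\rho t}(1+\varphi M^x_t)$. Using fact (i), the identity in (ii), and $e^b-e^a\le (b-a)e^{b}$ for $b\ge a$, the pathwise bound
\[
|Y^{x_1,\varphi}_t-Y^{x_2,\varphi}_t|\le \tfrac{2|\mu_0|}{\sigma^2}|x_1-x_2|e^{-\rho t} + \tfrac{2\mu_1}{\sigma^2}\varphi|x_1-x_2|\,e^{-\rho t+\theta\OW_t-\theta^2 t/2+\frac{2\mu_1}{\sigma^2}A^{x_1}_t}
\]
follows, and hence $\OE[|Y^{x_1,\varphi}_\tau-Y^{x_2,\varphi}_\tau|]\le c(1+\varphi)|x_1-x_2|$ for every stopping time $\tau$. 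Since $\zeta_n^{x_1,\varphi}\le\zeta_n^{x_2,\varphi}$, any near-optimal $\tau_1$ for $(x_1,\varphi)$ is admissible for $(x_2,\varphi)$, so the easy direction $\OU^n(x_1,\varphi)-\OU^n(x_2,\varphi)\le c(1+\varphi)|x_1-x_2|$ follows at once.

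\emph{The main obstacle} is the reverse inequality, since a near-optimal $\tau^*$ for $(x_2,\varphi)$ may exceed $\zeta_n^{x_1,\varphi}$. I would introduce the restricted value $\OU^n_*(x_2,\varphi):=\sup_{\tau\le\zeta_n^{x_1,\varphi}}\OE[Y^{x_2,\varphi}_\tau]$ and decompose $\OU^n(x_2,\varphi)-\OU^n(x_1,\varphi)$ as $[\OU^n(x_2,\varphi)-\OU^n_*(x_2,\varphi)]+[\OU^n_*(x_2,\varphi)-\OU^n(x_1,\varphi)]$. The second bracket is $\le c(1+\varphi)|x_1-x_2|$ by the pathwise estimate (both sups range over the same admissible set). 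For the first, a Snell-envelope/Markov argument at $\sigma=\zeta_n^{x_1,\varphi}$ yields
\[
\OU^n(x_2,\varphi)-\OU^n_*(x_2,\varphi)\le \OE\!\bigl[e^{\frac{2\mu_0}{\sigma^2}A^{x_2}_{\zeta_n^{x_1,\varphi}}-\rho\zeta_n^{x_1,\varphi}}\bigl\{\OU^n(\XX^{x_2}_{\zeta_n^{x_1,\varphi}},\varphi M^{x_2}_{\zeta_n^{x_1,\varphi}})-(1+\varphi M^{x_2}_{\zeta_n^{x_1,\varphi}})\bigr\}\bigr].
\]
A crucial tighter bound $\OU^n(x',\varphi')\le 1+n$ for $\varphi'\le n$ (immediate from $\varphi'M^{x'}_{t\wedge\zeta_n'}\le n$ and the discount factor being $\le 1$) together with $\varphi M^{x_1}_{\zeta_n^{x_1,\varphi}}=n$ bounds the inner braces by $\varphi(M^{x_1}-M^{x_2})|_{\zeta_n^{x_1,\varphi}}$; the exponential form of $M^x$ gives $M^{x_1}_t-M^{x_2}_t\le C|x_1-x_2|M^{x_2}_t$ for $|x_1-x_2|\le 1$, and a final application of (ii) with Girsanov and \eqref{simple} reduces the first bracket to $\le c\varphi|x_1-x_2|$ (the case $|x_1-x_2|>1$ is handled by the trivial bound \eqref{sublin}). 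Combining the two Lipschitz estimates via the triangle inequality yields the stated bound, from which joint continuity follows.
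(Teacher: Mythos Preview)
Your proof is correct and follows the same overall strategy as the paper: pathwise coupling of $(\XX,\PH,A)$ for different initial data, the decomposition of the gain into a $\mu_0$-part and a $\mu_1$-part via the identity in your fact~(ii), and the Girsanov change of measure combined with \eqref{simple}. The monotonicity and Lipschitz estimate in $\varphi$ are essentially identical to the paper's, the latter even being phrased through the same trick of comparing $\zeta$ with $\zeta\wedge\zeta_n^{\varphi_2}$.

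Where you differ is in the treatment of the reverse inequality in $x$. The paper simply writes ``symmetric arguments allow to prove the reverse inequality'', but as you correctly identify, the situation is \emph{not} symmetric: since $\zeta_n^{x_1,\varphi}\le \zeta_n^{x_2,\varphi}$, the optimal stopping time for $(x_2,\varphi)$ need not be admissible for $(x_1,\varphi)$, so one cannot directly plug it into the pathwise comparison. Your decomposition $\OU^n(x_2,\varphi)-\OU^n(x_1,\varphi)=[\OU^n-\OU^n_*]+[\OU^n_*-\OU^n(x_1,\varphi)]$, together with the Snell-envelope identity at $\sigma=\zeta_n^{x_1,\varphi}$ and the key observation $\OU^n(\cdot,\cdot)\le 1+n=1+\varphi M^{x_1}_\sigma$ to bound the residual by $\varphi(M^{x_1}-M^{x_2})|_\sigma$, is a clean way to close this gap while keeping the constant $c$ independent of $n$. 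The paper's terse ``symmetric arguments'' presumably intends an argument in the spirit of its own \eqref{OUnc2}, but the $x$-case is genuinely more delicate because the discount factor $e^{\frac{2\mu_0}{\sigma^2}A^x_t}$ also depends on $x$; a naive adaptation picks up an unwanted factor of $n$. Your approach is therefore a genuine improvement in rigour over what is written in the paper, at the cost of a slightly longer argument.
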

\begin{proof}
Take $x_1<x_2$ and $\varphi\in(0,+\infty)$. Let $\zeta_1=\zeta^n_*(x_1,\varphi)$ be optimal for $\OU^n(x_1,\varphi)$, then by direct comparison
\begin{align*}
\OU^n&\,(x_1,\varphi)-\OU^n(x_2,\varphi)\\[+4pt]
\le&\, \OE\left[e^{\frac{2\mu_0}{\sigma^2}A^{x_1}_{\zeta_1}-\rho\zeta_1}-e^{\frac{2\mu_0}{\sigma^2}A^{x_2}_{\zeta_1}-\rho\zeta_1}\right]
+\varphi\E^\theta\left[e^{\frac{2\mu_1}{\sigma^2}A^{x_1}_{\zeta_1}-\rho\zeta_1}-e^{\frac{2\mu_1}{\sigma^2}A^{x_2}_{\zeta_1}-\rho\zeta_1}\right],\nonumber
\end{align*}  
where, as in the proof of Proposition \ref{prop:OU1n}, we have used $d\P^\theta=e^{\theta\OW_t-\frac{\theta^2}{2}t}d\OP$ to change measure. Next we use that $0\le A^{x_1}- A^{x_2}\le x_2-x_1$ and \eqref{simple} to conclude that 
\begin{align*}
&\E^\theta\left[e^{\frac{2\mu_1}{\sigma^2}A^{x_1}_{\zeta_1}-\rho\zeta_1}-e^{\frac{2\mu_1}{\sigma^2}A^{x_2}_{\zeta_1}-\rho\zeta_1}\right]\le (x_2-x_1)\E^\theta\left[e^{\frac{2\mu_1}{\sigma^2}A^{x_1}_{\zeta_1}-\rho\zeta_1}\right]\le c_1(x_2-x_1)\\[+4pt]
&\OE\left[e^{\frac{2\mu_0}{\sigma^2}A^{x_1}_{\zeta_1}-\rho\zeta_1}-e^{\frac{2\mu_0}{\sigma^2}A^{x_2}_{\zeta_1}-\rho\zeta_1}\right]\le \left|\tfrac{2\mu_0}{\sigma^2}\right|(x_2-x_1).
\end{align*}
Therefore we have $\OU^n\,(x_1,\varphi)-\OU^n(x_2,\varphi)\le c(1+\varphi)(x_2-x_1)$ for $c=c_1\vee |2\mu_0/\sigma^2|$. Symmetric arguments allow to prove the reverse inequality.

Let us now fix $x\in[0,+\infty)$ and $\varphi_1<\varphi_2$ in $(0,+\infty)$. Denote $\zeta^{\varphi_i}_n=\inf\{t\ge0\,:\,\PH^{x,\varphi_i}\ge n\}$ for $i=1,2$ and let $\zeta_i=\zeta_*^n(x,\varphi_i)$ be optimal for $\OU^n(x,\varphi_i)$. Notice first that since $\zeta_2\le \zeta_n^{\varphi_2}\le \zeta_n^{\varphi_1}$ then $\zeta_2$ is admissible for $\OU^n(x,\varphi_1)$. Then, using the same arguments as above we get
\begin{align*}%\label{OUnc1}
\OU^n(x,\varphi_2)-\OU^n(x,\varphi_1)\le (\varphi_2-\varphi_1)\E^\theta\left[e^{\frac{2\mu_1}{\sigma^2}A^{x}_{\zeta_2}-\rho\zeta_2}\right]\le c\,(\varphi_2-\varphi_1).
\end{align*}
For the reverse inequality we notice that given any stopping time $\zeta$ then $\zeta\wedge\zeta^{\varphi_2}_n$ is admissible for $\OU^n(x,\varphi_2)$. Using that $\PH^{x,\varphi_1}_{\zeta}\le n$ for $\zeta\le \zeta_n^{\varphi_1}$ and $\PH^{x,\varphi_2}_{\zeta^{\varphi_2}_n}=n$, we get
\begin{align}\label{OUnc2}
\OU^n&\,(x,\varphi_1)-\OU^n(x,\varphi_2)\\[+4pt]
\le &\,\sup_{\zeta\le \zeta_n^{\varphi_1}}\bigg\{(1+n)\E\left[\mathds{1}_{\{\zeta>\zeta^{\varphi_2}_n\}}\left(e^{\frac{2\mu_0}{\sigma^2}A_\zeta-\rho\zeta}-e^{\frac{2\mu_0}{\sigma^2}A_{\zeta^{\varphi_2}_n}-\rho\zeta^{\varphi_2}_n}\right)\right]\nonumber\\[+4pt]
&\hspace{+3pc}+(\varphi_1-\varphi_2)\E\left[\mathds{1}_{\{\zeta\le \zeta^{\varphi_2}_n\}}e^{\frac{2\mu_0}{\sigma^2}A_\zeta-\rho\zeta}\PH^{x,1}_{\zeta}\right]\bigg\}\le 0, \nonumber
%\\[+4pt]
%\,(1+n)\E\left[e^{-\rho\zeta^{\varphi_2}_n}\left|e^{\frac{2\mu_0}{\sigma^2}A_{\zeta^{\varphi_1}_n}}-e^{\frac{2\mu_0}{\sigma^2}A_{\zeta^{\varphi_2}_n}}\right|\right].\nonumber
\end{align}
where the last inequality also uses that $t\mapsto\frac{2\mu_0}{\sigma^2}A_t-\rho t$ is decreasing.

The above estimates imply \eqref{LipOUn}, and \eqref{OUnc2} also implies monotonicity in $\varphi$.
\end{proof}

We can now state some properties of $\OU$.
\begin{proposition}\label{prop:OU1}
The value function $\OU$ of \eqref{OU} enjoys the properties below:
\begin{itemize}
\item[(i)] $\OU\in C([0,+\infty)\times(0,+\infty))$ and there exists a universal constant $c>0$ such that
\begin{align}\label{Lip-OU}
|\OU(x_1,\varphi_1)-\OU(x_2,\varphi_2)|\le c\left[(1+\varphi_2)|x_2-x_1|+(\varphi_2-\varphi_1)\right]
\end{align}
for all $x_1,x_2\in[0+\infty)$ and $0<\varphi_1<\varphi_2$;
\item[(ii)] $\varphi\mapsto \OU(x,\varphi)$ is convex and non-decreasing for any $x\in\R_+$; 
%Moreover it is Lipschitz continuous {\color{red}uniformly in $x\in[0,+\infty)$};
%\item[(iii)] if $\mu_0\ge0$, $x\mapsto \OU(x,\varphi)$ is non-increasing;
\item[(iii)] we have $\lim_{\varphi \to 0}\OU(x,\varphi)=1$;
\item[(iv)] the following transversality condition holds
\begin{align}\label{tran}
\lim_{t\to\infty}\OE_{x,\varphi}\left[e^{\frac{2\mu_0}{\sigma^2}A_t-\rho t}\OU(\XX_t,\PH_t)\right]=0,\quad\text{for all $(x,\varphi)\in[0,+\infty)\times(0,+\infty)$.}
\end{align} 
\end{itemize}
\end{proposition}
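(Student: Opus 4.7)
My plan is to treat the four statements in order, leveraging (a) the monotone approximation $\OU^n\uparrow\OU$ from Proposition \ref{prop:OU1n} to transfer the regularity of the truncated problems, and (b) the explicit $\varphi$-linearity of the payoff that one reads off \eqref{PH-OP-dyn} to write $\OU$ as a supremum of affine functions of $\varphi$.

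For (i), I would notice that the Lipschitz estimate \eqref{LipOUn} for $\OU^n$ holds with a constant $c$ independent of $n$ (inspecting the proof of Proposition \ref{prop:OUn-c}). Since $\OU^n\uparrow\OU$ pointwise, \eqref{Lip-OU} follows by passing to the limit, and continuity of $\OU$ is then an immediate consequence of local Lipschitzness.

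For (ii), monotonicity in $\varphi$ is inherited from $\OU^n$ in the same way. For convexity, observe from \eqref{PH-OP-dyn} that under $\OP$ we have $\PH_t=\varphi\,Y_t$ with $Y_t:=\exp(\theta\OW_t-\tfrac{\theta^2}{2}t+\tfrac{2\theta}{\sigma}A_t)$, where $Y$ and $A$ depend only on $\OW$, not on $\varphi$. Using the identity $\tfrac{2\mu_0}{\sigma^2}+\tfrac{2\theta}{\sigma}=\tfrac{2\mu_1}{\sigma^2}$ (since $\theta\sigma=\mu_1-\mu_0$), this gives for any $(\cF_t)$-stopping time $\tau$
\begin{align*}
\OE_{x,\varphi}\big[e^{\tfrac{2\mu_0}{\sigma^2}A_\tau-\rho\tau}(1+\PH_\tau)\big]=\OE\big[e^{\tfrac{2\mu_0}{\sigma^2}A_\tau-\rho\tau}\big]+\varphi\,\OE\big[e^{\tfrac{2\mu_1}{\sigma^2}A_\tau-\rho\tau+\theta\OW_\tau-\tfrac{\theta^2}{2}\tau}\big],
\end{align*}
which is affine in $\varphi$ with non-negative slope. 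Since the stopping times are in the filtration of $\OW$ and thus independent of $\varphi$, $\OU(x,\varphi)$ is a supremum of affine functions of $\varphi$, hence convex (and non-decreasing).

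For (iii), the choice $\tau\equiv 0$ gives the trivial lower bound $\OU(x,\varphi)\ge 1+\varphi$, which together with \eqref{sublin} sandwiches $\OU(x,\varphi)\in[1+\varphi,1+c_1\varphi]$, so $\OU(x,\varphi)\to 1$ as $\varphi\to 0^+$.

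The hardest part is (iv). Using $\OU(\XX_t,\PH_t)\le 1+c_1\PH_t$ from \eqref{sublin} and the same algebraic computation as in (ii) via the change of measure $d\P^\theta/d\OP|_{\cF_t}=e^{\theta\OW_t-\tfrac{\theta^2}{2}t}$, I would obtain
\begin{align*}
\OE_{x,\varphi}\big[e^{\tfrac{2\mu_0}{\sigma^2}A_t-\rho t}\OU(\XX_t,\PH_t)\big]\le \OE\big[e^{\tfrac{2\mu_0}{\sigma^2}A_t-\rho t}\big]+c_1\varphi\,\E^\theta\big[e^{\tfrac{2\mu_1}{\sigma^2}A_t-\rho t}\big].
\end{align*}
The first term is bounded by $e^{-\rho t}\to 0$ because $\mu_0<0$ and $A_t\ge 0$. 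For the second, under $\P^\theta$ the process $W^\theta_t:=\OW_t-\theta t$ is a Brownian motion and \eqref{A} yields $A_t=x\vee\sup_{0\le s\le t}(-\mu_1 s-\sigma W^\theta_s)-x$; since $\mu_1>0$, $A_t$ converges $\P^\theta$-a.s.~to a finite limit $A_\infty$, so $e^{\tfrac{2\mu_1}{\sigma^2}A_t-\rho t}\to 0$ $\P^\theta$-a.s. The estimate around \eqref{simple} (with $\alpha=\mu_1$ and $\beta=\mu_1+\sigma^2\rho/(2\mu_1)$) supplies a $t$-independent, $\P^\theta$-integrable dominator $e^{(2\mu_1/\sigma^2)S^{\beta,\sigma}_\infty}$, so dominated convergence closes the argument. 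The main obstacle is precisely this: $e^{\tfrac{2\mu_0}{\sigma^2}A_t}$ alone is not strong enough to dominate the multiplier $\PH_t$, but the change of measure to $\P^\theta$ replaces $\mu_0$ by $\mu_1>0$, which makes the reflection non-explosive and unlocks the dominator from \eqref{simple}.
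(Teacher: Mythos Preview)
Your proof is correct and follows essentially the same route as the paper: (i) is obtained by passing to the limit in \eqref{LipOUn}, (ii) by the $\OP$-a.s.\ linearity of the payoff in $\varphi$ that comes from \eqref{PH-OP-dyn}, (iii) from the sandwich $1+\varphi\le\OU\le 1+c_1\varphi$, and (iv) from \eqref{sublin} combined with the Girsanov change to $\P^\theta$ and the estimate \eqref{simple}. The only cosmetic difference is in (iv): the paper splits $\rho t=\tfrac{\rho}{2}t+\tfrac{\rho}{2}t$ to get the explicit decay rate $e^{-\rho t/2}(1+c_1'\varphi)$, whereas you argue by dominated convergence with the $\P^\theta$-integrable majorant $e^{(2\mu_1/\sigma^2)S^{\beta,\sigma}_\infty}$; both are equivalent applications of the same bound.
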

\begin{proof}
In order to prove $(i)$ it is enough to take $n\to\infty$ in \eqref{LipOUn} and use \eqref{limn}. Let us now show $(ii)$.
%For the proof of $(ii)$ it is convenient to keep track of the initial data $(x,\varphi)$ by using $(\XX^x,\PH^{x,\varphi})$. 

Thanks to \eqref{PH-OP-dyn} we know that the map
\begin{align}\label{eq:gain1}
\varphi\mapsto \exp\left(\tfrac{2\mu_0}{\sigma^2}A^x_\tau-\rho\tau \right)(1+\PH^{x,\varphi}_{\tau})
\end{align}
is $\OP$-a.s.~linear for any stopping time $\tau$. Using this fact and the inequality $\sup(f+g)\le \sup(f)+\sup(g)$ it is not hard to verify
\begin{align}
\OU(x,\alpha\varphi_1+(1-\alpha)\varphi_2)\le \alpha\OU(x,\varphi_1)+(1-\alpha)\OU(x,\varphi_2)
\end{align}
for $\alpha\in(0,1)$, $\varphi_1,\varphi_2\in\R_+$ and each given $x\in\R_+$. 
Since the map \eqref{eq:gain1} is monotonic increasing, it also follows that $\varphi\mapsto \OU(x,\varphi)$ is increasing as claimed. (The latter could have also been deduced by monotonicity of $\varphi\mapsto \OU^n(x,\varphi)$.)

Next, we observe that $(iii)$ follows immediately by \eqref{sublin} upon noticing also that $\OU(x,\varphi)\ge 1+\varphi$. 
It only remains to prove $(iv)$. From \eqref{sublin}, and using \eqref{PH-OP-dyn} and 
\[
d\P^\theta=e^{\theta\OW_t-\frac{\theta^2}{2}t}d\OP
\]
we have
\begin{align}
\OE_{x,\varphi}\left[e^{\frac{2\mu_0}{\sigma^2}A_t-\rho t}\OU(\XX_t,\PH_t)\right]\le &\,\OE_{x}\left[e^{\frac{2\mu_0}{\sigma^2}A_t-\rho t}\right]+c_1\varphi \E^\theta_{x}\left[e^{\frac{2\mu_1}{\sigma^2}A_t-\rho t}\right]\\[+4pt]
\le &\, e^{-\frac{\rho}{2}t}+c_1\varphi e^{-\frac{\rho}{2}t}\,\E^\theta\left[e^{\frac{2\mu_1}{\sigma^2}\sup_{0\le s\le t}(-\mu_1s-\sigma W^\theta_s)-\frac{\rho}{2} t}\right]\nonumber,
\end{align}
where we recall that $W^\theta$ is $\P^\theta$-Brownian motion. Using now \eqref{simple} we can find a universal constant $c'_1>0$ such that 
\begin{align}\label{tran-b}
\OE_{x,\varphi}\left[e^{\frac{2\mu_0}{\sigma^2}A_t-\rho t}\OU(\XX_t,\PH_t)\right]\le e^{-\frac{\rho}{2}t}(1+c'_1\varphi).
\end{align}
Then \eqref{tran} follows by taking $t\to\infty$.
\end{proof}

There are several conclusions that one can draw from Proposition \ref{prop:OU1}. First we notice that $(\OU-\OU^n)_{n\ge 1}$ is a decreasing sequence of continuous functions that converges to zero, therefore Dini's theorem implies 
\begin{align}\label{unif}
\lim_{n\to\infty}\sup_{(x,\varphi)\in K}|\OU^n(x,\varphi)-\OU(x,\varphi)|=0,
\end{align}
for any compact $K\subset[0,+\infty)\times(0,+\infty)$.
Now we can use this fact and an argument inspired by \cite[Lem.~4.17]{ChDeA16} and \cite[Lem.~6.2]{ChDeA15} to prove the next lemma.
\begin{lemma}\label{lem:st-c}
The sequence $(\zeta_*^n)_{n\ge 1}$ (see \eqref{zeta*}) is increasing in $n$ and for all $(x,\varphi)\in[0,+\infty)\times(0,+\infty)$~we have
\begin{align}\label{st-c}
\OP_{x,\varphi}\left(\lim_{n\to\infty}\zeta_*^n=\tau_*\right)=1
\end{align}
with $\tau_*$ as in \eqref{tau*}.
\end{lemma}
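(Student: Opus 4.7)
The plan is to identify $\zeta_\infty:=\lim_{n\to\infty}\zeta_*^n$ with $\tau_*$ by nesting the continuation/stopping regions of the truncated problems \eqref{OUn} and sandwiching $\zeta_\infty$ between $\tau_*$ from both sides. The lower bound is the substantive step and requires upgrading the pointwise convergence $\OU^n\uparrow\OU$ to uniform convergence on the (pathwise) compact support of the trajectory.

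Introduce the regions $\cC_n:=\{(x,\varphi):\OU^n(x,\varphi)>1+\varphi\}$ and $\cS_n:=\cC_n^c$. Since $\OU^n(x,\varphi)\ge 1+\varphi$ (by taking $\tau\equiv 0$) and Proposition \ref{prop:OU1n} gives $\OU^n\le \OU^{n+1}\le \OU$, one has the nesting $\cS\subset \cS_{n+1}\subset \cS_n$. Combined with the trivial $\zeta_n\le \zeta_{n+1}$ this yields the pathwise monotonicity $\zeta_*^n\le \zeta_*^{n+1}$, so the limit $\zeta_\infty$ exists $\OP_{x,\varphi}$-a.s. For the upper bound, the inclusion $\cS\subset \cS_n$ implies that the first hit of $\cS_n$ by $(\XX,\PH)$ is dominated by $\tau_*$; since $\zeta_*^n$ is in turn bounded above by this first hit, it follows that $\zeta_*^n\le\tau_*$ for every $n$, hence $\zeta_\infty\le \tau_*$ $\OP_{x,\varphi}$-a.s.

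For the reverse inequality I would argue pathwise. Fix $\omega$ outside a null set and pick $t<\tau_*(\omega)$ (the case $\tau_*=0$ being trivial). Continuity of $\OU$ (Proposition \ref{prop:OU1}) and of the trajectory $s\mapsto(\XX_s,\PH_s)$, together with compactness of $[0,t]$, ensure that
\begin{equation*}
\delta:=\min_{s\in[0,t]}\bigl(\OU(\XX_s,\PH_s)-1-\PH_s\bigr)>0.
\end{equation*}
The explicit representation \eqref{PH-OP-dyn} shows that $\PH$ remains bounded and bounded away from $0$ on $[0,t]$, so the whole trajectory lies in a compact subset $K=K(\omega)$ of $[0,+\infty)\times(0,+\infty)$. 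The uniform convergence \eqref{unif} then provides $n_0=n_0(\omega)$ such that $\sup_K|\OU^n-\OU|<\delta/2$ for $n\ge n_0$, which forces $\OU^n(\XX_s,\PH_s)>1+\PH_s$ for every $s\in[0,t]$; that is, $(\XX,\PH)$ has not entered $\cS_n$ by time $t$. Path continuity of $\PH$ also gives $\zeta_n\uparrow\infty$, hence $\zeta_n>t$ for $n$ large. The two facts together imply $\zeta_*^n>t$ for all sufficiently large $n$, so $\zeta_\infty\ge t$; letting $t\uparrow\tau_*$ concludes.

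The main obstacle is precisely this lower bound: pointwise monotone convergence $\OU^n\uparrow\OU$ is not by itself enough to guarantee convergence of the first hitting times of the stopping regions. The leverage comes from Dini's theorem applied to the monotone sequence of continuous functions $\OU^n$ with continuous limit $\OU$ (hence \eqref{unif}), combined with the $\OP$-a.s.\ confinement of the path in a compact subset of the open half-strip $[0,+\infty)\times(0,+\infty)$ afforded by \eqref{PH-OP-dyn}. These are exactly the ingredients needed to follow the scheme of \cite[Lem.~4.17]{ChDeA16} and \cite[Lem.~6.2]{ChDeA15} indicated in the statement.
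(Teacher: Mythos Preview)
Your proof is correct and follows essentially the same route as the paper's: monotonicity and the upper bound $\zeta_*^n\le\tau_*$ come from the nesting $\cS\subset\cS_n$, and the lower bound is obtained pathwise by combining Dini-type uniform convergence \eqref{unif} on the compact image of the trajectory with the strict positivity of $\OU-(1+\varphi)$ along paths that have not yet reached $\cS$. The only cosmetic differences are that the paper phrases the lower bound as a proof by contradiction and leaves the step $\zeta_n\uparrow\infty$ implicit (it is absorbed in the compactness of $K_{\omega,\delta}$), whereas you argue directly and make this point explicit.
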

\begin{proof}
Since $\OU^n$ is increasing in $n$, it is clear that the sequence $(\zeta_*^n)_{n\ge 1}$ defined by \eqref{zeta*} is also increasing and $\zeta_*^n\le \tau_*$ for all $n\ge 1$, $\OP_{x,\varphi}$-a.s. For $(x,\varphi)\in \cS$ it is clear that \eqref{st-c} holds. For fixed $(x_0,\varphi_0)\in\cC$ we argue by contradiction and assume that 
\begin{align}
\OP_{x_0,\varphi_0}\left(\lim_{n\to\infty}\zeta_*^n<\tau_*\right)>0.
\end{align}
Letting $\Omega_0:=\{\omega\,:\,\lim_{n\to\infty}\zeta_*^n<\tau_*\}$ we pick an arbitrary $\omega\in\Omega_0$. Then there is $\delta_\omega>0$ such that $\tau_*(\omega)>\delta_\omega$. This implies that there also exists $c_\omega>0$ such that 
\begin{align}\label{st-c1}
\inf_{t\in[0,\delta_\omega]}\left(\OU(\XX_t,\PH_t)-(1+\PH_t)\right)(\omega)> c_\omega,
\end{align}
thanks to $(i)$ in Proposition \ref{prop:OU1} and because the process $t\mapsto(\XX_t,\PH_t)$ is continuous up to a null subset of $\Omega$. Then the image of $(\XX_t,\PH_t)(\omega)$ for $t\in[0,\delta_\omega]$ is a compact that we denote by $K_{\omega,\delta}$ and \eqref{st-c1} is equivalent to
\begin{align}\label{st-c2}
\inf_{(x,\varphi)\in K_{\omega,\delta}}\left(\OU(x,\varphi)-(1+\varphi)\right)> c_\omega.
\end{align}
Thanks to \eqref{unif} we can find $N_{\omega,\delta}\ge 1$ such that \eqref{st-c2} holds with $\OU^n$ instead of $\OU$, for all $n\ge N_{\omega,\delta}$. This implies 
$\lim_{n\to\infty}\zeta_*^n(\omega)\ge \delta_\omega$.

The argument may be repeated for any $\delta_\omega<\tau_*(\omega)$ and therefore $\lim_{n\to\infty}\zeta_*^n(\omega)\ge \tau_*(\omega)$, hence a contradiction with the definition of $\Omega_0$. 
\end{proof}

The above lemma implies optimality of $\tau_*$ as explained in the next proposition.
\begin{proposition}\label{prop:opt}
The stopping time $\tau_*$ in \eqref{tau*} is optimal for problem \eqref{OU} in the sense that for all $(x,\varphi)\in[0,+\infty)\times(0,+\infty)$ we have
\begin{align}\label{opt-1}
\OU(x,\varphi)=\OE_{x,\varphi}\left[e^{\frac{2\mu_0}{\sigma^2}A_{\tau_*}-\rho\tau_*}\big(1+\PH_{\tau_*}\big)\mathds{1}_{\{\tau_*<+\infty\}}\right].
\end{align}
Moreover the (super)-martingale properties \eqref{supermg}--\eqref{mg} hold.
\end{proposition}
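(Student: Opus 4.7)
The plan is to exploit the approximation scheme of Proposition \ref{prop:OU1n} and Lemma \ref{lem:st-c} together with the transversality estimate of Proposition \ref{prop:OU1}(iv). Since each truncated problem $\OU^n$ satisfies the uniform integrability condition \eqref{eq:ui} (the process is stopped by $\zeta_n$ where $\PH$ is bounded by $n$), standard optimal stopping theory yields that $\zeta^n_*$ is optimal:
\[
\OU^n(x,\varphi)=\OE_{x,\varphi}\Big[\exp\!\Big(\tfrac{2\mu_0}{\sigma^2}A_{\zeta^n_*}-\rho\zeta^n_*\Big)(1+\PH_{\zeta^n_*})\Big],
\]
together with the analogues of \eqref{supermg}--\eqref{mg} for the process $N^n$ stopped at $\zeta^n_*$.

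To obtain \eqref{opt-1} I pass to the limit in this identity. The left-hand side converges to $\OU(x,\varphi)$ by Proposition \ref{prop:OU1n}. For the right-hand side I fix an auxiliary horizon $T>0$ and split according to whether $\zeta^n_*\le T$ or not. On $\{\zeta^n_*>T\}$, the strong Markov property at time $T$ combined with the very definition of $\OU$ applied to the shifted process yields the uniform bound
\[
\OE_{x,\varphi}\Big[\mathds{1}_{\{\zeta^n_*>T\}}e^{\frac{2\mu_0}{\sigma^2}A_{\zeta^n_*}-\rho\zeta^n_*}(1+\PH_{\zeta^n_*})\Big]\le \OE_{x,\varphi}\Big[e^{\frac{2\mu_0}{\sigma^2}A_T-\rho T}\OU(\XX_T,\PH_T)\Big],
\]
whose right-hand side tends to $0$ as $T\to\infty$ by \eqref{tran}. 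On $\{\zeta^n_*\le T\}$, Lemma \ref{lem:st-c}, the continuity of paths of $(\XX,\PH)$ and continuity of $\OU$ from Proposition \ref{prop:OU1}(i) give $\OP$-a.s.~convergence of the integrand to $e^{\frac{2\mu_0}{\sigma^2}A_{\tau_*}-\rho\tau_*}(1+\PH_{\tau_*})\mathds{1}_{\{\tau_*\le T\}}$; the integrand is dominated by the integrable variable $1+\sup_{t\le T}\PH_t$ (integrability follows from \eqref{PH-OP-dyn} via a change of measure along $e^{\theta\OW_t-\frac{\theta^2}{2}t}$ and \eqref{simple}), so dominated convergence applies. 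Letting $T\to\infty$ and invoking monotone convergence on $\{\tau_*<\infty\}$ then delivers \eqref{opt-1}.

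For \eqref{supermg} I rely on a direct dynamic programming argument rather than limits of truncated supermartingales. Given any bounded stopping time $\sigma$ and $\varepsilon>0$, I select an $\varepsilon$-optimal stopping time from each state $(\XX_\sigma,\PH_\sigma)$ and concatenate with $\sigma$ via the strong Markov property of $(\XX,\PH)$; this produces an admissible stopping time for the original problem and yields
\[
\OE_{x,\varphi}\Big[e^{\frac{2\mu_0}{\sigma^2}A_\sigma-\rho\sigma}\OU(\XX_\sigma,\PH_\sigma)\Big]\le \OU(x,\varphi)+\varepsilon.
\]
Letting $\varepsilon\to 0$ and conditioning at a generic time $s\le t$ via Markov property gives $\OE[N_t\mid\cF_s]\le N_s$, while right-continuity follows from path continuity and continuity of $\OU$. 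For \eqref{mg}, note that $(N_{t\wedge\tau_*})_{t\ge0}$ is a non-negative supermartingale, hence $t\mapsto \OE_{x,\varphi}[N_{t\wedge\tau_*}]$ is non-increasing and satisfies
\[
\OU(x,\varphi)=\OE_{x,\varphi}[N_0]\ge \OE_{x,\varphi}[N_{t\wedge\tau_*}]\ge \OE_{x,\varphi}[N_{\tau_*}\mathds{1}_{\{\tau_*\le t\}}]\xrightarrow[t\to\infty]{}\OU(x,\varphi),
\]
where the last convergence uses monotone convergence and \eqref{opt-1} combined with $\OU(\XX_{\tau_*},\PH_{\tau_*})=1+\PH_{\tau_*}$ on $\{\tau_*<\infty\}$ (which holds by closedness of $\cS$ and continuity of $\OU$). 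Therefore $\OE[N_{t\wedge\tau_*}]$ is constant in $t$, and a supermartingale with constant expectation is a martingale.

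The main obstacle is precisely that \eqref{eq:ui} fails in general: the payoff process $e^{\frac{2\mu_0}{\sigma^2}A_t-\rho t}(1+\PH_t)$ is not uniformly integrable in $t$, so classical optional sampling and dominated convergence cannot be invoked on the untruncated problem. The argument therefore couples two independent truncations -- at the value level $\zeta_n$ (for which the standard theory applies) and at the time horizon $T$ (controlled by \eqref{tran}) -- tied together by Lemma \ref{lem:st-c}.
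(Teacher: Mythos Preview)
Your proof is correct but proceeds in a different order from the paper. The paper first establishes \eqref{supermg}--\eqref{mg} by passing to the limit in the known (super)martingale properties of $N^n$ (using \eqref{unif} and \eqref{st-c} together with dominated convergence), and only afterwards deduces \eqref{opt-1} from \eqref{mg} combined with the transversality condition \eqref{tran}. You instead derive \eqref{opt-1} \emph{first}, by taking limits directly in the optimality identity for $\OU^n$ via a double truncation (in $n$ and in the auxiliary horizon $T$), and then obtain \eqref{supermg} from a dynamic-programming concatenation and \eqref{mg} from the ``supermartingale with constant expectation'' trick.

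Both routes work. The paper's argument is slightly more streamlined because it leverages \eqref{unif} once and for all to pass limits inside conditional expectations, whereas your two-level truncation for \eqref{opt-1} requires an explicit integrable dominant on $[0,T]$ (which you correctly identify). Conversely, your approach to \eqref{mg} is arguably cleaner: rather than passing limits in the martingale identity for $N^n_{t\wedge\zeta^n_*}$, you deduce the martingale property from the already-established supermartingale property plus constancy of expectation, which follows immediately from \eqref{opt-1}. One minor point: your concatenation argument for \eqref{supermg} implicitly uses a measurable selection of $\varepsilon$-optimal stopping times; this is standard here since $\OU$ is continuous and $(\XX,\PH)$ is strong Markov, but the paper's limiting argument via $N^n$ sidesteps this technicality entirely.
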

\begin{proof}
We start by showing \eqref{supermg}--\eqref{mg}. Recall the process $(N^n_t)_{t\ge 0}$ defined in \eqref{Sn} and notice that \eqref{supermg}--\eqref{mg} hold for such process. Then, for any $s\ge t$ we have $\OP_{x,\varphi}$-a.s.
\begin{align*}
e^{\frac{2\mu_0}{\sigma^2}A_{t\wedge\zeta_n}-\rho (t\wedge\zeta_n)}\,\OU^n(\XX_{t\wedge\zeta_n},\PH_{t\wedge\zeta_n})\ge&\,\OE_{x,\varphi}\left[e^{\frac{2\mu_0}{\sigma^2}A_{s\wedge\zeta_n}-\rho (s\wedge\zeta_n)}\,\OU^n(\XX_{s\wedge\zeta_n},\PH_{s\wedge\zeta_n})\Big|\cF_t\right].
\end{align*}
Letting $n\to\infty$, dominated convergence and \eqref{limn} imply that \eqref{supermg} holds. Similarly we have 
\begin{align*}
&e^{\frac{2\mu_0}{\sigma^2}A_{t\wedge\zeta^n_*}-\rho (t\wedge\zeta^n_*)}\,\OU^n(\XX_{t\wedge\zeta^n_*},\PH_{t\wedge\zeta^n_*})\\[+4pt]
&\,=\OE_{x,\varphi}\left[e^{\frac{2\mu_0}{\sigma^2}A_{s\wedge\zeta^n_*}-\rho (s\wedge\zeta^n_*)}\,\OU^n(\XX_{s\wedge\zeta^n_*},\PH_{s\wedge\zeta^n_*})\Big|\cF_{t}\right],\quad\OP_{x,\varphi}-a.s.
\end{align*}
Then, taking $n\to\infty$ and using dominated convergence \eqref{unif} and \eqref{st-c} we obtain that \eqref{mg} holds too.

In order to prove \eqref{opt-1}, we notice that \eqref{mg} implies, for any $t\ge 0$ 
\begin{align}
\OU(x,\varphi)=&\,\OE_{x,\varphi}\left[e^{\frac{2\mu_0}{\sigma^2}A_{t\wedge\tau_*}-\rho (t\wedge\tau_*)}\,\OU(\XX_{t\wedge\tau_*},\PH_{t\wedge\tau_*})\right]\\[+4pt]
=&\OE_{x,\varphi}\left[e^{\frac{2\mu_0}{\sigma^2}A_{\tau_*}-\rho \tau_*}\,(1+\PH_{\tau_*})\mathds{1}_{\{\tau_*\le t\}}+e^{\frac{2\mu_0}{\sigma^2}A_{t}-\rho t}\,\OU(\XX_{t},\PH_{t})\mathds{1}_{\{\tau_*>t\}}\right]\nonumber
\end{align}
where we have used continuity of $\OU$ in the second equality. Letting $t\to\infty$, the transversality condition \eqref{tran} gives \eqref{opt-1}.
\end{proof}
%It is convenient at this point to introduce
%\begin{align}
%\label{cC-S-ph}\cC_\varphi:=\{x\in\R_+\,:\,(x,\varphi)\in\cC\}\quad\text{and}\quad\cS_\varphi:=\{x\in\R_+\,:\,(x,\varphi)\in\cS\},
%\end{align}
%for any $\varphi\in(0,+\infty)$. 
Before closing this section we illustrate consequences of Proposition \ref{prop:OU1} for the shape of the continuation and stopping sets $\cC$ and $\cS$. These are summarised in the next corollary. \begin{corollary}\label{cor:CS1}
The continuation set $\cC$ is open and the stopping set $\cS$ is closed. The continuation set is connected in the $\varphi$ variable, i.e., for all $\varphi'>\varphi$ we have
\begin{align}\label{eq:conn}
(x,\varphi)\in \cC\implies (x,\varphi')\in\cC.
\end{align}
\end{corollary}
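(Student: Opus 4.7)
The first two assertions follow instantly from the continuity established in Proposition \ref{prop:OU1}(i). The function $F(x,\varphi):=\OU(x,\varphi)-(1+\varphi)$ is continuous on the state space $[0,+\infty)\times(0,+\infty)$, and by construction $F\ge 0$ everywhere. Hence $\cC=F^{-1}((0,+\infty))$ is open in the relative topology and $\cS=F^{-1}(\{0\})$ is closed, as claimed.

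For the monotonicity property \eqref{eq:conn} I would argue by contrapositive. Fix $x\in[0,+\infty)$ and set $g(\varphi):=\OU(x,\varphi)-(1+\varphi)$. By Proposition \ref{prop:OU1}(ii), $\varphi\mapsto\OU(x,\varphi)$ is convex, and since $\varphi\mapsto 1+\varphi$ is affine, $g$ is convex on $(0,+\infty)$. Moreover $g\ge 0$ and, by Proposition \ref{prop:OU1}(iii),
\begin{equation}
\lim_{\varphi\to 0^+} g(\varphi)=0.
\end{equation}
Now suppose $(x,\varphi')\in\cS$ for some $\varphi'>0$, i.e.\ $g(\varphi')=0$, and take any $\varphi\in(0,\varphi')$. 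For $\varphi_0\in(0,\varphi)$, convexity of $g$ on $[\varphi_0,\varphi']$ together with $g(\varphi')=0$ yields
\begin{equation}
0\le g(\varphi)\le \frac{\varphi'-\varphi}{\varphi'-\varphi_0}\,g(\varphi_0)+\frac{\varphi-\varphi_0}{\varphi'-\varphi_0}\,g(\varphi')=\frac{\varphi'-\varphi}{\varphi'-\varphi_0}\,g(\varphi_0).
\end{equation}
Letting $\varphi_0\to 0^+$ and using the limit above forces $g(\varphi)=0$, so $(x,\varphi)\in\cS$. Equivalently, $(x,\varphi)\in\cC$ implies $(x,\varphi')\in\cC$ for every $\varphi'>\varphi$, which is \eqref{eq:conn}.

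There is no serious obstacle here: the corollary is a direct consequence of three ingredients already in hand (continuity, convexity in $\varphi$, and the boundary limit at $\varphi=0$). The only mild subtlety is that $\varphi=0$ is not part of the state space, which is precisely why one must recover $g(0^+)=0$ as a limit rather than a pointwise value, and then pass to the limit inside the convexity inequality as I did above.
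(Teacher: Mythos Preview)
Your proof is correct and follows essentially the same approach as the paper: openness/closedness from continuity (Proposition~\ref{prop:OU1}(i)), and the monotonicity in $\varphi$ from the same three ingredients the paper cites---convexity of $\varphi\mapsto\OU(x,\varphi)-(1+\varphi)$, its non-negativity, and the limit $\OU(x,0+)=1$. You have simply spelled out the convexity-and-limit argument that the paper leaves implicit.
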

\begin{proof}
The first statement is trivial due to $(i)$ in Proposition \ref{prop:OU1}. The second statement follows from the fact that
$\varphi\mapsto\OU(x,\varphi)-(1+\varphi)$ is convex due to $(ii)$ in Proposition \ref{prop:OU1}, it is non-negative and $(iii)$ in Proposition \ref{prop:OU1} holds. 
\end{proof}

For future frequent use we define
\begin{align}
\label{psi}\psi(x):=\sup\{\varphi\in(0,+\infty)\,:\,(x,\varphi)\in\cS\},
\end{align}
for any $x\in[0,+\infty)$, with the convention that $\sup\varnothing=0$. Clearly $\cC$ and $\psi$ are related by
\begin{align}\label{cC-psi}
\cC=\{(x,\varphi)\in[0,+\infty)\times(0,+\infty)\,:\,\varphi> \psi(x)\}
\end{align}
(see also Remark \ref{rem:state-space}).

Next we will infer monotonicity of $\psi(\cdot)$ and therefore the existence of a generalised inverse $c(\cdot)$, which is more convenient for a fuller geometric characterisation of $\cC$. This will be done in the next sections.

\subsection{A parabolic formulation}\label{sec:parab}

Since the process $(\XX,\PH)$ is driven by the same Brownian motion we can equivalently consider a 2-dimensional state dynamic in which only one component has a diffusive part. This is done according to a method similar to the one used in several papers addressing partial information, including \cite{DeAGV17, PJ17}.

Let us define a new process $(\YY_t)_{t\ge0}$ by setting, $\OP_{x,\varphi}$-a.s.~for all $t\ge0$
\begin{align}\label{YY}
\YY_t:=\frac{\sigma}{\theta}\ln(\PH_t)-\XX_t.
\end{align}
Then, letting $y:=\frac{\sigma}{\theta}\ln(\varphi)-x$, it is easy to verify that the couple $(\XX,\YY)$ evolves under $\OP_{x,y}$ according to
\begin{align}
\label{XX2}&d\XX_t=\mu_0dt+\sigma d\OW_t+dA_t, & \XX_0=x,\\[+4pt]
\label{YY2}&d\YY_t=-\tfrac{1}{2}(\mu_1+\mu_0)dt+dA_t,& \YY_0=y.
\end{align}
In order to rewrite our problem \eqref{OU} in terms of the new dynamics we set 
\begin{align}\label{OU-UU}
\UU(x,y):=\OU(x,\exp\tfrac{\theta}{\sigma}(x+y)),\qquad (x,y)\in[0,+\infty)\times\R
\end{align}
and from \eqref{OU} we obtain
\begin{align}\label{UU}
\UU(x,y)=\sup_{\tau}\OE_{x,y}\left[\exp\left(\tfrac{2\mu_0}{\sigma^2}A_\tau-\rho\tau\right)\left(1+\exp\big[\tfrac{\theta}{\sigma}(\XX_\tau+\YY_\tau)\big]\right)\right].
\end{align}
It is convenient in what follows to set
\begin{align}\label{g}
g(x,y):=1+\exp\big[\tfrac{\theta}{\sigma}(x+y)\big],\quad(x,y)\in[0,+\infty)\times\R
\end{align}
and notice that 
\[
\cC=\{(x,y)\in[0,+\infty)\times \R:\UU(x,y)>g(x,y)\}.
\]

Another formulation of the problem, which will be useful below, may be obtained by an application of Dynkin's formula (up to standard localisation arguments). Indeed we can write  
\begin{align}\label{uu}
\uu(x,y):=&\UU(x,y)-g(x,y)\nonumber\\[+4pt]
=&\sup_{\tau}\OE_{x,y}\bigg[\int_0^\tau e^{\frac{2\mu_0}{\sigma^2}A_t-\rho t}\,2\sigma^{-2}\left(\mu_0+\mu_1e^{\frac{\theta}{\sigma}\YY_t}\right)\!dA_t\\[+4pt]
&\hspace{+50pt}-\rho\int_0^\tau e^{\frac{2\mu_0}{\sigma^2}A_t-\rho t}g(\XX_t,\YY_t)dt\bigg],\nonumber
\end{align}
where we have also used that $dA_t=\mathds{1}_{\{\XX_t=0\}}dA_t$ (cf.~\eqref{SK-A}). 
Recalling from Proposition \ref{prop:OU1} that $\varphi\mapsto \OU(x,\varphi)-(1+\varphi)$ is convex and non-negative with $\OU(x,0+)=1$, it follows that the mapping is also non-decreasing. Then we have 
\begin{align}\label{uhaty}
\text{$y\mapsto \uu(x,y)$ is non-decreasing}.
\end{align}

For frequent future use we introduce the second order operator $\cL_{X,Y}$ associated to $(\XX,\YY)$. That is, for $f\in C^{1,2}([0,+\infty)\times\R)$ and $(x,y)\in[0,+\infty)\times\R$ we set
\begin{align}\label{LXY}
(\cL_{X,Y}f)(x,y):=\big(-\tfrac{1}{2}(\mu_1+\mu_0)f_y+\tfrac{\sigma^2}{2}f_{xx}+\mu_0 f_x\big)(x,y).
\end{align}

Notice that $(i)$ in Proposition \ref{prop:OU1} implies that $\UU$ and $\uu$ are both continuous on $[0,+\infty)\times\R$. Thanks to the parabolic formulation and recalling the martingale property \eqref{mg}, we can rely on standard optimal stopping theory and classical PDE results to state the next lemma (see, e.g., \cite[Sec.~2.7, Thm.~7.7]{KS2}). 
\begin{lemma}\label{lem:BVP1}
Given any open set $\mathcal{R}$ whose closure is contained in $\cC$, the function $\UU$ is the unique classical solution of the boundary value problem
\begin{align}\label{BVP1}
(\cL_{X,Y}-\rho) f=0,\:\:\:\text{in $\mathcal{R}$ and $f|_{\partial\mathcal{R}}=\UU|_{\partial\mathcal{R}}$}.
\end{align}
Hence $\UU$ is $C^{1,2}$ in $\cC\cap((0,+\infty)\times\R)$. %and $\UU$ is $C^1$ in $\cC\cap((0,+\infty)\times\R)$.
\end{lemma}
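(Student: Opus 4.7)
The plan is the standard two-step approach: first construct a classical solution of the Dirichlet problem on a small smooth subdomain by PDE theory, then identify it with $\UU$ via a martingale argument. I begin by observing that $\cL_{X,Y}$ is parabolic in disguise: the second-order part is uniformly elliptic in $x$, while $y$ plays the role of a (signed) time variable. Indeed, if $\mu_0+\mu_1 \ne 0$, the change of coordinate $t = -\tfrac{1}{2}(\mu_1+\mu_0)y$ recasts $(\cL_{X,Y}-\rho)f=0$ as a standard constant-coefficient uniformly parabolic equation on $(x,t)$; the degenerate case $\mu_0+\mu_1=0$ reduces the equation to an ODE in $x$ for each fixed $y$ and can be treated separately. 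Given $\mathcal{R}$ open with $\overline{\mathcal{R}}\subset\cC$ bounded away from $\{x=0\}$, and any point $(x_0,y_0)\in\mathcal{R}$, I pick an open smooth set $Q\ni(x_0,y_0)$ with $\overline{Q}\subset\mathcal{R}$; classical parabolic Schauder theory then delivers a unique $f\in C^{1,2}(Q)\cap C(\overline{Q})$ solving $(\cL_{X,Y}-\rho)f=0$ in $Q$ with $f=\UU$ on $\partial Q$, where continuity of the boundary data is guaranteed by Proposition \ref{prop:OU1}(i).

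Next I apply Itô's formula to $e^{-\rho t}f(\XX_t,\YY_t)$ on $[0,\tau_Q]$, with $\tau_Q:=\inf\{t\ge 0:(\XX_t,\YY_t)\notin Q\}$. Since $\overline{Q}$ is bounded away from $\{x=0\}$, the reflecting term satisfies $A_t=0$ for all $t\le \tau_Q$, so the drift in the Itô expansion is $(\cL_{X,Y}-\rho)f \equiv 0$ and, thanks to boundedness of $f$ on $\overline{Q}$, the resulting stochastic integral is a true martingale. Taking expectations,
\begin{align*}
f(x,y) \;=\; \OE_{x,y}\!\left[e^{-\rho \tau_Q}\,\UU(\XX_{\tau_Q},\YY_{\tau_Q})\right].
\end{align*}
On the other hand, since $\overline{Q}\subset\cC$ one has $\tau_Q\le\tau_*$ almost surely, so the martingale property \eqref{mg} from Proposition \ref{prop:opt}, rewritten in the $(\XX,\YY)$ variables and again using $A_t\equiv 0$ on $[0,\tau_Q]$, gives \emph{exactly the same} expression for $\UU(x,y)$. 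Hence $\UU\equiv f$ on $Q$, so $\UU\in C^{1,2}(Q)$ and satisfies $(\cL_{X,Y}-\rho)\UU=0$ there. Since $(x_0,y_0)\in\mathcal{R}$ was arbitrary, $\UU\in C^{1,2}(\mathcal{R})$ solves the PDE globally in $\mathcal{R}$, and uniqueness of the classical solution with boundary data $\UU|_{\partial\mathcal{R}}$ follows by running the same probabilistic identification with any competing candidate.

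The final assertion, $\UU\in C^{1,2}(\cC\cap((0,+\infty)\times\R))$, is obtained by covering that open set with neighbourhoods of the type used above. The main obstacle I see is really the combination of the degeneracy of $\cL_{X,Y}$ with the boundary $\{x=0\}$ where reflection along $\v(\pp)$ occurs: if $\mathcal{R}$ were allowed to touch $\{x=0\}$, the reflection process $A$ would contribute to Itô's expansion and one would have to replace the Dirichlet problem by a boundary value problem with an oblique-derivative condition on $\{x=0\}$ encoding \eqref{HJBu3}. Restricting to $\overline{\mathcal{R}}\subset\cC\cap((0,+\infty)\times\R)$ neutralises this issue and reduces the lemma to textbook parabolic theory; the behaviour of $\UU$ near $\{x=0\}$ is a separate matter to be handled elsewhere.
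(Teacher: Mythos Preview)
Your proposal is correct and is precisely the standard argument that the paper invokes by reference to \cite[Sec.~2.7, Thm.~7.7]{KS2}: construct a classical solution of the parabolic boundary value problem on a small domain away from $\{x=0\}$, then identify it with $\UU$ via It\^o's formula and the martingale property \eqref{mg}, using that $A\equiv 0$ before the process reaches $\{x=0\}$. The paper does not spell out any of these steps, so you are simply supplying what the citation covers.
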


%Notice in particular that if $\{0\}\times(y_1,y_2)\in\cC$ then we can construct an open bounded $\mathcal{R}$ with locally $C^2$ boundary $\partial\mathcal{R}$ and $\partial\mathcal{R}\cap(\{0\}\times(y_1,y_2))=\{0\}\times(y_1,y_2)$. We have $\UU=0$, hence smooth on $\{0\}\times(y_1,y_2)$, and $\UU$ is $C^{1,2}$ on $\partial\mathcal{R}\setminus (\{0\}\times(y_1,y_2))$ because it is classical solution to \eqref{BVP1} therein. Then $\UU$ is $C^{1,2}$ on $\partial\cR$ (except perhaps at $(0,y_1)$ and $(0,y_2)$). It follows, by classical PDE results (e.g.~\cite[Ch.~3, Sec.~3, Thm.~7]{Fri}) the solution of \eqref{BVP1} is $C^1$ in the closure of $\mathcal{R}$ (except perhaps at $(0,y_1)$ and $(0,y_2)$).
 
Now we turn to the analysis of the geometry of $\cC$. First we show that $\cC\neq\varnothing$. 
\begin{proposition}\label{prop:notempty}
We have $\cC\neq\varnothing$ and in particular $\{0\}\times(y_\ell,+\infty)\subset \cC$
%\begin{align}\label{eq:C0}
%\end{align}
with $y_\ell:=\tfrac{\sigma}{\theta}\ln(-\tfrac{\mu_0}{\mu_1})$.
\end{proposition}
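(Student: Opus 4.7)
My plan is to use the representation \eqref{uu} of $\uu = \UU - g$: since $\cC = \{\uu > 0\}$, for each fixed $y > y_\ell$ it suffices to exhibit a single stopping time whose payoff in \eqref{uu} evaluated at $(0,y)$ is strictly positive. I would take the deterministic time $\tau = \epsilon$ with $\epsilon > 0$ small, and show that the positive creation contribution (of order $\sqrt{\epsilon}$) dominates the negative running-cost contribution (of order $\epsilon$).

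For the creation integrand, combining \eqref{YY2} with $A_t \ge 0$ gives the deterministic lower bound $\YY_t \ge y - \tfrac{1}{2}[(\mu_0+\mu_1)\vee 0]\,\epsilon$ for $t \in [0,\epsilon]$. Since $y > y_\ell$ is equivalent to $\mu_0 + \mu_1 e^{\theta y/\sigma} > 0$, for $\epsilon$ small enough there is a deterministic constant $c_0(\epsilon) > 0$ with $\tfrac{2}{\sigma^2}(\mu_0 + \mu_1 e^{\theta \YY_t/\sigma}) \ge c_0(\epsilon)$ throughout $[0,\epsilon]$. Using $e^{\frac{2\mu_0}{\sigma^2}A_t} \ge e^{\frac{2\mu_0}{\sigma^2}A_\epsilon}$ for $t \le \epsilon$ (recall $\mu_0 < 0$) together with the direct computation $\int_0^\epsilon e^{\frac{2\mu_0}{\sigma^2}A_t}\,dA_t = \tfrac{\sigma^2}{2|\mu_0|}\bigl(1 - e^{\frac{2\mu_0}{\sigma^2}A_\epsilon}\bigr)$, the creation contribution is bounded below by
\[
c_0(\epsilon)\,e^{-\rho\epsilon}\,\tfrac{\sigma^2}{2|\mu_0|}\,\OE_{0}\bigl[1 - e^{\frac{2\mu_0}{\sigma^2}A_\epsilon}\bigr].
\]
By \eqref{A} one has $A_\epsilon \ge \sup_{0\le s\le \epsilon}(-\sigma \OW_s)$, so the reflection-principle identity $\OE\!\left[\sup_{0\le s\le \epsilon}(-\sigma \OW_s)\right] = \sigma\sqrt{2\epsilon/\pi}$, combined with the elementary inequality $1 - e^{-x}\ge x - x^2/2$ and a standard second-moment bound for $A_\epsilon$, implies that this expectation is of order $\sqrt{\epsilon}$ as $\epsilon \downarrow 0$.

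For the running cost, I would observe that $g(\XX_t,\YY_t) = 1 + \PH_t$, a direct consequence of $\YY_t = \tfrac{\sigma}{\theta}\ln\PH_t - \XX_t$. Then the sublinear bound \eqref{sublin} (applied to the deterministic stopping time $\tau = t$) yields, uniformly in $t \ge 0$,
\[
\OE_{0,\varphi}\!\left[e^{\frac{2\mu_0}{\sigma^2}A_t - \rho t}\, g(\XX_t,\YY_t)\right] \le \OU(0,\varphi) \le 1 + c_1\,\varphi, \qquad \varphi = e^{\theta y/\sigma},
\]
so the running-cost contribution is at most $\rho(1+c_1 e^{\theta y/\sigma})\,\epsilon = O(\epsilon)$.

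Combining the two bounds gives $\uu(0,y) \ge C_1(y)\sqrt{\epsilon} - C_2(y)\epsilon$, strictly positive for $\epsilon$ sufficiently small, proving $(0,y) \in \cC$ for every $y > y_\ell$. The main obstacle is controlling the running cost uniformly in $t$: since $e^{\theta\YY_t/\sigma}$ can in principle grow exponentially via the reflection term $A_t$, a naive bound on $\OE[g(\XX_t,\YY_t)]$ fails; but this is precisely what the dampening weight $e^{\frac{2\mu_0}{\sigma^2}A_t}$ compensates for, as already encapsulated in the proof of \eqref{sublin}.
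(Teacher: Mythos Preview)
Your proof is correct and follows a genuinely different (and more economical) route than the paper. The paper localises via the stopping time $\tau_\ell=\inf\{t:(\XX_t,\YY_t,A_t)\notin[0,1)\times(y_\ell+\eps,\infty)\times[0,1)\}$, bounds $g$ and the creation integrand by constants on the localised region, and then pays the price of controlling the error term $\OP_{0,y}(\tau_\ell<u)$ via Cauchy--Schwarz and explicit small-time estimates for $\sup\XX$ and $A$. You instead take the deterministic time $\tau=\epsilon$ and dispense with localisation entirely: for the creation term you exploit the \emph{deterministic} lower bound $\YY_t\ge y-\tfrac{1}{2}[(\mu_0+\mu_1)\vee 0]\,\epsilon$ (a consequence of $A_t\ge 0$ in \eqref{YY2}), while for the running cost you recycle the already-established bound \eqref{sublin} via the identity $g(\XX_t,\YY_t)=1+\PH_t$. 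The paper's route is self-contained and makes the local-time heuristic very explicit; yours is shorter and neatly illustrates that \eqref{sublin} already encodes the global integrability needed here. One cosmetic remark: the inequality $e^{\frac{2\mu_0}{\sigma^2}A_t}\ge e^{\frac{2\mu_0}{\sigma^2}A_\epsilon}$ you mention is not actually used once you compute $\int_0^\epsilon e^{\frac{2\mu_0}{\sigma^2}A_t}\,dA_t$ exactly; you may want to drop it to avoid confusion. Also, since $c_0(\epsilon)\to\tfrac{2}{\sigma^2}(\mu_0+\mu_1e^{\theta y/\sigma})>0$ as $\epsilon\downarrow 0$, it would read more cleanly to fix a single constant $c_0>0$ valid for all sufficiently small $\epsilon$.
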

\begin{proof}
Fix $\eps>0$, take $y>y_\ell+\eps$ and let 
\[
\tau_\ell:=\inf\{t\ge0\,:\,(\XX_t,\YY_t, A_t)\notin[0,1)\times(y_\ell\!+\!\eps,\infty)\times[0,1)\}.
\] 
Notice that there exists $c_{1,\eps}>0$, $c_{2,\eps}>0$ such that $\OP_{0,y}$-a.s.
\begin{align*}
g(\XX_t,\YY_t)\le c_{2,\eps}\quad\text{and}\quad \mu_0+\mu_1e^{\frac{\theta}{\sigma}\YY_t}\ge c_{1,\eps}
\end{align*}
for all $t\in[0,1\wedge\tau_\ell]$, given that $y_\ell+\eps\le \YY_{t\wedge \tau_\ell}\le y+\tfrac{1}{2}|\mu_0+\mu_1|+1$.
%$\mu_0+\mu_1e^{\frac{\theta}{\sigma}y}\ge c_{1,\eps}$ for $y\in[0,1]\times[y_\ell+\eps,+\infty)$. 
Then, recalling \eqref{uu} and that 
\[
A_t=\sup_{0\le s\le t}(-\mu_0 s-\sigma \OW_s)=S^{\mu_0,\sigma}_t,\quad\text{$\OP_{0,y}$-a.s.~for all $t\ge0$},
\]
we immediately obtain
\begin{align*}
\uu(0,y)\ge&\, \OE_{0,y}\Big[\int_0^{u\wedge\tau_\ell} e^{\frac{2\mu_0}{\sigma^2}A_t-\rho t}\frac{2}{\sigma^2}\left(\mu_0+\mu_1e^{\frac{\theta}{\sigma}\YY_t}\right)dA_t-\rho\!\int_0^{u\wedge\tau_\ell} e^{\frac{2\mu_0}{\sigma^2}A_t-\rho t}g(\XX_t,\YY_t)dt\Big]\nonumber\\
\ge& \OE_{0,y}\left[c'_{1,\eps}S^{\mu_0,\sigma}_{u\wedge\tau_\ell}-c'_{2,\eps}(u\wedge\tau_\ell)\right],
\end{align*}
for some $c'_{1,\eps}>0$, $c'_{2,\eps}>0$ and all $u\in(0,1]$. Next we obtain (cf.~also \cite[Lem.~13]{Pe17})
\begin{align}\label{sup}
\uu(0,y)\ge&\,c'_{1,\eps}\OE_{0,y}\left[\mathds{1}_{\{u\le \tau_\ell\}}S^{\mu_0,\sigma}_{u}\right]-c'_{2,\eps} u\\[+4pt]
\ge&\,c'_{1,\eps}\OE_{0,y}\left[S^{\mu_0,\sigma}_u\right]-c'_{1,\eps}\OE_{0,y}\left[\mathds{1}_{\{\tau_\ell<u\}}S^{\mu_0,\sigma}_u\right]-c'_{2,\eps}u\nonumber\\[+4pt]
\ge&\,\,c'_{1,\eps}\OE_{0,y}\left[S^{\mu_0,\sigma}_u\right]-c'_{1,\eps}\sqrt{\OP_{0,y}(\tau_\ell<u)}\sqrt{\OE_{0,y}\left[(S^{\mu_0,\sigma}_u)^2\right]}-c'_{2,\eps}u.\nonumber
\end{align}
Notice now that for each $u\ge 0$ one has 
\[
\text{Law}(\sup_{0\le s\le u}\OW_s)=\text{Law}(|\OW_u|)=\text{Law}(|\OW_1|\sqrt{u}).
\] 
Then, for some suitable $c>0$ that may vary from line to line but is independent of $u>0$, we obtain
\begin{align}\label{sup0}
\OE_{0,y}\left[(S^{\mu_0,\sigma}_u)^2\right]\le c\,\Big(u^2+\OE\left[\big(\sup_{0\le s\le u}\OW_s\big)^2\right]\Big)=c\,\Big(u^2+u\,\OE\left[|\OW_1|^2\right]\Big).
\end{align}
Moreover, from \eqref{YY2} we observe that if $\mu_0+\mu_1\le 0$ the process $\YY$ will never reach $y_\ell+\eps$, whereas if $\mu_0+\mu_1>0 $ then $\YY_t\le y_\ell\implies t\ge 2(y-y_\ell-\eps)/(\mu_0+\mu_1)=:t_\ell$. Hence, with no loss of generality we may take $u<t_\ell$ and get 
\begin{align}
\OP_{0,y}(\tau_\ell<u)=&\,\OP_{0,y}\left(\sup_{0\le s\le u}\XX_s\ge 1\:\:\text{or}\:\:A_u\ge 1\right)\notag\\
\le&\,\OP_{0,y}\left(\sup_{0\le s\le u}\XX_s\ge 1\right)+\OP_{0,y}\left(A_u\ge 1\right)\notag.
\end{align}
For the first term on the right-hand side above we have
\begin{align}
\OP_{0,y}\left(\sup_{0\le s\le u}\XX_s\ge 1\right)=&\OP\left(\sup_{0\le s\le u}\left[\sup_{0\le v\le s}\big(\mu_0(s-v)+\sigma(\OW_s-\OW_v)\big)\right]\ge 1\right)\nonumber\\
&=\,\OP\left(\sup_{0\le s\le u}(\mu_0s+\sigma\OW_s)\ge 1\right)\nonumber\\
&\le\,\OE\left[\sup_{0\le s\le u}(\mu_0s+\sigma\OW_s)^2\right]\le c\,\big(u^2+u\,\OE\left[|\OW_1|^2\right]\big),\nonumber
\end{align}
where we have used Markov's inequality in the penultimate inequality. It is easy to check that we have the same bound also for $\OP_{0,y}(A_u\ge 1)$ and therefore
\begin{align}\label{sup1}
\OP_{0,y}(\tau_\ell<u)\le 2c\,\big(u^2+u\,\OE\left[|\OW_1|^2\right]\big).
\end{align}
Finally, we also notice that since $\mu_0<0$ we have
\begin{align}\label{sup2}
\OE_{0,y}\left[S^{\mu_0,\sigma}_u\right]\ge \sigma\OE\left[\sup_{0\le s\le u}\OW_s\right]=\sigma\,\sqrt{u}\cdot\OE|\OW_1|.
\end{align}

Plugging \eqref{sup0}--\eqref{sup2} in \eqref{sup} we obtain
\begin{align*}
\uu(0,y)\ge c''_{1,\eps}\sqrt{u}-c''_{2,\eps}\left(u+u^{3/2}+u^2\right),
\end{align*}
with suitable constants $c''_{1,\eps}>0$ and $c''_{2,\eps}>0$.
Then, taking $u$ sufficiently small we obtain $\uu(0,y)>0$ as claimed.
\end{proof}

By \eqref{YY2} we notice that, for $\XX$ away from $0$ the process $\YY$ could either have a positive drift or a negative one. Interestingly, this dichotomy also produces substantially different technical difficulties. Recalling \eqref{cC-psi} we start by observing that 
\[
\varphi>\psi(x)\iff e^{(\theta/\sigma)(x+y)}>\psi(x)\iff y>\chi(x)
\]
where 
\begin{align}\label{xi}
\chi(x):=\tfrac{\sigma}{\theta}(\ln\psi(x)-x),\qquad x\in[0,+\infty).
\end{align} 
Hence we have that \eqref{cC-psi} is equivalent to
\begin{align}\label{cC-y}
\cC=\{(x,y)\in[0,+\infty)\times\R\,:\,y> \chi(x)\}.
\end{align}
Before going further it is convenient to introduce  
\begin{align}
\label{cC-S-y}\cC_y:=\{x\in\R_+\,:\,(x,y)\in\cC\}\quad\text{and}\quad\cS_y:=\{x\in\R_+\,:\,(x,y)\in\cS\},
\end{align}
for any $y\in\R$. The geometry of $\cC$ in the coordinates $(x,y)$ is explained in Proposition \ref{prop:b1} and \ref{prop:b2} below.

\begin{proposition}\label{prop:b1}
Assume $\mu_1+\mu_0\ge 0$, then there exists a unique non decreasing function $b:\R\to[0,+\infty]$ such that $\cS_y=[b(y),+\infty)$ for all $y\in\R$ (with $\cS_y=\varnothing$ if $b(y)=+\infty$).
\end{proposition}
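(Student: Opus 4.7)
The plan is to reduce the proposition to two monotonicity properties: (i) $x\mapsto\uu(x,y)$ is non-increasing for each $y$, and (ii) $y\mapsto\uu(x,y)$ is non-decreasing for each $x$. Property (ii) is exactly \eqref{uhaty}. Given (i), $\cS_y$ is upward-closed in $x$, so $\cS_y=[b(y),+\infty)$ with $b(y):=\inf\cS_y$ (with the convention $\inf\varnothing=+\infty$); uniqueness is immediate. From (ii), $y_1<y_2$ and $\uu(x,y_2)=0$ force $\uu(x,y_1)=0$, so $\cS_{y_2}\subseteq\cS_{y_1}$ and $b$ is non-decreasing.

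For $x>0$ let $\tau_0^x:=\inf\{t\ge 0:\XX^x_t=0\}$. The central step toward (i) will be to establish the dynamic-programming identity
\begin{align}\label{plan-formula}
\uu(x,y)=\max\Bigl\{0,\,\OE_{x,y}\bigl[e^{-\rho\tau_0^x}\uu(0,\YY^{x,y}_{\tau_0^x})\bigr]-\rho\,\OE_{x,y}\Bigl[\int_0^{\tau_0^x}\!e^{-\rho t}g(\XX^x_t,\YY^{x,y}_t)\,dt\Bigr]\Bigr\},
\end{align}
whose second argument inside the maximum I denote by $F(x,y)$. The derivation will rest on two observations: by the Skorokhod condition \eqref{SK-A} the local time $A^x$ vanishes on $[0,\tau_0^x)$, so the $dA$-integrand in \eqref{uu} vanishes there and $\YY^{x,y}_t=y-\tfrac{1}{2}(\mu_0+\mu_1)t$ is deterministic on that interval; and triviality of $\cF_0$ forces $\{\tau=0\}\in\{\varnothing,\Omega\}$ a.s. for any stopping time $\tau$. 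Combined with the strong Markov property at $\tau_0^x$, the non-negativity $\uu(0,\cdot)\ge 0$, and a standard DPP argument, this reduces the supremum in \eqref{uu} to the larger of the stop-now value $0$ and the continue-through-$\tau_0^x$ value $F(x,y)$. Consistency at $x=0$ is automatic since $\tau_0^0=0$.

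The monotonicity (i) will then follow from a pathwise coupling of the dynamics at $0\le x<x'$ driven by a common Brownian motion $\OW$. On $[0,\tau_0^x]$ one has $\XX^{x'}_t=\XX^x_t+(x'-x)$ and $\YY^{x',y}_t=\YY^{x,y}_t$, hence $\tau_0^{x'}\ge\tau_0^x$ pathwise. The standing assumption $\mu_0+\mu_1\ge 0$ makes $s\mapsto y-\tfrac{1}{2}(\mu_0+\mu_1)s$ non-increasing, so $\YY^{x',y}_{\tau_0^{x'}}\le\YY^{x,y}_{\tau_0^x}$; combining with (ii) applied to $\uu(0,\cdot)$ and the stronger discount $e^{-\rho\tau_0^{x'}}\le e^{-\rho\tau_0^x}$ yields
\[
e^{-\rho\tau_0^{x'}}\uu(0,\YY^{x',y}_{\tau_0^{x'}})\le e^{-\rho\tau_0^x}\uu(0,\YY^{x,y}_{\tau_0^x})\qquad \OP\text{-a.s.}
\]
For the running cost, $g(x,y)=1+e^{\theta(x+y)/\sigma}$ is increasing in $x$, so on $[0,\tau_0^x]$ one has $g(\XX^{x'}_t,\YY^{x',y}_t)\ge g(\XX^x_t,\YY^{x,y}_t)$, and the additional interval $[\tau_0^x,\tau_0^{x'}]$ contributes non-negatively. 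Taking expectations yields $F(x',y)\le F(x,y)$, and \eqref{plan-formula} then gives $\uu(x',y)\le\uu(x,y)$, proving (i).

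The main obstacle will be the rigorous justification of \eqref{plan-formula}: one must rule out that a more subtle stopping rule, exploiting the information acquired in $(0,\tau_0^x)$, improves strictly on the dichotomy \emph{stop now} versus \emph{stop only after the boundary is hit}. The essential ingredient is the strict negativity of the running cost on any event of positive probability on $\{0<\tau<\tau_0^x\}$, combined with the triviality of $\cF_0$, which together prevent any path-dependent mix from beating the two canonical candidates. Once \eqref{plan-formula} is established, the coupling step is a clean pathwise comparison, with the sign condition $\mu_0+\mu_1\ge 0$ entering exactly once, in the comparison of $\YY^{x',y}_{\tau_0^{x'}}$ with $\YY^{x,y}_{\tau_0^x}$.
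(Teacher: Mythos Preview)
There is a genuine gap: the identity $\uu(x,y)=\max\{0,F(x,y)\}$ is false in general, and your stated ``essential ingredients'' do not close it. Write $H:=e^{-\rho\tau_0^x}\uu(0,\YY_{\tau_0^x})-\rho\int_0^{\tau_0^x}e^{-\rho t}g(\XX_t,\YY_t)\,dt$; the correct DPP at $\tau_0^x$ (this is \eqref{uu2} with $\eps=0$) reads
\[
\uu(x,y)=\sup_\tau\OE\Big[\mathds{1}_{\{\tau\le\tau_0^x\}}\Big(-\rho\!\int_0^{\tau}e^{-\rho t}g\,dt\Big)+\mathds{1}_{\{\tau>\tau_0^x\}}H\Big],
\]
and this supremum does \emph{not} reduce to the two extremes $\tau=0$ and $\tau\ge\tau_0^x$ a.s. The random variable $H$ is $\cF_{\tau_0^x}$-measurable and, whenever $(0,y)\in\cC$, takes both signs with positive probability (on paths with small $\tau_0^x$ the reward term dominates; on paths with large $\tau_0^x$ the accumulated running cost does). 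The stopping time $\tau=T\,\mathds{1}_{\{\tau_0^x>T\}}+(\tau_0^x+\sigma)\,\mathds{1}_{\{\tau_0^x\le T\}}$, with $\sigma$ optimal after $\tau_0^x$, is admissible, and for suitable $T$ it yields a payoff strictly larger than $F(x,y)$: on $\{\tau_0^x>T\}$ one saves the running cost $\rho\int_T^{\tau_0^x}e^{-\rho t}g\,dt$ while forgoing only the exponentially small term $e^{-\rho\tau_0^x}\uu(0,\YY_{\tau_0^x})$. Triviality of $\cF_0$ constrains $\{\tau=0\}$ only; it says nothing about the $\cF_{\tau_0^x}$-measurable event $\{\tau>\tau_0^x\}$, which is precisely where the mixing enters.

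The paper sidesteps this by proving a weaker (but sufficient) fact: $\cS_y$ is upward-closed in $x$, with no attempt at global monotonicity of $\uu(\,\cdot\,,y)$. Given $(x,y)\in\cS$, \eqref{cC-y} forces $\{x\}\times(-\infty,y]\subset\cS$. Starting from $(x',y)$ with $x'>x$, the assumption $\mu_0+\mu_1\ge 0$ makes $\YY$ non-increasing while $\XX>0$, so the process must hit the vertical segment $\{x\}\times(-\infty,y]\subset\cS$ before reaching $\{0\}\times\R$. Thus $\OP_{x',y}(\tau_*<\tau_0)=1$, the local time $A$ never increases before $\tau_*$, and \eqref{uu} gives $\uu(x',y)\le 0$, i.e.\ $(x',y)\in\cS$. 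Monotonicity of $b$ then comes for free from \eqref{cC-y}. This argument is both shorter and avoids the DPP subtlety that undermines your route.
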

\begin{proof}
First we show that $(x,y)\in\cS\implies(x',y)\in\cS$ for all $x'\ge x$. Fix $(x,y)\in\cS$ and $x'> x$, then we know from \eqref{cC-y} that  $(-\infty,y]\times\{x\}\in\cS$. Due to \eqref{YY2} we have $\YY$ non-increasing, during excursions of $\XX$ away from zero. This implies that the process $(\XX^{x'},\YY^{x',y})$ cannot reach $x=0$ before hitting the half-line $(-\infty,y]\times\{x\}$, thus implying $\OP_{x',y}(\tau_*<\tau_0)=1$ for $\tau_0:=\inf\{t\ge0\,:\,\XX_t=0\}$. Hence \eqref{uu} gives $\uu(x',y)\le 0$ for all $x'\ge x$, as claimed.

Now, for each $y\in\R$ we can define $b(y):=\inf\{x\in[0,+\infty)\,:\,(x,y)\in\cS\}$ and therefore $\cS_y=[b(y),+\infty)$. Combining the latter with \eqref{cC-y} gives that $y\mapsto b(y)$ is non-decreasing. 
\end{proof}

Next we want to show that a result similar to Proposition \ref{prop:b1} also holds for $\mu_1+\mu_2<0$, under a mild additional condition. However, in this case we need first to compute an expression for the derivative $\UU_y$. 
\begin{lemma}\label{lem:UUy}
For all $(x,y)\in((0,+\infty)\times\R)\setminus\partial\cC$ we have
\begin{align}\label{UUy}
\UU_y(x,y)=\OE_{x,y}\left[\frac{\theta}{\sigma}\exp\left(\tfrac{2\mu_0}{\sigma^2}A_{\tau_*}-\rho\tau_*+\tfrac{\theta}{\sigma}(\XX_{\tau_*}+\YY_{\tau_*})\right)\mathds{1}_{\{\tau_*<+\infty\}}\right].
\end{align}
\end{lemma}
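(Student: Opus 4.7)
The plan is to exploit two pathwise features of the system \eqref{XX2}--\eqref{YY2}: on $(\Omega,\cF,\OP)$, the pair $(\XX^x,A^x)$ is a functional of $(x,\OW)$ alone and does not depend on $y$, while the second component satisfies the pathwise identity $\YY^{x,y+\eps}_t(\omega)=\YY^{x,y}_t(\omega)+\eps$ for every $t\ge 0$, $\OP$-a.s. Consequently, every $(\cF_t)_{t\ge 0}$-stopping time is admissible for $\UU(x,y)$ and $\UU(x,y+\eps)$ simultaneously, and for any such $\tau$ an elementary manipulation gives
\begin{align*}
\OE_{x,y+\eps}&\bigl[e^{\frac{2\mu_0}{\sigma^2}A_\tau-\rho\tau}\bigl(1+e^{\frac{\theta}{\sigma}(\XX_\tau+\YY_\tau)}\bigr)\mathds{1}_{\{\tau<+\infty\}}\bigr]\\
&=\OE_{x,y}\bigl[e^{\frac{2\mu_0}{\sigma^2}A_\tau-\rho\tau}\bigl(1+e^{\frac{\theta}{\sigma}(\XX_\tau+\YY_\tau)}\bigr)\mathds{1}_{\{\tau<+\infty\}}\bigr]+\bigl(e^{\frac{\theta}{\sigma}\eps}-1\bigr)K_\tau(x,y),
\end{align*}
where $K_\tau(x,y):=\OE_{x,y}\bigl[e^{\frac{2\mu_0}{\sigma^2}A_\tau-\rho\tau+\frac{\theta}{\sigma}(\XX_\tau+\YY_\tau)}\mathds{1}_{\{\tau<+\infty\}}\bigr]$.

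Next I would specialise to $\tau=\tau_*:=\tau_*(x,y)$, which is optimal for $\UU(x,y)$ by Proposition \ref{prop:opt} and merely admissible (hence in general sub-optimal) for $\UU(x,y+\eps)$, and subtract $\UU(x,y)$ on both sides to obtain
$$
\UU(x,y+\eps)-\UU(x,y)\ge \bigl(e^{\frac{\theta}{\sigma}\eps}-1\bigr)K(x,y),\qquad K(x,y):=K_{\tau_*}(x,y).
$$
Dividing by $\eps>0$ and letting $\eps\downarrow 0$ yields $\liminf_{\eps\downarrow 0}\eps^{-1}\bigl(\UU(x,y+\eps)-\UU(x,y)\bigr)\ge \tfrac{\theta}{\sigma}K(x,y)$; dividing by $\eps<0$ (the inequality flipping) and letting $\eps\uparrow 0$ yields $\limsup_{\eps\uparrow 0}\eps^{-1}\bigl(\UU(x,y+\eps)-\UU(x,y)\bigr)\le \tfrac{\theta}{\sigma}K(x,y)$. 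At every point where $\UU_y(x,y)$ exists, these two bounds squeeze it to $\tfrac{\theta}{\sigma}K(x,y)$, which is exactly \eqref{UUy}.

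It therefore suffices to show that $\UU_y(x,y)$ exists at every $(x,y)\in((0,+\infty)\times\R)\setminus\partial\cC$. This set decomposes as the disjoint union of $\cC$ and the interior of $\cS$. On $\cC$, Lemma \ref{lem:BVP1} gives $\UU\in C^{1,2}$, so $\UU_y$ exists (and is continuous) and the squeeze closes. On the interior of $\cS$, one has $\tau_*\equiv 0$ on a neighbourhood, hence $\UU\equiv g$ there; a direct computation gives $\UU_y=\tfrac{\theta}{\sigma}e^{\frac{\theta}{\sigma}(x+y)}$, which coincides with $\tfrac{\theta}{\sigma}K(x,y)$ since $K$ reduces to $e^{\frac{\theta}{\sigma}(x+y)}$ when $\tau_*=0$. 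The conceptual content of the argument is that, through the change of variable $z=e^{(\theta/\sigma)y}$, $\UU(x,\cdot)$ is a supremum over stopping times of affine functions of $z$, and what I have outlined is a Danskin-type envelope calculation; the pathwise parametrisation of $\YY$ in $y$ is precisely what lets one bypass any need to control $\tau_*(x,y+\eps)$ as $\eps\to 0$, which would otherwise be the main technical obstacle.
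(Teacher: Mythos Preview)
Your proof is correct and follows essentially the same route as the paper's: fix $\tau_*=\tau_*(x,y)$, use it as a sub-optimal candidate for $\UU(x,y+\eps)$, exploit the pathwise shift $\YY^{x,y+\eps}=\YY^{x,y}+\eps$ to obtain one-sided bounds on the increment, and close the squeeze via the known differentiability of $\UU$ off $\partial\cC$. The only step the paper spells out more carefully is the appearance of $\mathds{1}_{\{\tau_*<+\infty\}}$: it first truncates at $\tau_*\wedge t$, applies the (super)martingale properties \eqref{supermg}--\eqref{mg} together with the transversality condition \eqref{tran}, and then lets $t\to\infty$; you instead invoke Proposition~\ref{prop:opt} directly, which is legitimate.
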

\begin{proof}
The claim is trivial if $(x,y)\in\cS\setminus\partial\cC$ since $\OP_{x,y}(\tau_*=0)=1$ therein. Take $(x,y)\in\cC$ and let $\tau:=\tau_*(x,y)$ be optimal for $\UU(x,y)$. Then for $\eps>0$, using \eqref{supermg} and \eqref{mg}, we have
\begin{align*}
\UU&\,(x,y+\eps)-\UU(x,y)\\[+4pt]
\ge& \OE\left[\exp(\tfrac{2\mu_0}{\sigma^2}A^x_{\tau\wedge t}-\rho(\tau\wedge t)\Big(\UU(\XX^x_{\tau\wedge t},\YY^{x,y+\eps}_{\tau\wedge t})-\UU(\XX^x_{\tau\wedge t},\YY^{x,y}_{\tau\wedge t})\Big)\right]\nonumber\\[+4pt]
\ge& \OE\left[\mathds{1}_{\{\tau\le t\}}\exp(\tfrac{2\mu_0}{\sigma^2}A^x_{\tau}-\rho\tau)\Big(g(\XX^x_{\tau},\YY^{x,y+\eps}_{\tau})-g(\XX^x_{\tau},\YY^{x,y}_{\tau})\Big)\right]\nonumber\\[+4pt]
&+\OE\left[\mathds{1}_{\{\tau> t\}}\exp(\tfrac{2\mu_0}{\sigma^2}A^x_{t}-\rho t)\Big(\UU(\XX^x_{t},\YY^{x,y+\eps}_{t})-\UU(\XX^x_{t},\YY^{x,y}_{t})\Big)\right].\nonumber
\end{align*}
Recall \eqref{OU-UU}, \eqref{tran} and \eqref{simple}. Then letting $t\to\infty$ and using also dominated convergence gives 
\begin{align}\label{UUy-1}
\UU&\,(x,y+\eps)-\UU(x,y)\\[+4pt]
\ge&\,\OE\left[\exp(\tfrac{2\mu_0}{\sigma^2}A^x_{\tau}-\rho\tau)\Big(g(\XX^x_{\tau},\YY^{x,y+\eps}_{\tau})-g(\XX^x_{\tau},\YY^{x,y}_{\tau})\Big)\mathds{1}_{\{\tau<+\infty\}}\right].\nonumber
\end{align}
The same argument may be applied to obtain
\begin{align}
\label{UUy-2}\UU&\,(x,y)-\UU(x,y-\eps)\\[+4pt]
\le&\, \OE\left[\exp(\tfrac{2\mu_0}{\sigma^2}A^x_{\tau}-\rho\tau)\Big(g(\XX^x_{\tau},\YY^{x,y}_{\tau})-g(\XX^x_{\tau},\YY^{x,y-\eps}_{\tau})\Big)\mathds{1}_{\{\tau< +\infty\}}\right].\nonumber
\end{align}
We divide both expressions by $\eps$ and let $\eps\to0$. Then, recalling that $\UU\in C^{1,2}$ in $\cC$ (Lemma \ref{lem:BVP1}), noticing that $\partial_y Y^y_t=1$ for all $t\ge0$ and that $\tau$ was chosen independent of $\eps$, we obtain \eqref{UUy}.
\end{proof}
\begin{proposition}\label{prop:b2}
Assume $\mu_1+\mu_0< 0$ and $\rho\ge \tfrac{\theta}{2\sigma}|\mu_1+\mu_0|$. Then there exists a unique non decreasing function $b:\R\to[0,+\infty]$ such that $\cS_y=[b(y),+\infty)$ for all $y\in\R$ (with $\cS_y=\varnothing$ if $b(y)=+\infty$).
\end{proposition}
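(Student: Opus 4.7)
The strategy mirrors that of Proposition~\ref{prop:b1}: it is enough to prove that $(x_0,y)\in\cS$ and $x'>x_0$ imply $(x',y)\in\cS$, after which $b(y):=\inf\{x\ge 0:(x,y)\in\cS\}$ (with $\inf\varnothing=+\infty$) is well-defined, and monotonicity of $b$ follows from \eqref{cC-y} exactly as at the end of the proof of Proposition~\ref{prop:b1}.

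The pathwise argument of Proposition~\ref{prop:b1}, which relied on $\YY$ decreasing during excursions of $\XX$ away from $0$, is no longer available because under $\mu_1+\mu_0<0$ the process $\YY$ drifts \emph{upward} during excursions. To compensate I would exploit the representation \eqref{UUy} of $\UU_y$ from Lemma~\ref{lem:UUy} together with the key deterministic monotonicity furnished by the standing assumption $\rho\ge \tfrac{\theta}{2\sigma}|\mu_1+\mu_0|$. Indeed, the map $t\mapsto \tfrac{2\mu_0}{\sigma^2}A_t-\rho t+\tfrac{\theta}{\sigma}\YY_t$ has $dt$-slope $-\rho-\tfrac{\theta(\mu_1+\mu_0)}{2\sigma}\le 0$ and $dA$-coefficient $(\mu_1+\mu_0)/\sigma^2\le 0$ under this condition, so it is $\OP$-a.s.~non-increasing. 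Equivalently, $e^{\tfrac{2\mu_0}{\sigma^2}A_t-\rho t+\tfrac{\theta}{\sigma}\YY_t}\le e^{\tfrac{\theta}{\sigma}y}$ pathwise, and inserting this into \eqref{UUy} yields the global estimate
\begin{equation*}
\UU_y(x,y)\le \tfrac{\theta}{\sigma}\,e^{\tfrac{\theta}{\sigma}y}\,\OE_{x,y}\bigl[e^{\tfrac{\theta}{\sigma}\XX_{\tau_*}}\mathds{1}_{\{\tau_*<\infty\}}\bigr],
\end{equation*}
which is the quantitative input intended to replace the qualitative drift argument used in Proposition~\ref{prop:b1}.

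I would then run an analogous perturbation argument in the $x$-variable to derive a probabilistic representation of $\UU_x$ in $\cC$. Starting from the explicit form \eqref{A} of $A^x$, one has $\partial_xA^x_t=-\mathds{1}_{\{A^x_t>0\}}$ a.e., hence $\partial_x\XX^x_t=\mathds{1}_{\{A^x_t=0\}}$ and $\partial_x\YY^{x,y}_t=-\mathds{1}_{\{A^x_t>0\}}$. Coupling $(\XX^x,\YY^{x,y})$ and $(\XX^{x\pm\eps},\YY^{x\pm\eps,y})$ on the same probability space and using the two-sided envelope technique of Lemma~\ref{lem:UUy} -- justified by the uniform bounds \eqref{sublin} and \eqref{simple} together with dominated convergence -- should produce a formula for $\UU_x(x,y)$ analogous to \eqref{UUy}, involving separate contributions from $\{A_{\tau_*}=0\}$ and $\{A_{\tau_*}>0\}$. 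Combining this formula with the estimate displayed above and with the identification $\UU=g$ on $\cS$ should show that once $(x_0,y)\in\cS$ the whole horizontal ray $\{(x,y):x\ge x_0\}$ is contained in $\cS$.

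The main difficulty I foresee is precisely at this last step: unlike $\UU_y$, the derivative $\UU_x$ is not sign-definite in $\cC$ (a direct calculation of the reflection contribution under Assumption~\ref{ass:mu1} shows it carries a positive sign), so the argument cannot reduce to a pointwise monotonicity statement in $x$. The role of the condition $\rho\ge \tfrac{\theta}{2\sigma}|\mu_1+\mu_0|$ is to provide the sharp deterministic majorant for $e^{\tfrac{\theta}{\sigma}\YY_t}$ against the discount, and it is this quantitative control, combined with the optimality of $\tau_*$ via the auxiliary formulation \eqref{uu}, that I expect to close the comparison between $\UU_x$ and $g_x$ on the relevant portion of $\cC$.
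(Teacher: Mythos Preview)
Your proposal has the right ingredients but a genuine gap at the decisive step, and the paper closes it by a rather different mechanism than the one you outline.

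You correctly identify the role of the condition $\rho\ge\tfrac{\theta}{2\sigma}|\mu_1+\mu_0|$: it makes $t\mapsto \tfrac{2\mu_0}{\sigma^2}A_t-\rho t+\tfrac{\theta}{\sigma}\YY_t$ non-increasing, and hence yields a pathwise majorant for the integrand in \eqref{UUy}. You also correctly anticipate the probabilistic formula for $\UU_x$ (the paper derives it later, in Lemma~\ref{lem:UUx}). However, as you yourself note, $\UU_x$ is not sign-definite in $\cC$, and your sketch does not explain how the bound on $\UU_y$ and the formula for $\UU_x$ combine to force $(x',y)\in\cS$ whenever $(x_0,y)\in\cS$ and $x'>x_0$. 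The bound $\UU_y(x,y)\le \tfrac{\theta}{\sigma}e^{\frac{\theta}{\sigma}y}\OE_{x,y}[e^{\frac{\theta}{\sigma}\XX_{\tau_*}}]$ does not directly compare to $g_y(x,y)=\tfrac{\theta}{\sigma}e^{\frac{\theta}{\sigma}(x+y)}$, because $\XX_{\tau_*}$ is not controlled by $x$. So the ``comparison between $\UU_x$ and $g_x$'' you allude to remains unsubstantiated.

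The paper's argument avoids this difficulty by working with the PDE rather than with probabilistic representations of $\UU_x$. It proceeds by dichotomy on the sign of $\uu_x(\cdot,y)$ in $\cC$. If $\uu_x\le 0$ throughout $\cC_y$, the conclusion is immediate. If instead $\uu_x(x_0,y)>0$ at some $(x_0,y)\in\cC$, then the equation $(\cL_{X,Y}-\rho)\uu=\rho g$ gives
\[
\tfrac{\sigma^2}{2}\uu_{xx}(x_0,y)>\rho g(x_0,y)+\rho\uu(x_0,y)+\tfrac{1}{2}(\mu_0+\mu_1)\uu_y(x_0,y),
\]
and the key input is not an \emph{inequality} for $\UU_y$ but the \emph{identity}
\[
\uu_y(x,y)=\tfrac{\theta}{\sigma}\uu(x,y)+\tfrac{\theta}{\sigma}\Big(1-\OE_{x,y}\big[e^{\frac{2\mu_0}{\sigma^2}A_{\tau_*}-\rho\tau_*}\mathds{1}_{\{\tau_*<\infty\}}\big]\Big),
\]
obtained by comparing \eqref{UUy} with \eqref{opt-1}. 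Substituting this identity and using $\rho\ge\tfrac{\theta}{2\sigma}|\mu_0+\mu_1|$ yields $\uu_{xx}(x_0,y)>0$; iterating shows $\uu_x(\cdot,y)>0$ on $[x_0,\infty)$, hence $[x_0,\infty)\times\{y\}\subset\cC$. But then, by \eqref{cC-y}, $[x_0,\infty)\times[y,\infty)\subset\cC$, and since $\YY$ is increasing when $\mu_0+\mu_1<0$, one gets $\tau_*=+\infty$ from interior points, which contradicts the martingale property \eqref{mg} and transversality \eqref{tran}. Thus only $\uu_x\le 0$ is possible, and the result follows. The missing idea in your proposal is precisely this PDE bootstrap: using the equation to convert positivity of $\uu_x$ into positivity of $\uu_{xx}$, and then reaching a contradiction via transversality, rather than attempting a direct comparison of $\UU_x$ with $g_x$.
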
 
\begin{proof}
First notice that if $\cS_y=[b(y),+\infty)$ for all $y\in\R$, then $b$ is non-decreasing due to \eqref{cC-y}. Then it remains to prove existence of $b$.

Fix $y\in\R$. Then we have two possibilities:
\begin{itemize}
\item[$(i)$] $\uu_x(x,y)\le 0$ for all $x\in(0,+\infty)$ such that $(x,y)\in\cC$;
\item[$(ii)$] there exists $x_0\in(0,+\infty)$ with $(x_0,y)\in\cC$ and $\uu_x(x_0,y)> 0$.
\end{itemize}

In case $(i)$ there exists a unique point $b(y)\in[0,+\infty]$ such that $\cS_y=[b(y),+\infty)$. 
In case $(ii)$ we argue in two steps. First we show that $(ii)$ implies $[x_0,+\infty)\times\{y\}\in\cC$ and then we show that $[x_0,+\infty)\times\{y\}\in\cC$ leads to a contradiction. Hence only $(i)$ is possible, for all $y\in\R$. 
%This implies that there exists $b:\R\to[0,+\infty]$ such that $\cS_y=[b(y),+\infty)$ for all $y\in\R$. Combining the latter with the characterisation of $\cC$ in terms of $\chi$ (see \eqref{cC-y}) finally gives that $y\mapsto b(y)$ is non-decreasing. 
\vspace{+4pt}

\emph{Step 1}. ($(ii)\implies[x_0,+\infty)\times\{y\}\in\cC$). From Lemma \ref{lem:BVP1} and the definition of $\uu$ in \eqref{uu} we know that 
\begin{align}\label{Lu}
\cL_{X,Y}\uu-\rho\uu=\rho g\quad\text{in $\cC\cap((0,+\infty)\times\R)$,}
\end{align}
where we recall $g$ as in \eqref{g}.
So, in particular at $(x_0,y)$ we have $\mu_0\uu_x(x_0,y)<0$ and
\begin{align}\label{S1-0}
\tfrac{\sigma^2}{2}\uu_{xx}(x_0,y)=&\,\rho g(x_0,y)+\rho\uu(x_0,y)-\mu_0\uu_x(x_0,y)+\tfrac{1}{2}(\mu_0+\mu_1)\uu_y(x_0,y)\\[+4pt]
>&\,\rho g(x_0,y)+\rho\uu(x_0,y)+\tfrac{1}{2}(\mu_0+\mu_1)\uu_y(x_0,y).\nonumber
\end{align}

Next we use the probabilistic representation of $\UU_y$ \eqref{UUy} to find a lower bound for the right hand side of \eqref{S1-0}. In particular, by direct comparison of \eqref{UU} and \eqref{UUy} (recall also \eqref{opt-1}) we obtain
\begin{align}
\UU_y(x,y)=\frac{\theta}{\sigma}\left(\UU(x,y)-\OE_{x,y}\left[\exp\left(\tfrac{2\mu_0}{\sigma^2}A_{\tau_*}-\rho\tau_*\right)\mathds{1}_{\{\tau_*<+\infty\}}\right]\right)
\end{align}
and consequently
\begin{align}\label{S1-1}
\uu_y(x,y)=\frac{\theta}{\sigma}\uu(x,y)+\frac{\theta}{\sigma}\left(1-\OE_{x,y}\left[\exp\left(\tfrac{2\mu_0}{\sigma^2}A_{\tau_*}-\rho\tau_*\right)\mathds{1}_{\{\tau_*<+\infty\}}\right]\right).
\end{align}
Plugging \eqref{S1-1} in the right hand side of \eqref{S1-0} we immediately find
\begin{align}
\tfrac{\sigma^2}{2}\uu_{xx}(x_0,y)>&\,(\rho-\tfrac{\theta}{2\sigma}|\mu_0+\mu_1|)(\uu(x_0,y)+1)+\rho e^{\frac{\theta}{\sigma}(x_0+y)}\nonumber\\[+4pt]
&+\tfrac{\theta}{2\sigma}|\mu_0+\mu_1|\OE_{x_0,y}\left[\exp\left(\tfrac{2\mu_0}{\sigma^2}A_{\tau_*}-\rho\tau_*\right)\mathds{1}_{\{\tau_*<+\infty\}}\right]>0.
\end{align}

The latter implies that $\uu_x(\cdot,y)$ is increasing in a right-neighbourhood of $x_0$. Hence we can repeat the argument for any point in such neighbourhood and eventually we conclude that $\uu_x(\cdot,y)>0$ on $[x_0,+\infty)$. Then it must be $[x_0,+\infty)\times\{y\}\in\cC$. 
\vspace{+4pt}

\emph{Step 2}. ($[x_0,+\infty)\times\{y\}\in\cC$ is impossible). Fix $(x_0,y_0)$ such that $[x_0,+\infty)\times\{y_0\}\in\cC$. Recalling \eqref{cC-y} we then obtain $[x_0,+\infty)\times[y_0,+\infty)\in\cC$ and therefore $\OP_{x,y}(\tau_*=+\infty)=1$ for any $(x,y)\in (x_0,+\infty)\times(y_0,+\infty)$, because $\YY$ is increasing (cf.~\eqref{YY2}). Then for any such $(x,y)$, \eqref{mg} gives
\begin{align*}
\UU(x,y)=\OE_{x,y}\left[e^{\frac{2\mu_0}{\sigma^2}A_t-\rho t}\UU(\XX_t,\YY_t)\right],\qquad\text{for all $t\ge0$}.
\end{align*} 
Letting $t\to \infty$, the transversality condition \eqref{tran} gives the absurd $\UU(x,y)=0$. 
\end{proof}

Combining the above Propositions \ref{prop:b1} and \ref{prop:b2} with \eqref{cC-y} gives the next corollary.
\begin{corollary}\label{cor:chi}
Assume either $\mu_1+\mu_0\ge 0$ or $\mu_1+\mu_0< 0$ with $\rho\ge \tfrac{\theta}{2\sigma}|\mu_1+\mu_0|$. Then the map $x\mapsto\chi(x)$ is non-decreasing.
\end{corollary}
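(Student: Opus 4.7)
The plan is to observe that $\chi(\cdot)$ and $b(\cdot)$ are generalised inverses of one another, so the non-decreasing property transfers from $b$ to $\chi$ by a purely set-theoretic argument.

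First I would record the key equivalence. By definition, $\cC=\{(x,y)\in[0,+\infty)\times\R:y>\chi(x)\}$ (see \eqref{cC-y}), and under the hypotheses of the corollary Propositions \ref{prop:b1} and \ref{prop:b2} give $\cS_y=[b(y),+\infty)$, equivalently $\cC_y=[0,b(y))$ (using the convention $[0,+\infty)=[0,+\infty)$ when $b(y)=+\infty$). Combining these two descriptions of $\cC$ yields, for every $(x,y)\in[0,+\infty)\times\R$,
\begin{align}
y>\chi(x)\;\Longleftrightarrow\;(x,y)\in\cC\;\Longleftrightarrow\;x<b(y).
\end{align}

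Next I would run the short monotonicity argument. Fix $0\le x_1\le x_2$ and pick any $y>\chi(x_2)$. The equivalence above gives $x_2<b(y)$, hence $x_1\le x_2<b(y)$, and reapplying the equivalence yields $y>\chi(x_1)$. Therefore
\begin{align}
\{y\in\R:y>\chi(x_2)\}\subseteq\{y\in\R:y>\chi(x_1)\},
\end{align}
which is exactly $\chi(x_1)\le\chi(x_2)$. This proves that $\chi$ is non-decreasing.

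There is no real obstacle here: all the work is already contained in Propositions \ref{prop:b1} and \ref{prop:b2} (where monotonicity of $b$ was established via the dynamics of $\YY$, the convexity/positivity bounds on $\uu$, and, in the case $\mu_0+\mu_1<0$, the identity \eqref{S1-1} for $\uu_y$ together with the assumption $\rho\ge\tfrac{\theta}{2\sigma}|\mu_0+\mu_1|$). The corollary itself is just the observation that $\chi$ is, in the generalised sense, the inverse of $b$ in the $(x,y)$-coordinates.
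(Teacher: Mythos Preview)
Your proposal is correct and is essentially the same approach as the paper's: the paper simply states that the corollary follows by combining Propositions \ref{prop:b1} and \ref{prop:b2} with \eqref{cC-y}, and your argument is a careful unpacking of that sentence. The only minor remark is that your set-inclusion step uses just the interval structure $\cS_y=[b(y),+\infty)$ rather than the monotonicity of $b$ itself, but that structure is precisely what those propositions deliver.
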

\noindent We can say that $\chi$ is the (generalised) inverse of $b$ in a sense that will be clarified later in Section \ref{sec:refl-cr}.

\section{Fine properties of the value function and of the boundary}\label{sec:fine}

In this section we continue our study of the optimal stopping problem by proving that its value function is $C^1$ and by illustrating properties of the optimal boundary in the different coordinate systems (i.e.~$(x,\pi)$, $(x,\varphi)$ and $(x,y)$).

\subsection{Regularity of value function and optimal boundary}

Combining Propositions \ref{prop:b1} and \ref{prop:b2} we conclude that if either of the two conditions below holds:
\begin{itemize}
%\item[(i)] $\mu_0\ge 0$
\item[$(i)$] $\mu_1+\mu_0\ge 0$,
\item[$(ii)$] $\mu_1+\mu_0< 0$ and $\rho\ge \tfrac{\theta}{2\sigma}|\mu_1+\mu_0|$,
\end{itemize}
then there is a non-decreasing optimal boundary $b$ such that
\begin{align}\label{cS3}
\cS=\{(x,y)\in[0,+\infty)\times\R\,:\,x\ge b(y)\}.
\end{align}
Since we will only consider cases $(i)-(ii)$ in the rest of this paper, it is worth summarising them in a single assumption. (Recall $\theta=(\mu_1-\mu_0)/\sigma$.)
\begin{assumption}\label{ass:mu}
We assume that $(\mu_0,\mu_1,\rho,\sigma)$ fulfil one of $(i)-(ii)$ above.
\end{assumption}

\begin{proposition}\label{prop:bf}
Under Assumption \ref{ass:mu}, for all $y\in \R$ we have $0\le b(y)<+\infty$ and moreover $b\in C(\R)$.
\end{proposition}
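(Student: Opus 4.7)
The lower bound $b(y)\ge 0$ is immediate from $\cS\subseteq[0,+\infty)\times\R$. For finiteness, I fix $y\in\R$ and show $\uu(x,y)=0$ for $x$ sufficiently large. On $[0,\tau_0]$ with $\tau_0:=\inf\{t\ge 0:\XX_t=0\}$ one has $dA_t\equiv 0$, so the integrand of \eqref{uu} reduces to the negative running cost $-\rho e^{-\rho t}g(\XX_t,\YY_t)$. The dynamic programming principle at $\tau_0$ therefore gives
\[
\uu(x,y)\;=\;0\vee\Big(\OE_{x,y}\big[e^{-\rho\tau_0}\uu(0,\YY_{\tau_0})\big]-\rho\,\OE_{x,y}\Big[\int_0^{\tau_0}\!e^{-\rho t}g(\XX_t,\YY_t)\,dt\Big]\Big).
\]
Proposition \ref{prop:OU1n}, translated through \eqref{OU-UU}, yields $\uu(0,y')\le 1+c_1 e^{\theta y'/\sigma}$. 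Combined with the explicit formula $\YY_{\tau_0}=y-\tfrac{1}{2}(\mu_1+\mu_0)\tau_0$ and the Laplace transform $\OE_{x,y}[e^{-\alpha\tau_0}]=e^{\lambda_\alpha x}$ (where $\lambda_\alpha<0$ is the negative root of $\tfrac{\sigma^2}{2}\lambda^2+\mu_0\lambda-\alpha=0$), Assumption \ref{ass:mu} is exactly what is needed to keep the first term bounded as $x\to\infty$. The Girsanov shift $d\P^\theta/d\OP=\exp(\theta\OW_t-\theta^2 t/2)$ already used in Propositions \ref{prop:OU1n}--\ref{prop:OU1} shows, via $g(\XX_t,\YY_t)\ge e^{\theta(\XX_t+\YY_t)/\sigma}$, that the running cost contribution grows like $e^{\theta(x+y)/\sigma}$ as $x\to\infty$. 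Thus the RHS above becomes negative for $x$ large, which forces $\uu(x,y)=0$.

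For continuity, left-continuity of $b$ is automatic: $\cS$ is closed by Corollary \ref{cor:CS1}, so the super-level sets $[b(y),+\infty)$ are closed, and a non-decreasing function with this property is left-continuous. Right-continuity is proved by contradiction. Assuming $b(y_0^+)>b(y_0)$, pick $x_1<x_2$ inside $(b(y_0),b(y_0^+))$ and $\delta>0$ small so that $R:=(x_1,x_2)\times(y_0,y_0+\delta)\subset\cC$, while by continuity of $\uu$ (from Proposition \ref{prop:OU1}(i) and \eqref{OU-UU}) the bottom side $[x_1,x_2]\times\{y_0\}\subset\cS$ has $\uu\equiv 0$. From Lemma \ref{lem:BVP1} we have $(\cL_{X,Y}-\rho)\uu=\rho g$ in $\cC$; Dynkin's formula between $(x,y)\in R$ and the first exit $\tau_R$ then gives
\[
\uu(x,y)=\OE_{x,y}\big[e^{-\rho\tau_R}\uu(\XX_{\tau_R},\YY_{\tau_R})\big]-\rho\,\OE_{x,y}\Big[\int_0^{\tau_R}\!e^{-\rho t}g(\XX_t,\YY_t)\,dt\Big].
\]
In case $(i)$ with $\mu_0+\mu_1>0$, inside $R$ we have $\YY_t=y-\tfrac{1}{2}(\mu_0+\mu_1)t$ deterministic, so $\YY$ reaches $y_0$ at time $t_\delta:=2(y-y_0)/(\mu_0+\mu_1)$; a standard Gaussian estimate bounds $\OP(\tau_{\XX}\le t_\delta)$ by $O(\exp(-c/t_\delta))$ where $\tau_{\XX}$ is the exit time of $\XX$ from $(x_1,x_2)$. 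The first expectation is therefore $O(\exp(-c/t_\delta))$ (the bottom contribution vanishes), while the running cost term is $\Theta(t_\delta)$. Picking $y-y_0$ small gives $\uu(x,y)<0$, contradicting $\uu\ge 0$. Case $(ii)$ is handled analogously by a local version of the transversality argument used in Step 2 of Proposition \ref{prop:b2}, exploiting now the positive $\YY$-drift.

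The main obstacle is the right-continuity step: the degenerate structure of $\cL_{X,Y}$ (second order only in $x$) prevents a direct appeal to Hopf-type lemmas, so one is forced to rely on the explicit $\YY$-dynamics and on delicate Gaussian control of $\tau_R$. The subcase $\mu_0+\mu_1=0$ of case $(i)$ is particularly awkward, because between reflections $\YY$ is frozen and cannot approach $\{y=y_0\}$ from above; here one would need an additional argument exploiting the increments of $A$ at $\{\XX=0\}$ to force $\uu$ below zero near the putative jump.
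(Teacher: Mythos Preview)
Your argument has two genuine gaps.

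\textbf{Finiteness.} The identity
\[
\uu(x,y)=0\vee\Big(\OE_{x,y}\big[e^{-\rho\tau_0}\uu(0,\YY_{\tau_0})\big]-\rho\,\OE_{x,y}\Big[\int_0^{\tau_0}e^{-\rho t}g(\XX_t,\YY_t)\,dt\Big]\Big)
\]
is not what the dynamic programming principle delivers. The DPP at $\tau_0$ still carries a supremum over stopping times; the optimal $\tau_*$ may stop before $\tau_0$ on some paths and continue past it on others, so one only gets
\[
\uu(x,y)\le\OE_{x,y}\Big[\Big(e^{-\rho\tau_0}\uu(0,\YY_{\tau_0})-\rho\int_0^{\tau_0}e^{-\rho t}g\,dt\Big)^+\Big],
\]
and the positive part inside the expectation does \emph{not} vanish for large $x$: with positive probability $\XX$ hits $0$ very quickly, making $\tau_0$ small and the bracket strictly positive. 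Thus you cannot conclude $\uu(x,y)=0$ for any finite $x$. The paper instead argues by contradiction: assuming $b(y_0)=+\infty$ forces $[0,\infty)\times[y_0,\infty)\subset\cC$, which yields a \emph{deterministic} lower bound $\tau_*\ge t_0>0$ (after shifting to $y_1$). This is exactly what allows the running-cost term to overwhelm the boundary term in the limit $x\to\infty$ and produce $\uu<0$, the contradiction.

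\textbf{Right-continuity.} Your Dynkin-formula argument in a rectangle is valid for $\mu_0+\mu_1>0$ (where $\YY$ drifts down to the zero set $[x_1,x_2]\times\{y_0\}$ in finite time), but it collapses in the other two regimes. When $\mu_0+\mu_1=0$, $\YY$ is frozen on $R$ and the process can only exit through the lateral sides, where $\uu$ is \emph{not} known to vanish; you acknowledge this but offer no fix. When $\mu_0+\mu_1<0$, $\YY$ drifts \emph{away} from $\{y=y_0\}$, so the exit is through the top or sides and the ``local transversality'' you invoke is too vague to constitute a proof. The paper avoids these difficulties entirely by a PDE test-function argument: multiply $(\cL_{X,Y}-\rho)\uu=\rho g$ by $\phi\in C^\infty_c(x_1,x_2)$, integrate by parts in $x$, and let $y\downarrow y_0$. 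For $\mu_0+\mu_1\ge 0$ this uses $\uu_y\ge 0$ and the limit $\uu(\cdot,y_0)=0$ to reach $0\ge\rho\int g\phi>0$. For $\mu_0+\mu_1<0$ the paper differentiates the PDE once more in $x$, uses the sign $\uu_x\le 0$ established in Proposition~\ref{prop:b2}, and obtains a contradiction via the auxiliary function $F_\phi(y)=\int\uu_{xy}\phi\,dx$. Both cases are handled with no probabilistic exit-time estimates.
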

\begin{proof}
\emph{Step 1}. (\emph{Finiteness}.) 
Let us start by proving finiteness of the boundary with an argument by contradiction. Assume indeed that there is $y_0\in\R$ such that $[0,+\infty)\times\{y_0\}\in\cC$. Then by monotonicity of $b(\cdot)$ it must be that $[0,+\infty)\times[y_0,+\infty)\subset\cC$. Notice that we have already shown in step 2 of the proof of Proposition \ref{prop:b2} that this is impossible if $\mu_0+\mu_1<0$ and $\rho\ge \tfrac{\theta}{2\sigma}|\mu_0+\mu_1|$. Then it remains to prove the contradiction  for $\mu_0+\mu_1\ge 0$.

For future use, let us introduce
\begin{align}\label{XYcirc}
X^\circ_t=x+\mu_0t+\sigma\OW_t\quad\text{and}\quad Y^\circ_t=y-\tfrac{1}{2}(\mu_1+\mu_0)t.
\end{align}
Fix $t_0>0$ and define $y_1:=y_0+\tfrac{1}{2}(\mu_1+\mu_0)t_0$. Then by assumption it must be $\OP_{x,y_1}(\tau_*\ge t_0)=1$ for all $x\ge 0$. For $\tau_0:=\inf\{t\ge0\,:\,\XX_t=0\}$, using the strong Markov property and \eqref{uu} we obtain
\begin{align}\label{b1}
\uu(x,y_1)=\OE_{x,y_1}\left[e^{-\rho\tau_0}\uu(0,Y^{\circ}_{\tau_0})\mathds{1}_{\{\tau_0<\tau_*\}}-\rho\int_0^{\tau_0\wedge\tau_*}e^{-\rho t}g(X^\circ_t, Y^\circ_t)dt\right], 
\end{align}
where we use that for $t\le \tau_0$ we have $(\XX_t,\YY_t)=(X^\circ_t,Y^\circ_t)$, $\OP_{x,y_1}$-a.s.

From \eqref{sublin} we deduce that for some $c_{y_1}>0$, only depending on $y_1$, we have
\begin{align*}
e^{-\rho\tau_0}\uu(0,Y^{\circ}_{\tau_0})\le e^{-\rho\tau_0}\Big(1+ c_1 \,e^{\frac{\theta}{\sigma}Y^\circ_{\tau_0}}\Big)\le c_{y_1}\,e^{-\rho\tau_0},
\end{align*} 
where in the last inequality we have also used $\mu_0+\mu_1\ge 0$. Plugging the latter bound into \eqref{b1} and using that $\tau_*\ge t_0$ we get
\begin{align}\label{b2}
\uu(x,y_1)\le\,c_{y_1}\,\OE_{x,y_1}\left[e^{-\rho\tau_0}\right]-\rho\,\OE_{x,y_1}\left[\int_0^{\tau_0\wedge t_0}e^{-\rho t}g(X^\circ_t, Y^\circ_t)dt\right]. 
\end{align}
Taking $x\to\infty$ the first term on the right-hand side of \eqref{b2} goes to zero whereas the second one diverges to $+\infty$, because $\lim_{x\to\infty}\OP_{x,y_1}(\tau_0\ge t_0)= 1$ and $x\mapsto g(x,y)$ is increasing. Hence we have a contradiction.
\vspace{+4pt}

\emph{Step 2}. (\emph{Left-continuity}.) Using that $b(\cdot)$ is non-decreasing and that $\cS$ is closed we obtain that for any $y_0\in\R$ and any increasing sequence $y_n\uparrow y_0$ as $n\to\infty$ it must be that $\lim_{n\to\infty}(b(y_n),y_n)=(b(y_0-),y_0)\in\cS$, where $b(y_0-)$ is the left limit of $b$ at $y_0$. Then $b(y_0-)\ge b(y_0)$ by \eqref{cS3}, and since $b(y_n)\le b(y_0)$ for all $n\ge 1$ then $b$ must be left-continuous, hence lower semi-continuous. 
\vspace{+4pt}

\emph{Step 3}. (\emph{Right-continuity}.) The argument by contradiction that we are going to use draws from \cite{DeA15}. Assume there exists $y_0\in\R$ such that $b(y_0)<b(y_0+)$ and take $b(y_0)<x_1<x_2<b(y_0+)$ and a non-negative function $\phi\in C^\infty_c(x_1,x_2)$ such that $\int_{x_1}^{x_2}\phi(x)dx=1$. Thanks to Lemma \ref{lem:BVP1} (cf.~also \eqref{Lu}) we have 
\begin{align}\label{Luu}
(\cL_{X,Y}-\rho)\uu(x,y)=g(x,y)\quad\text{for $(x,y)\in(x_1,x_2)\times(y_0,+\infty)$.}
\end{align}

(\emph{Case $\mu_0+\mu_1\ge 0$}.) Let us first consider the case of $\mu_0+\mu_1\ge 0$. Recall that $\uu_y\ge 0$ in $\cC$ by \eqref{uhaty}.
%$\varphi\mapsto \OU(x,\varphi)-(1+\varphi)$ is non-decreasing due to $(ii)$ and $(iii)$ in Proposition \ref{prop:OU1}. Hence, thanks to \eqref{OU-UU} and the definition of $\uu$ in \eqref{uu} we also have $\uu_y\ge 0$ in $\cC$. 
Then, multiplying \eqref{Luu} by $\phi(\cdot)$ and integrating by parts we obtain
\begin{align*}
0\ge&\,-\tfrac{1}{2}(\mu_0+\mu_1)\int^{x_2}_{x_1}\uu_y(x,y)\phi(x)dx\\
=& \int^{x_2}_{x_1}\big[\rho\, g+\rho\, \uu-\mu_0\,\uu_x-\tfrac{\sigma^2}{2}\,\uu_{xx}\big](x,y)\phi(x)dx\\
=& \int^{x_2}_{x_1}\big[(\rho\, g+\rho\, \uu)(x,y)\phi(x)+\mu_0\,\uu(x,y)\phi'(x)-\tfrac{\sigma^2}{2}\,\uu(x,y)\phi''(x)\big]dx.
\end{align*}
Taking limits as $y\downarrow y_0$ and using dominated convergence and $\uu(x,y_0)=0$ we obtain a contradiction, that is 
\begin{align*}
0\ge&\,\rho\int^{x_2}_{x_1}g(x,y_0)\phi(x)dx>0.
\end{align*}
Hence $b(y_0)=b(y_0+)$.

(\emph{Case $\mu_0+\mu_1< 0$}.) Next consider the case $\mu_0+\mu_1<0$ and $\rho\ge \tfrac{\theta}{2\sigma}|\mu_0+\mu_1|$. Thanks to classical results on internal regularity of PDEs (e.g., \cite[Ch.~3, Thm.~10]{Fri}) we can differentiate \eqref{Luu} with respect to $x$ and say that $\uu_x\in C^{1,2}$ in $\cC$ and it solves
\begin{align}\label{pde-ux}
(\cL_{X,Y}-\rho)\uu_x=g_x\quad\text{in $(x_1,x_2)\times(y_0,+\infty)$}.
\end{align}
It is crucial now to recall that $\uu_x\le 0$ as it was shown in the proof of Proposition \ref{prop:b2}. 

For $y>y_0$, from \eqref{pde-ux} we get 
\begin{align}\label{pde-ux2}
&\int^{x_2}_{x_1}(\cL_{X,Y}\uu_x-\rho\,\uu_x-\rho\,g_x)(x,y)\phi(x)dx=0.
\end{align}
Defining $F_\phi(y):=\int^{x_2}_{x_1}\uu_{xy}(x,y)\phi(x)dx$ and using integration by parts, \eqref{pde-ux2} may be rewritten as
\begin{align*}
&\tfrac{1}{2}|\mu_0+\mu_1|F_\phi(y)\\
&=\int^{x_2}_{x_1}\big[\tfrac{\sigma^2}{2}\uu(x,y)\phi'''(x)-\mu_0\,\uu(x,y)\phi''(x)-\rho \uu(x,y)\phi'(x)+\rho g_x(x,y)\phi(x)\big]dx.
\end{align*}
Taking limits as $y\downarrow y_0$ and using $\uu(x,y_0)=0$ gives
\begin{align*}
F_\phi(y_0+)=\frac{2\rho}{|\mu_0+\mu_1|}\int^{x_2}_{x_1} g_x(x,y_0)\phi(x)dx\ge \rho_0>0
\end{align*}
for some $\rho_0$. Hence, there is $\eps>0$ such that $F_\phi(y)\ge \rho_0/2$ for $y\in(y_0,y_0+\eps)$. Then, from the definition of $F_\phi$, integration by parts and Fubini's theorem we find
\begin{align*}
\tfrac{1}{2}\rho_0\eps\le&\int_{y_0}^{y_0+\eps}F_\phi(y)dy=-\int_{x_1}^{x_2}\left(\int_{y_0}^{y_0+\eps}\uu_{y}(x,y)dy\right)\phi'(x)dx\\
=&-\int_{x_1}^{x_2}\uu(x,y_0+\eps)\phi'(x)dx=\int_{x_1}^{x_2}\uu_x(x,y_0+\eps)\phi(x)dx\le 0,
\end{align*}
where we also used $\uu(x,y_0)=0$. The contradiction above implies $b(y_0)=b(y_0+)$.
\end{proof}

Monotonicity of $b$ is the key to the regularity of the value function in this context. In fact we will use it to show that the first hitting time to $\cS$ coincides with the first hitting time to the interior of $\cS$. The latter, along with regularity (in the sense of diffusions) of $\partial\cS$, will be sufficient to prove that $\UU\in C^1((0,+\infty)\times\R)$, or equivalently $\OU\in C^1((0,+\infty)^2)$.

Let us introduce the first hitting times to $\cS$ and to $\cS^\circ:=\text{int}\{\cS\}$ as 
\begin{align}
\label{si-S}\sigma_*:=\inf\{t>0\,:\,(\XX_t,\YY_t)\in\cS\}\quad\text{and}\quad \sigma^\circ_*:=\inf\{t>0\,:\,(\XX_t,\YY_t)\in\cS^\circ\}.
\end{align}
Notice that continuity of paths for $(\XX,\YY)$ implies that $\tau_*=\sigma_*$ for all $(x,y)\in([0,+\infty)\times\R)\setminus\partial\cC$. It will be crucial to prove that the equality also holds at points of the boundary $(x,y)\in\partial\cC$. For future reference we define
\begin{align}\label{y^*_0}
y^*_0:=\inf\{y\in\R\,:\,(0,y)\in \cC\}\quad\text{(with $\inf\varnothing=+\infty$)}. 
\end{align}
\begin{lemma}\label{lem:ht}
Under Assumption \ref{ass:mu}, for all $(x,y)\in[0,+\infty)\times\R\setminus(0,y^*_0)$ we have
\begin{align}\label{Pss}
\OP_{x,y}(\sigma_*=\sigma_*^\circ)=1.
\end{align}
\end{lemma}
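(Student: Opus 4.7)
The trivial inclusion $\cS^\circ \subset \cS$ gives $\sigma_* \le \sigma_*^\circ$ always, so only the reverse inequality needs work. For $(x, y) \in \cS^\circ$ both sides vanish; for $(x, y) \in \cC$, path continuity forces $\sigma_* > 0$ a.s.~and $(\XX_{\sigma_*}, \YY_{\sigma_*}) \in \partial\cS$ on $\{\sigma_* < \infty\}$. The plan is therefore to apply the strong Markov property at $\sigma_*$ and reduce the lemma to the single claim
\[
\OP_{x_0, y_0}(\sigma_*^\circ = 0) = 1 \qquad \text{for every } (x_0, y_0) \in \partial\cS \setminus \{(0, y^*_0)\},
\]
which simultaneously disposes of the case $(x, y) \in \partial\cS \setminus \{(0, y^*_0)\}$ in the statement. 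To close the reduction one also needs $\OP_{x, y}\big((\XX_{\sigma_*}, \YY_{\sigma_*}) = (0, y^*_0)\big) = 0$ for $(x, y) \ne (0, y^*_0)$, which holds because the law of the hitting position on the graph $\partial\cS$ is atomless (the $\XX$-component is driven by a non-degenerate Brownian motion).

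Now fix $(x_0, y_0) \in \partial\cS \setminus \{(0, y^*_0)\}$. Combining Propositions \ref{prop:b1}--\ref{prop:b2} with Proposition \ref{prop:bf} identifies the corner as the only boundary point with zero $x$-coordinate, so $x_0 = b(y_0) > 0$. Consequently the reflection process $A$ is inactive on $[0, \eta)$ for some random $\eta > 0$, and
\[
\XX_t = x_0 + \mu_0 t + \sigma\OW_t, \qquad \YY_t = y_0 - \tfrac{1}{2}(\mu_0 + \mu_1) t, \qquad t \in [0, \eta).
\]
The task is to exhibit a sequence $t_n \downarrow 0$ with $\XX_{t_n} > b(\YY_{t_n})$, for which the monotonicity of $b$ is essential. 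When $\mu_0 + \mu_1 \ge 0$, $\YY$ is non-increasing so $b(\YY_t) \le b(y_0) = x_0$; Blumenthal's 0-1 law together with the law of the iterated logarithm for $\OW$ produces a sequence on which $\sigma\OW_{t_n}$ dominates $|\mu_0| t_n$, giving $\XX_{t_n} > x_0 \ge b(\YY_{t_n})$. When $\mu_0 + \mu_1 < 0$, $\YY$ is strictly increasing and $b(\YY_t) \ge x_0$; here one would combine continuity of $b$ (Proposition \ref{prop:bf}), which forces $b(\YY_t) - x_0 \to 0$, with the law of the iterated logarithm to dominate both $|\mu_0| t_n$ and $b(\YY_{t_n}) - x_0$ by $\sigma\OW_{t_n}$ on a suitable sequence.

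The hard part will be this second sub-case. With $b$ known only to be continuous, one has to quantify the decay $b(\YY_t) - x_0 \to 0$ against Brownian excursions of order $\sqrt{2t \log\log(1/t)}$ near zero, and a generic continuous non-decreasing function may vanish slower than any power of $t$. To close this gap I would extract an auxiliary one-sided H\"older estimate on $b$ from the parabolic equation $(\cL_{X,Y} - \rho)\UU = 0$ satisfied by $\UU$ inside $\cC$ (Lemma \ref{lem:BVP1}) together with the value matching $\UU = g$ on $\partial\cS$, or alternatively invoke an excursion-theoretic argument exploiting the non-degenerate martingale part of the semi-martingale $\XX_t - b(\YY_t)$.
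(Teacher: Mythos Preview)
Your reduction via the strong Markov property to the claim $\OP_{x_0,y_0}(\sigma_*^\circ=0)=1$ for boundary points is sound, and your treatment of the case $\mu_0+\mu_1\ge 0$ is essentially the paper's argument. The difficulty you flag in the case $\mu_0+\mu_1<0$ is real, and neither of your two proposed fixes closes it.

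The H\"older route is not available: nothing in the paper yields a one-sided H\"older estimate on $b$, and extracting one from $(\cL_{X,Y}-\rho)\UU=0$ together with $\UU=g$ on $\partial\cS$ would amount to proving boundary regularity for a degenerate parabolic free boundary problem---a substantial project in itself, and circular here since the paper's later regularity results rely on the present lemma. The excursion-theoretic suggestion points in the right direction but is not an argument; the issue is precisely that a merely continuous curve $t\mapsto b(\YY_t)$ can, in principle, rise faster than the iterated-logarithm envelope of $\sigma\OW$ near $t=0$, and one needs a genuine result about Brownian motion versus continuous moving barriers.

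The paper does not attempt either route. Instead it observes that, away from $x=0$, the process $\YY$ is a deterministic increasing function of time, so $t\mapsto b(\YY_t)$ is a \emph{continuous} function of time and one is asking whether a Brownian motion with drift, started on a continuous curve, enters the open region above it immediately. This is exactly the content of \cite[Cor.~8]{CP15} (see also Appendix~B of \cite{DeAGV17}), which the paper simply cites. That result requires only continuity of the barrier---no H\"older control---and settles the case in one line. You should replace your tentative programme for $\mu_0+\mu_1<0$ by an appeal to that corollary.
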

\begin{proof}
The statement is trivial for $(x,y)\in\cS^\circ$ and for $(x,y)\in\{0\}\times(-\infty,y^*_0)$ thanks to continuity of paths. It remains to consider $(x,y)\in\overline{\cC}\setminus(0,y^*_0)$, where $\overline{\cC}$ is the closure of $\cC$. First, we notice that thanks to monotonicity of $b$ we have
\begin{align}\label{Pss1}
\OP_{x,y}(\XX_{\sigma_*}>0)=1\quad \text{for all $(x,y)\in\overline{\cC}\setminus(0,y^*_0)$}.
\end{align}
If $\mu_1+\mu_2\le 0$, \eqref{Pss1} is obvious because $\YY$ is non-decreasing. If $\mu_1+\mu_2>0$, \eqref{Pss1} holds because 
\[
\OP_{x,y}(\XX_{\sigma_*}=0)=\OP_{x,y}((\XX_{\sigma_*},\YY_{\sigma_*})=(0,y^*_0))=0\quad\text{for all $(x,y)\in\overline{\cC}\setminus(0,y^*_0)$}.
\]
Let us now prove \eqref{Pss} in $\overline{\cC}\setminus(0,y^*_0)$.

In the case $\mu_1+\mu_0> 0$ the process $\YY$ has a negative drift and moves to the left at a constant rate, during excursions of $\XX$ away from $x=0$. Since $b$ is non-decreasing, then $t\mapsto b(\YY_t)$ is decreasing during excursions of $\XX$ away from $x=0$. It then becomes straightforward to verify \eqref{Pss}, due to the law of iterated logarithm for Brownian motion and \eqref{Pss1}.  

If $\mu_1+\mu_0=0$ the process $\YY$ only increases at times $t$ such that $\XX_t=0$, otherwise it stays constant. Then \eqref{Pss} holds, due to \eqref{Pss1} and because $\XX$ immediately enters intervals of the form $(x',+\infty)$, after reaching $x'$ (i.e., $x'$ is regular for $(x',+\infty)$). 

If $\mu_1+\mu_0<0$ the process $\YY$ increases. Moreover, during excursions of $\XX$ away from $x=0$, the rate of increase is constant. Recalling \eqref{Pss1}, we can therefore use \cite[Cor.~8]{CP15} to conclude that \eqref{Pss} indeed holds (see also a self contained proof in a setting similar to ours, in Appendix B of \cite{DeAGV17}). 
\end{proof}

We say that a boundary point $(x,y)\in\partial\cC$ is regular for the stopping set, in the sense of diffusions if 
\begin{align}\label{reg0}
\OP_{x,y}(\sigma_*>0)=0
\end{align}
(see \cite{BG}; see also \cite{DeAP18} for a recent account on this topic). Notice that, from the {\em $0-1$ Law}, if \eqref{reg0} fails then $\OP_{x,y}(\sigma_*>0)=1$. 

In case $\mu_0+\mu_1 \ge 0$, during excursions of $\XX$ away from zero the process $\YY$ is non-increasing. So the couple $(\XX,\YY)$ moves towards the left of the $(x,y)$-plane during such excursions (or $\YY$ is just constant if $\mu_0+\mu_1=0$). If $(\XX_0,\YY_0)=(x_0,y_0)\in\partial\cC$ with $x_0>0$, recalling that $b(\cdot)$ is non-decreasing, the law of iterared logarithm implies that $\OP_{x_0,y_0}(\sigma_*>0)=0$. So we can claim

\begin{proposition}\label{prop:reg1}
Assume $\mu_0+\mu_1 \ge 0$. Then all points $(x,y)\in\partial\cC$ with $x>0$ are regular for the stopping set, i.e.~\eqref{reg0} holds.
\end{proposition}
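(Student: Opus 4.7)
The plan is to exploit two structural facts established earlier: (a) by Propositions \ref{prop:b1}, \ref{prop:b2} and \ref{prop:bf} the boundary $b(\cdot)$ is non-decreasing and continuous, so $\cS=\{x\ge b(y)\}$ has a ``staircase'' shape; and (b) when $\mu_0+\mu_1\ge 0$ the component $\YY$ is non-increasing during any excursion of $\XX$ away from $0$ (cf.~\eqref{YY2}). Together these reduce the regularity question to a one-dimensional Law of the Iterated Logarithm (LIL) argument.

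First I would fix $(x_0,y_0)\in\partial\cC$ with $x_0>0$. The characterisation $\cS_y=[b(y),+\infty)$ in \eqref{cS3} forces $x_0=b(y_0)$. Since $x_0>0$ and paths of $\XX$ are continuous, $\OP_{x_0,y_0}$-a.s.~there is a random $\delta(\omega)>0$ on which $\XX$ stays strictly positive, so by the Skorokhod condition \eqref{SK-A} we have $A_t\equiv 0$ on $[0,\delta(\omega)]$. Consequently, on that random right-neighbourhood of $0$, the reflected SDE \eqref{XX2}--\eqref{YY2} collapses to the free dynamics $\XX_t=x_0+\mu_0 t+\sigma\OW_t$ and $\YY_t=y_0-\tfrac{1}{2}(\mu_0+\mu_1)t$.

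Second, from $\mu_0+\mu_1\ge 0$ we get $\YY_t\le y_0$ for $t\in[0,\delta(\omega)]$, and monotonicity of $b$ then yields $b(\YY_t)\le b(y_0)=x_0$ throughout the same interval. The LIL for Brownian motion gives $\limsup_{t\downarrow 0}(\XX_t-x_0)/\sqrt{2\sigma^2 t\log\log(1/t)}=1$ a.s., so there exists a sequence $t_n=t_n(\omega)\downarrow 0$ with $\XX_{t_n}>x_0\ge b(\YY_{t_n})$, i.e.~$(\XX_{t_n},\YY_{t_n})\in\cS$ by \eqref{cS3}. Thus $\sigma_*\le t_n$ for every $n$, so $\sigma_*=0$ $\OP_{x_0,y_0}$-almost surely, which is \eqref{reg0}.

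I do not expect a genuine obstacle: the three ingredients (monotonicity of $b$, non-positivity of the drift of $\YY$ away from the reflecting boundary, and $x_0>0$) fit together so that the LIL alone completes the argument; the Blumenthal $0$--$1$ Law is available if preferred but is not strictly needed. The only mildly delicate point is ensuring reflection is inactive on a random right-neighbourhood of $0$, which follows from $x_0>0$ and the path continuity of $\XX$. Note also that the argument genuinely fails for $\mu_0+\mu_1<0$, since there $\YY$ would \emph{increase} during excursions and $b(\YY_t)$ could then exceed $x_0$, which is why Proposition \ref{prop:reg1} is confined to the present regime.
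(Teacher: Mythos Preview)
Your proof is correct and follows exactly the same route as the paper: the paper's argument (given in the paragraph immediately preceding the proposition) is precisely that during excursions of $\XX$ away from $0$ the process $\YY$ is non-increasing, $b(\cdot)$ is non-decreasing, and hence the law of the iterated logarithm forces $\sigma_*=0$. Your write-up is simply a more detailed version of this, making explicit that $x_0=b(y_0)$, that reflection is inactive on a random right-neighbourhood of $0$, and how the LIL sequence lands in $\cS$.
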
  
To treat the regularity of $\partial\cC$ in the remaining case of $\mu_0+\mu_1<0$ we need to take a longer route because $(\XX,\YY)$ is now moving towards the right of the $(x,y)$-plane and in principle, when started from $\partial\cC$, it may `escape' from the stopping set. We shall prove below that this {\em is not} the case. For that, we first need to show that the \emph{smooth fit} holds at the boundary. Notice that this is the classical concept of smooth fit, i.e.~continuity of $z\mapsto\UU_x(z,y)$. Smooth fit in this sense does not imply that $(x,y)\mapsto\UU_x(x,y)$ is continuous across the boundary, which instead we will prove in Proposition \ref{prop:vC1}.
\begin{lemma}\label{lem:sm-f}
Assume $\mu_0+\mu_1 < 0$ and $\rho\ge \tfrac{\theta}{2\sigma}|\mu_0+\mu_1|$. For each $y\in\R$ we have $\UU_x(\cdot,y)\in C(0,+\infty)$ (equiv.~$\uu_x(\cdot,y)\in C(0,+\infty)$).
\end{lemma}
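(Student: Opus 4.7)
The plan is to prove the smooth fit at $x_0 := b(y)$ by sandwiching $L := \lim_{x \uparrow x_0}\UU_x(x,y)$ between $g_x(x_0, y)$ from above and below. Away from $x_0$, continuity of $\UU_x(\cdot,y)$ follows either from Lemma \ref{lem:BVP1} on $(0, x_0)$ or from $\UU \equiv g$ on $(x_0, +\infty)$; the case $x_0 = 0$ is trivial, so I focus on $x_0 > 0$. Existence of the left-limit $L$ is ensured by the PDE $(\cL_{X,Y}-\rho)\UU = 0$ in $\cC$, which together with the Lipschitz estimates in Proposition \ref{prop:OU1}(i) yields $\UU_{xx}(\cdot,y)$ locally bounded on $(0, x_0)$.

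The upper bound $L \le g_x(x_0, y)$ is immediate: for $x \in (0, x_0)$, the inequality $\UU \ge g$ combined with $\UU(x_0, y) = g(x_0, y)$ gives
\[
\UU(x_0, y) - \UU(x, y) \le g(x_0, y) - g(x, y),
\]
and the mean value theorem, justified by $C^1$-regularity in $\cC$ and continuity of $\UU$ at the boundary, produces $\xi_x \in (x, x_0)$ with $\UU_x(\xi_x, y)(x_0 - x)$ equal to the left-hand side. Sending $x \uparrow x_0$ yields the inequality.

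For the reverse direction I would let $\tau_\eps := \tau_*(x_0 - \eps, y)$ be optimal for $\UU(x_0 - \eps, y)$ and use it as a sub-optimal stopping rule at $(x_0, y)$, coupling the two processes via the same Brownian motion $\OW$. The Skorokhod formula \eqref{A} then gives $\Delta A_t := A^{x_0-\eps}_t - A^{x_0}_t \in [0, \eps]$, $\XX^{x_0-\eps}_t - \XX^{x_0}_t = -\eps + \Delta A_t$ and $\YY^{x_0-\eps, y}_t - \YY^{x_0, y}_t = \Delta A_t$. Exploiting the identity $\tfrac{2\mu_0}{\sigma^2} + \tfrac{2\theta}{\sigma} = \tfrac{2\mu_1}{\sigma^2}$, the pathwise difference of the discounted gains simplifies to
\[
e^{-\rho\tau_\eps + \tfrac{2\mu_0}{\sigma^2}A^{x_0}_{\tau_\eps}}\left[\bigl(1 - e^{\tfrac{2\mu_0}{\sigma^2}\Delta A_{\tau_\eps}}\bigr) + e^{\tfrac{\theta}{\sigma}(\XX^{x_0}_{\tau_\eps} + \YY^{x_0,y}_{\tau_\eps})}\bigl(1 - e^{-\tfrac{\theta}{\sigma}\eps + \tfrac{2\mu_1}{\sigma^2}\Delta A_{\tau_\eps}}\bigr)\right],
\]
and under Assumption \ref{ass:mu} with $\mu_0 + \mu_1 < 0$ both brackets are non-negative: the first because $\mu_0 < 0$ and $\Delta A \ge 0$, and the second because $\Delta A \le \eps$ forces $-\tfrac{\theta}{\sigma}\eps + \tfrac{2\mu_1}{\sigma^2}\Delta A \le \tfrac{\mu_0 + \mu_1}{\sigma^2}\eps \le 0$. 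Dividing by $\eps$, the second summand alone will produce the target $\tfrac{\theta}{\sigma}e^{\tfrac{\theta}{\sigma}(x_0+y)} = g_x(x_0, y)$ on the event $\{\Delta A_{\tau_\eps} = 0\}$, provided $\tau_\eps \to 0$.

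The crux, and the main obstacle I anticipate, is precisely the limiting procedure: I will need $\tau_\eps \to 0$ in probability as $\eps \downarrow 0$ together with $\OE[\Delta A_{\tau_\eps}]/\eps \to 0$. Since $x_0 > 0$ the starting points sit strictly inside $(0,+\infty)$, so $\Delta A_{\tau_\eps} = 0$ on $\{\tau_\eps < \tau_0^{x_0-\eps}\}$, where $\tau_0^{x_0-\eps} := \inf\{t : \XX^{x_0-\eps}_t = 0\}$; the remaining point is to show that $\OP(\tau_\eps \ge \tau_0^{x_0-\eps})$ vanishes and that $\tau_\eps$ itself contracts. This I would extract from the continuity of $\UU$ at $(x_0, y)$, which forces the optimally stopped reward to concentrate at $g(x_0, y)$, combined with the monotonicity of $b$ and the fact that $\YY$ is non-decreasing under $\mu_0+\mu_1 <0$ (so $\XX$ must reach level $x_0$ before stopping). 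A dominated convergence argument, controlled via the sublinear bound \eqref{sublin}, will then close the lower bound $L \ge g_x(x_0, y)$ and match the upper bound.
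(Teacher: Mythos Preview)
Your upper bound $L\le g_x(x_0,y)$ is fine and amounts to $\uu_x\le 0$ in $\cC$, which the paper also uses (it was obtained in the proof of Proposition~\ref{prop:b2}).

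The lower bound has a genuine gap: the whole argument hinges on $\tau_\eps:=\tau_*(x_0-\eps,y)\to 0$, but this is essentially the regularity of $(x_0,y)$ for $\cS$ in the sense of diffusions. Indeed, by Lemma~\ref{lem:ht} and the upper-semicontinuity step of Proposition~\ref{prop:c-ht} one has $\limsup_{\eps\downarrow 0}\tau_\eps\le \sigma^\circ_*(x_0,y)=\sigma_*(x_0,y)$, so $\tau_\eps\to 0$ follows once $\sigma_*(x_0,y)=0$. In the paper's logical order, however, regularity for the case $\mu_0+\mu_1<0$ (Proposition~\ref{lem:reg}) is established \emph{after} and \emph{by means of} Lemma~\ref{lem:sm-f}: assuming $(x_0,y_0)$ is irregular, the bound \eqref{ub} yields $\uu_x(x_0-,y_0)<0$, contradicting smooth fit. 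Your heuristic does not close this loop. Continuity of $\UU$ only says the expected discounted gain concentrates at $g(x_0,y)$, but since $\XX^{x_0-\eps}_{\tau_\eps}\ge x_0$ and $\YY^{x_0-\eps,y}_{\tau_\eps}\ge y$, the payoff factor $g(\XX_{\tau_\eps},\YY_{\tau_\eps})\ge g(x_0,y)$ can compensate the discount $e^{\frac{2\mu_0}{\sigma^2}A_{\tau_\eps}-\rho\tau_\eps}<1$, so $\tau_\eps$ need not be small. The observation that ``$\XX$ must reach level $x_0$ before stopping'' gives only a \emph{lower} bound on $\tau_\eps$. The difficulty is real: when $\mu_0+\mu_1<0$ the boundary $t\mapsto b(\YY_t)$ drifts to the right and may outrun the process.

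The paper sidesteps this entirely with a local-time argument. It assumes, for contradiction, that $\uu_x(x_0-,y_0)\le-\delta_0<0$, freezes the second coordinate via $\uu(\XX_t,\YY_t)\ge\uu(\XX_t,y_0)$ (using $\uu_y\ge 0$), and applies the It\^o--Tanaka formula to $e^{-\rho t}\uu(X^\circ_t,y_0)$ on a small box around $(x_0,y_0)$. The jump of $\uu_x(\cdot,y_0)$ at $x_0$ produces a local-time term $\delta_0\,\OE[L^{x_0}_{\tau_B\wedge t}]\sim\sqrt{t}$, which dominates the remaining $O(t)$ contributions as $t\downarrow 0$ and forces $\uu(x_0,y_0)>0$. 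No control of $\tau_*$ near the boundary is needed.
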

\begin{proof}
From 
\begin{align*}
\tfrac{\sigma^2}{2}\uu_{xx}(x,y)=&\,\rho g(x,y)+\rho\uu(x,y)-\mu_0\uu_x(x,y)+\tfrac{1}{2}(\mu_0+\mu_1)\uu_y(x,y),\quad\text{for $(x,y)\in\cC$}
\end{align*}
and using \eqref{Lip-OU} (which clearly implies Lipschitz continuity of $\UU$ as well) we see that for any bounded set $B$ it must be that 
\begin{align}\label{IT}
\text{$\uu_{xx}$ is bounded on the closure of $B\cap\cC$.} 
\end{align}
This fact will be used later to justify the use of It\^o-Tanaka formula in \eqref{sm-f1}.

We establish the smooth fit with an argument by contradiction. The first step is to recall that $\uu_x\le 0$ in $\cC$ as it was verified in the proof of Proposition \ref{prop:b2}. Second, notice that any $(x_0,y_0)\in\partial\cC$ must be of the form $(b(y_0),y_0)$ due to continuity of $y\mapsto b(y)$ (Proposition \ref{prop:bf}). Next, assume that for some $y_0$ and $x_0=b(y_0)>0$ we have $\uu_x(x_0-,y_0)<-\delta_0$ for some $\delta_0>0$, where $\uu_x(x_0-,y_0)$ exists due to \eqref{IT}. Take a bounded rectangular neighbourhood $B$ of $(x_0,y_0)$ such that $B\cap(\{0\}\times\R)=\varnothing$ and let $\tau_B:=\inf\{t\ge0\,:\,(\XX_t,\YY_t)\notin B\}$. Then from the super-martingale property of $\UU$ \eqref{supermg}, using that $A_{\tau_B\wedge t}=0$ for all $t\ge0$ and recalling \eqref{XYcirc}, we have
\begin{align*}
\uu(x_0,y_0)\ge \OE_{x_0,y_0}\left[e^{-\rho(\tau_B\wedge t)}\uu(X^\circ_{\tau_B\wedge t},Y^\circ_{\tau_B\wedge t})-\rho\int_0^{\tau_B\wedge t}e^{-\rho s}g(X^\circ_s,Y^\circ_s)ds\right].
\end{align*}
Now we notice that $t\mapsto Y^\circ_{\tau_B\wedge t}$ is increasing. Moreover, recalling \eqref{uhaty}, we have $\uu_y\ge 0$ in $\cC$. This implies $\uu(X^\circ_{\tau_B\wedge t},Y^\circ_{\tau_B\wedge t})\ge \uu(X^\circ_{\tau_B\wedge t},y_0)$, $\OP_{x_0,y_0}$-a.s. Finally, observing that $g$ is bounded on $B$ we obtain
\begin{align}\label{sm-f1}
\uu(x_0,y_0)\ge \OE_{x_0,y_0}\left[e^{-\rho(\tau_B\wedge t)}\uu(X^\circ_{\tau_B\wedge t},y_0)-c (\tau_B\wedge t)\right]
\end{align}
for some $c=c(B)>0$ that depends on the set $B$ and {\em will vary from line to line below}. 

As anticipated, we can now use It\^o-Tanaka formula in \eqref{sm-f1} thanks to \eqref{IT}. We let $\cL_X=\tfrac{\sigma^2}{2}\partial_{xx}+\mu_0\partial_x$, denote the local time of $X^\circ$ at $x_0$ by $L^{x_0}$, and notice also that $\uu_{xx}(\,\cdot\,,y_0)=0$ for $x>x_0$. Then
\begin{align}\label{sm-f2}
0\ge&\, \OE_{x_0,y_0}\left[\int_0^{\tau_B\wedge t}e^{-\rho s}(\cL_X-\rho)\uu(X^\circ_{s},y_0)\mathds{1}_{\{X^\circ_s\neq x_0\}}ds-c (\tau_B\wedge t)\right]\\[+4pt]
&-\OE_{x_0,y_0}\left[\int_0^{\tau_B\wedge t}e^{-\rho s}\uu_x(x_0-,y_0)dL^{x_0}_s\right]\nonumber\\[+4pt]
\ge &\,\delta_0 e^{-\rho t}\OE_{x_0,y_0}\big[L^{x_0}_{\tau_B\wedge t}\big]-c\,\OE_{x_0,y_0}\big[\tau_B\wedge t\big],\nonumber
\end{align}
where in the final inequality we used that $(\cL_X-\rho)\uu$ is bounded on $B$.
Letting $t\to0$ the inequality in \eqref{sm-f2} leads to a contradiction because $\OE_{x_0,y_0}\big[L^{x_0}_{\tau_B\wedge t}\big]\approx\sqrt{t}$ whereas $\OE_{x_0,y_0}\big[\tau_B\wedge t\big]\approx t$ (the argument is similar to the one used to prove Proposition \ref{prop:notempty}. See also, e.g., \cite[Lem.~6.5]{DeAGV17} or \cite[Lem.~13]{Pe17}).

Hence the claim is proved.
\end{proof}

Next we establish regularity of $\partial\cC$ in the sense of diffusions, when $\mu_0+\mu_1<0$.
\begin{proposition}\label{lem:reg}
Assume $\mu_0+\mu_1<0$ and $\rho\ge \tfrac{\theta}{2\sigma}|\mu_0+\mu_1|$. Then all points $(x,y)\in\partial\cC$ with $x>0$ are regular for the stopping set, i.e.~\eqref{reg0} holds.
\end{proposition}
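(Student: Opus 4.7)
The strategy is to derive a contradiction by applying It\^o's formula to the function $-\uu_x$, which by Proposition \ref{prop:b2} is non-negative on $\cC$, vanishes on $\cS$, and—thanks to the smooth fit of Lemma \ref{lem:sm-f}—equals zero at $(x_0,y_0)$. Suppose for contradiction that $(x_0,y_0)\in\partial\cC$ with $x_0>0$ satisfies $\OP_{x_0,y_0}(\sigma_*>0)=1$. Pick a bounded open rectangle $B\subset(0,+\infty)\times\R$ centered at $(x_0,y_0)$ and define
\begin{align*}
\tau_B:=\inf\{t\ge 0:(X^\circ_t,Y^\circ_t)\notin B\}, \qquad T_B:=\sigma_*\wedge\tau_B,
\end{align*}
both of which are strictly positive $\OP_{x_0,y_0}$-a.s.

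By interior parabolic regularity for \eqref{Luu}—the operator $\cL_{X,Y}$ is non-degenerate in $x$ with constant coefficients, and the non-zero factor $-\tfrac{1}{2}(\mu_0+\mu_1)$ of $\partial_y$ plays the role of a time derivative—the function $\uu$ is $C^\infty$ on $\cC$. Differentiating \eqref{Luu} with respect to $x$ then gives
\begin{align*}
(\cL_{X,Y}-\rho)\uu_x=\rho\,g_x\qquad\text{in } \cC,
\end{align*}
where $g_x(x,y)=(\theta/\sigma)e^{\theta(x+y)/\sigma}$ is strictly positive and bounded below by a positive constant on $B$. In particular, $-\uu_x$ is a strict $(\cL_{X,Y}-\rho)$-supersolution on $\cC$. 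Moreover, by \eqref{IT} $\uu_{xx}$ is bounded on $\overline{B\cap\cC}$ by some constant $K$; combined with the smooth-fit identity $\uu_x(b(y)-,y)=0$, integrating $\uu_{xx}$ from $x$ up to $b(y)$ yields the pointwise estimate
\begin{align*}
0\le -\uu_x(x,y)=\int_x^{b(y)}\uu_{xx}(\xi,y)\,d\xi\le K\big(b(y)-x\big),\qquad (x,y)\in B\cap\cC.
\end{align*}

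For any $\eps\in(0,t)$, the sample path $(X^\circ_s,Y^\circ_s)$ lies in $B\cap\cC$ for $s\in[\eps,t\wedge T_B)$, so standard It\^o's formula applied on $[\eps,t\wedge T_B]$—together with the boundedness of $\uu_{xx}$ making the stochastic integral a true martingale—yields
\begin{align*}
\OE_{x_0,y_0}\!\big[e^{-\rho(t\wedge T_B)}(-\uu_x)(X^\circ_{t\wedge T_B},Y^\circ_{t\wedge T_B})\big]-\OE_{x_0,y_0}\!\big[e^{-\rho\eps}(-\uu_x)(X^\circ_\eps,Y^\circ_\eps)\big]=-\rho\,\OE_{x_0,y_0}\!\bigg[\int_\eps^{t\wedge T_B}\!e^{-\rho s}g_x(X^\circ_s,Y^\circ_s)\,ds\bigg].
\end{align*}
Using the bound from the previous paragraph with $Y^\circ_\eps=y_0+c\eps$ (for $c:=-\tfrac{1}{2}(\mu_0+\mu_1)>0$) and $\OE|X^\circ_\eps-x_0|\le|\mu_0|\eps+\sigma\sqrt{\eps/\pi}$, one obtains
\begin{align*}
\OE_{x_0,y_0}\!\big[(-\uu_x)(X^\circ_\eps,Y^\circ_\eps)\big]\le K\,\OE_{x_0,y_0}\!\big[(b(y_0+c\eps)-X^\circ_\eps)^+\big]\longrightarrow 0\qquad\text{as }\eps\to 0,
\end{align*}
by continuity of $b$ (Proposition \ref{prop:bf}). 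Letting $\eps\to 0$ in the It\^o identity therefore gives
\begin{align*}
\OE_{x_0,y_0}\!\big[e^{-\rho(t\wedge T_B)}(-\uu_x)(X^\circ_{t\wedge T_B},Y^\circ_{t\wedge T_B})\big]=-\rho\,\OE_{x_0,y_0}\!\bigg[\int_0^{t\wedge T_B}\!e^{-\rho s}g_x(X^\circ_s,Y^\circ_s)\,ds\bigg].
\end{align*}
The left-hand side is non-negative while the right-hand side is non-positive, so both vanish. Since $g_x$ is bounded below by a positive constant on $B$, vanishing of the right-hand side forces $t\wedge T_B=0$ $\OP_{x_0,y_0}$-a.s.\ for every $t>0$; hence $T_B=0$ a.s., contradicting $\sigma_*\wedge\tau_B>0$ a.s.

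The main delicacy is the passage $\eps\to 0$ in the left-endpoint expectation. The smooth fit of Lemma \ref{lem:sm-f} provides continuity of $\uu_x$ only in the $x$-variable with $y$ held fixed—joint continuity across $\partial\cC$ is only proved later, in Proposition \ref{prop:vC1}—so the vanishing of $\OE[(-\uu_x)(X^\circ_\eps,Y^\circ_\eps)]$ cannot be read off directly from continuity at $(x_0,y_0)$. It has to be extracted quantitatively via the Lipschitz-type estimate $-\uu_x(x,y)\le K(b(y)-x)$, which combines smooth fit at each horizontal level with the a priori boundedness of $\uu_{xx}$ from \eqref{IT}.
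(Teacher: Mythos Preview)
Your argument is correct and follows a genuinely different route from the paper's. The paper proceeds in two steps: first, by comparing $\uu(x,y)$ with $\uu(x-\eps,y)$ via the strong-Markov representation of $\uu$ at the level-$\eps$ hitting time, it derives the upper bound
\[
\uu_x(x,y)\le -\rho\,\OE_{x,y}\Big[\int_0^{\tau_*\wedge\tau_0}e^{-\rho s}g_x(X^\circ_s,Y^\circ_s)\,ds\Big]
\]
valid throughout $\cC$; second, it lets $x_n\uparrow x_0$ along the fixed level $y_0$ and uses monotonicity of the associated stopping times to show the bound stays strictly negative in the limit, contradicting smooth fit at the single level $y=y_0$. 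You instead apply It\^o's formula to $e^{-\rho s}(-\uu_x)$ using the differentiated PDE $(\cL_{X,Y}-\rho)\uu_x=\rho g_x$, and rely on the Lipschitz-type estimate $-\uu_x\le K(b(y)-x)$---which combines smooth fit at \emph{every} nearby level with the $\uu_{xx}$ bound \eqref{IT}---to kill the time-$\eps$ term and reach a nonnegative-equals-nonpositive identity. The paper's approach yields an explicit integral bound on $\uu_x$ as a byproduct; your approach is closer in spirit to a maximum-principle argument and avoids the approximating sequence $x_n\uparrow x_0$, at the cost of invoking smooth fit along a range of $y$-levels rather than just at $y_0$. One technical remark: since $\uu_x$ is only known to be $C^{1,2}$ in the open set $\cC$, It\^o's formula on $[\eps,t\wedge T_B]$ should strictly be applied up to an exit time from a compact subset of $\cC$ and then passed to the limit; your estimate $-\uu_x\le K(b(y)-x)$ is exactly what justifies that passage.
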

\begin{proof}
The idea of proof is to show that if $\OP_{x_0,y_0}(\sigma_*>0)=1$ for some $(x_0,y_0)\in\partial\cC$ then $\uu_x(x_0-,y_0)<0$, which contradicts Lemma \ref{lem:sm-f}.
\vspace{+4pt}

\emph{Step 1.} [\emph{Upper bound on $\uu_x$}.] Let us start by fixing $(x,y)\in\cC$.
It is convenient to rewrite $\uu$ in the following form: let $\tau_\eps:=\inf\{t\ge0\,:\,\XX_t=\eps\}$, for $\eps\ge 0$, then by strong Markov property we have
\begin{align}\label{uu2}
\uu(x,y)=&\,\sup_{\tau}\OE_{x,y}\Big[\mathds{1}_{\{\tau>\tau_\eps\}}e^{-\rho\tau_\eps}\uu(\eps,Y^\circ_{\tau_\eps})-\rho\int_{0}^{\tau_\eps\wedge\tau} e^{-\rho t}g(X^\circ_t,Y^\circ_t)dt\Big],
\end{align}
where we used that $(\XX_t,\YY_t)=(X^\circ_t,Y^\circ_t)$ for $t\le\tau_\eps$, $\OP_{x,y}$-a.s., with the notation of \eqref{XYcirc}.

Notice that $\tau_\eps$ is independent of $y$ and therefore we can say $\tau_\eps=\tau_\eps(x)$. Moreover, due to \eqref{XX2}, it is clear that 
\begin{align}\label{taueps}
\tau_0(x-\eps)=\tau_\eps(x),\quad\OP-a.s.
\end{align}
Now, fix $\eps>0$, denote $X^{\circ,\eps}_t=x-\eps+\mu_0t+\sigma\OW_t$ and $\tau^\eps_0=\tau_0(x-\eps)$,  $\OP$-a.s., and take $\tau'=\tau_*(x,y)$, which is sub-optimal for $\uu(x-\eps,y)$.
%and for simplicity denote $Y^\circ_t=y-\tfrac{1}{2}(\mu_1+\mu_0)t$, $\OP$-a.s. 
Then we obtain 
\begin{align}\label{reg1}
\uu(x-\eps,y)\ge \OE\Big[\mathds{1}_{\{\tau'>\tau_0^\eps\}}e^{-\rho\tau^\eps_0}\uu(0,Y^\circ_{\tau^\eps_0})-\rho\int_{0}^{\tau^\eps_0\wedge\tau'}\! e^{-\rho t}g(X^{\circ,\eps}_t,Y^\circ_t)dt\Big].
\end{align}

Thanks to \eqref{taueps}, in \eqref{reg1} we can replace $\tau^\eps_0$ with $\tau_\eps$ as in \eqref{uu2}. Then,
subtracting \eqref{reg1} from \eqref{uu2} we obtain
\begin{align}
\uu&\,(x,y)-\uu(x-\eps,y)\\[+4pt]
\le &\,\OE\Big[\mathds{1}_{\{\tau'>\tau_\eps\}}e^{-\rho\tau_\eps}\left(\uu(\eps,Y^\circ_{\tau_\eps})-\uu(0,Y^\circ_{\tau_\eps})\right)\Big]\nonumber\\[+4pt]
&\,+\OE\left[\rho\int_{0}^{\tau_\eps\wedge\tau'} e^{-\rho t}\left(g(X^{\circ,\eps}_t,Y^\circ_t)-g(X^\circ_t,Y^\circ_t)\right)dt\right]\nonumber\\[+4pt]
\le &\, \OE\left[\rho\int_{0}^{\tau_\eps\wedge\tau'} e^{-\rho t}\left(g(X^{\circ,\eps}_t,Y^\circ_t)-g(X^\circ_t,Y^\circ_t)\right)dt\right],\nonumber
\end{align}
where the last inequality uses $\uu_x\le 0$ in $\cC$ (cf.~proof of Proposition \ref{prop:b2}) and $(\eps,Y^\circ_{\tau_\eps})\in\cC$ on $\{\tau'>\tau_\eps\}$. Now we can divide by $\eps$ and let $\eps\to0$. Using that $\tau_\eps\downarrow \tau_0$ and recalling $\tau'=\tau_*(x,y)$ we obtain
\begin{align}\label{ub}
\uu_x(x,y)\le -\rho \OE_{x,y}\left[\int_0^{\tau_*\wedge\tau_0}e^{-\rho t}g_x(X^\circ_t,Y^\circ_t)dt\right].
\end{align}
\vspace{+4pt}

\emph{Step 2.} [\emph{Non-smooth fit}.] Assume $(x_0,y_0)\in\partial\cC$ and $\OP_{x_0,y_0}(\sigma_*>0)=1$. Take an increasing sequence $x_n\uparrow x_0$ and denote $\tau_*^n=\tau_*(x_n,y_0)$. Notice that $\tau_*^n=\sigma_*^n=\sigma_*(x_n,y_0)$ for all $n\ge 1$, due to continuity of paths. Moreover, $\sigma_*^n$ is decreasing in $n$, with $\sigma_*^n\ge \sigma_*=\sigma_*(x_0,y_0)$, because $x\mapsto X^{x}_t$ is increasing and $\cS$ is of the form \eqref{cS3}. Setting $\tau_0^n=\inf\{t\ge0\,:\,X^{\circ,n}_t=0\}$ it is also easy to check that $\tau^n_0\uparrow\tau_0$ as $n\to\infty$. Then, denoting $\sigma^\infty:=\lim_{n\to\infty}\sigma^n_*$, we have
\[
\sigma^\infty\wedge\tau_0=\lim_{n\to\infty}\sigma^n_*\wedge \tau_0^n\ge \sigma_*\wedge\tau_0,\qquad\OP-a.s.
\]

Since $g_x\ge 0$ we can use monotone convergence and \eqref{ub} to get
\begin{align}\label{contra}
u_x(x_0-,y_0)=\lim_{n\to\infty}\uu_x(x_n,y_0)\le -\rho\, \OE_{x,y}\left[\int_0^{\sigma^\infty\wedge\tau_0}e^{-\rho t}g_x(X^\circ_t,Y^\circ_t)dt\right]<0,
\end{align}
where the final inequality holds because $\OP_{x_0,y_0}(\sigma^\infty\ge \sigma_*>0)=1$ by assumption. 
\vspace{+4pt}

The result in \eqref{contra} contradicts Lemma \ref{lem:sm-f}, hence $\OP_{x_0,y_0}(\sigma_*>0)=0$.
\end{proof}

As corollary to Lemma \ref{lem:ht} and Propositions \ref{prop:reg1} and \ref{lem:reg}, we have
\begin{corollary}\label{cor:ht}
Under Assumption \ref{ass:mu}, for all $(x,y)\in[0,+\infty)\times\R\setminus (0,y^*_0)$ we have
\begin{align}
\OP_{x,y}(\tau_*=\sigma_*=\sigma_*^\circ)=1.
\end{align}
\end{corollary}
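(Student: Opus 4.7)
The plan is to observe that the $\sigma_*=\sigma_*^\circ$ half of the claim has already been established in Lemma \ref{lem:ht}, so the whole task reduces to showing $\tau_*=\sigma_*$ almost surely on the specified domain. Recall that the two hitting times differ only in whether the infimum is taken over $t\ge 0$ or $t>0$, so the issue is purely a question of what happens at time $t=0$: on $\cC$ the two times automatically coincide, while on $\cS$ the equality amounts to $\OP_{x,y}(\sigma_*=0)=1$, i.e.\ regularity of the starting point for the stopping set in the sense of diffusions.

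I would split the analysis into three cases covering $[0,+\infty)\times\R\setminus(0,y^*_0)$. First, for $(x,y)\in\cC$: since $\cC$ is open by Corollary \ref{cor:CS1} and paths of $(\XX,\YY)$ are continuous, one has $\sigma_*>0$ a.s., and the infima defining $\tau_*$ and $\sigma_*$ must both be attained at the first time the closed set $\cS$ is met; hence $\tau_*=\sigma_*$. Second, for $(x,y)\in\cS^\circ$: trivially $\tau_*=0$, and $\sigma_*=0$ a.s.\ by path continuity since $\cS^\circ$ is a neighbourhood of the starting point. The closely related case $(0,y)$ with $y<y^*_0$ falls in the same bucket: continuity of $b$ together with $b(y)=0$ on $(-\infty,y^*_0)$ produces a relative neighbourhood of $(0,y)$ in $[0,+\infty)\times\R$ that lies in $\cS$, so again $\sigma_*=0$ (this is precisely the observation already used in the proof of Lemma \ref{lem:ht}). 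Third, for $(x,y)\in\partial\cC$ with $x>0$: we have $\tau_*=0$ trivially because $\cS$ is closed, and Propositions \ref{prop:reg1} and \ref{lem:reg} give $\OP_{x,y}(\sigma_*>0)=0$, so $\sigma_*=0$ a.s.

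The three cases exhaust $[0,+\infty)\times\R\setminus(0,y^*_0)$: indeed, the continuation set is $\cC$, while the stopping set $\cS$ decomposes as $\cS^\circ\cup\partial\cC$, and the only boundary point of $\cC$ meeting the excluded axis $\{x=0\}$ is $(0,y^*_0)$. Combining $\tau_*=\sigma_*$ from the case analysis with $\sigma_*=\sigma_*^\circ$ from Lemma \ref{lem:ht} yields the corollary. The main conceptual content is really in the regularity Propositions \ref{prop:reg1} and \ref{lem:reg}, whose delicate proofs (for $\mu_0+\mu_1<0$ via smooth fit and a contradiction argument) are what make the case of $\partial\cC$ with $x>0$ work; here the only mild care required is in verifying that the excluded single point $(0,y^*_0)$ is indeed the only obstruction, which is why the statement is restricted to $[0,+\infty)\times\R\setminus(0,y^*_0)$.
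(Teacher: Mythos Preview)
Your proposal is correct and follows exactly the approach the paper intends: the paper presents this corollary as an immediate consequence of Lemma~\ref{lem:ht} and Propositions~\ref{prop:reg1} and~\ref{lem:reg} without writing out a proof, and your case analysis simply spells out why those three results combine to give $\tau_*=\sigma_*=\sigma_*^\circ$ on the stated domain. The one place where you add a little more than the paper is the explicit treatment of $(0,y)$ with $y<y^*_0$, which the paper only alludes to inside the proof of Lemma~\ref{lem:ht}; this is fine and consistent with the paper's reasoning.
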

This corollary is important to determine continuity of the stopping times with respect to the initial position of the process, at all points of the state space.
\begin{proposition}\label{prop:c-ht}
Let Assumption \ref{ass:mu} hold. Then for any $(x,y)\in[0,+\infty)\times\R\setminus(0,y^*_0)$ and any sequence $(x_n,y_n)\to(x,y)$ it holds 
\begin{align}\label{c-ht}
\lim_{n\to\infty}\tau_*(x_n,y_n)=\tau_*(x,y),\quad\OP-a.s.
\end{align}
In particular, for $(x,y)\in\partial\cC\setminus(0,y^*_0)$ the limit is zero.
\end{proposition}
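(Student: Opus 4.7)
The plan is to exploit the identity $\tau_*=\sigma_*=\sigma_*^\circ$, established $\OP$-a.s.\ in Corollary \ref{cor:ht}, together with the standard lower/upper semicontinuity of hitting times of closed/open sets. The a priori ingredient I need is pathwise continuity of $(\XX^{x_n},\YY^{x_n,y_n})$ in the initial datum: from the explicit formula \eqref{A} (after the relabelling $W^\Q\rightsquigarrow\OW$ of Section \ref{sec:girsanov}) the map $x\mapsto A^x_t$ is $1$-Lipschitz for every $t\ge 0$, $\OP$-a.s., so
\[
\XX^x_t=x+\mu_0 t+\sigma\OW_t+A^x_t,\qquad \YY^{x,y}_t=y-\tfrac{1}{2}(\mu_1+\mu_0)t+A^x_t
\]
depend continuously on $(x,y)$, uniformly on compact intervals in $t$. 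Thus, given $(x_n,y_n)\to(x,y)$, the paths $(\XX^{x_n}_\cdot,\YY^{x_n,y_n}_\cdot)$ converge to $(\XX^x_\cdot,\YY^{x,y}_\cdot)$ locally uniformly, $\OP$-a.s.

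Write $\tau_n:=\tau_*(x_n,y_n)$ and $\tau:=\tau_*(x,y)$, and use Corollary \ref{cor:ht} to identify $\tau_n=\sigma_*(x_n,y_n)$ and $\tau=\sigma_*(x,y)=\sigma_*^\circ(x,y)$, a.s. For the liminf bound, I extract a subsequence along which $\sigma_*(x_n,y_n)\to\underline{\sigma}\in[0,+\infty]$. When $\underline{\sigma}<+\infty$, each point $(\XX^{x_n}_{\sigma_*(x_n,y_n)},\YY^{x_n,y_n}_{\sigma_*(x_n,y_n)})$ lies in $\cS$ (closed by Corollary \ref{cor:CS1}); passing to the limit via the uniform convergence of paths yields $(\XX^x_{\underline{\sigma}},\YY^{x,y}_{\underline{\sigma}})\in\cS$, so $\sigma_*(x,y)\le\underline{\sigma}$. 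When $\underline{\sigma}=+\infty$ there is nothing to prove. Hence $\liminf_n\tau_n\ge\sigma_*(x,y)=\tau$ a.s.

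For the limsup bound I switch to the open set $\cS^\circ$. On the event $\{\tau<+\infty\}$, fix $\eps>0$ and pick $t\in[\sigma_*^\circ(x,y),\sigma_*^\circ(x,y)+\eps]$ with $(\XX^x_t,\YY^{x,y}_t)\in\cS^\circ$; by pathwise continuity in $(x,y)$ and openness of $\cS^\circ$, $(\XX^{x_n}_t,\YY^{x_n,y_n}_t)\in\cS^\circ$ for all sufficiently large $n$, so $\sigma_*^\circ(x_n,y_n)\le t\le\sigma_*^\circ(x,y)+\eps$. Since $\sigma_*(x_n,y_n)\le\sigma_*^\circ(x_n,y_n)$ because $\cS^\circ\subseteq\cS$, letting $\eps\downarrow 0$ gives $\limsup_n\tau_n\le\sigma_*^\circ(x,y)=\tau$, whereas on $\{\tau=+\infty\}$ the liminf step already forces $\tau_n\to+\infty$. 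In both cases $\tau_n\to\tau$ a.s., proving \eqref{c-ht}. The final claim for $(x,y)\in\partial\cC\setminus(0,y^*_0)$ is then immediate from Propositions \ref{prop:reg1} and \ref{lem:reg}, which give $\OP_{x,y}(\sigma_*=0)=1$ and hence $\tau=0$. The only point deserving care is the use of path-continuity simultaneously for the two semicontinuity arguments, which is precisely why the explicit representation \eqref{A} and the identity $\tau_*=\sigma_*^\circ$ from Corollary \ref{cor:ht} are both essential; without the latter, the limsup bound would fail in general at points of $\partial\cC$.
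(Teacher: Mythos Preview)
Your proof is correct and follows essentially the same approach as the paper's. Both arguments establish the liminf bound via closedness of $\cS$ and the limsup bound via openness of $\cS^\circ$, relying crucially on Corollary~\ref{cor:ht} to identify $\tau_*=\sigma_*=\sigma_*^\circ$; the only cosmetic difference is that the paper phrases the two steps in terms of the continuous boundary $b(\cdot)$ from Proposition~\ref{prop:bf} (i.e.\ the inequalities $b(\YY_t)-\XX_t>c$ and $b(\YY_t)<\XX_t$), whereas you work abstractly with the sets $\cS$ and $\cS^\circ$, which amounts to the same thing.
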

\begin{proof}
Let us fix $(x,y)\in[0,+\infty)\times\R$. For simplicity, in the rest of this proof all stopping times depending on $(x_n,y_n)$ will be denoted by $\tau_n$, $\sigma_n$ or $\sigma^\circ_n$ whereas those depending on $(x,y)$ will be denoted by $\tau$, $\sigma$ or $\sigma^\circ$, as appropriate. 
\vspace{+4pt}

\emph{Step 1.} [\emph{Lower semi-continuity}.] Here we show that 
\begin{align}\label{cht0}
\liminf_{n\to \infty}\tau_n\ge \tau,\quad\OP-a.s.
\end{align}
Fix $\omega\in\Omega$ outside of a null set. Then, if $\tau(\omega)=0$ the result is trivial. Assume $\tau(\omega)>\delta$ for some $\delta>0$. Then, recalling that the boundary is continuous (Proposition \ref{prop:bf}) there exists $c_{\delta,\omega}>0$ such that
\begin{align}
b(\YY^{x,y}_t(\omega))-\XX^x_t(\omega)> c_{\delta,\omega},\qquad\text{for all $t\in[0,\delta]$.}
\end{align}
Notice that the map $(t,x',y')\mapsto b(\YY^{x',y'}_t(\omega))-\XX^{x'}_t(\omega)$ is uniformly continuous on any compact $[0,\delta]\times K$. Then we can find $\overline n_\omega\ge 1$ sufficiently large that, for all $n\ge \overline n_\omega$ 
\begin{align}\label{cht1}
b(\YY^{x_n,y_n}_t(\omega))-\XX^{x_n}_t(\omega)> c_{\delta,\omega},\qquad\text{for all $t\in[0,\delta]$}.
\end{align}
Hence \eqref{cht1} implies $\liminf_{n\to\infty}\tau_n(\omega)\ge \delta$. Since $\omega,\delta$ were arbitrary, we obtain \eqref{cht0}.

\emph{Step 2.} [\emph{Upper semi-continuity}.] Here we show that 
\begin{align}\label{cht2}
\limsup_{n\to \infty}\sigma^\circ_n\le \sigma^\circ,\quad\OP-a.s.
\end{align}
Fix $\omega\in\Omega$ outside of a null set. Then, if $\sigma^\circ(\omega)=+\infty$ the result is trivial. Assume $\sigma^\circ(\omega)<\delta$ for some $\delta>0$. Then, recalling that the boundary is continuous (Proposition \ref{prop:bf}) there exists $t\le\delta$ such that
\begin{align*}
b(\YY^{x,y}_t(\omega))<\XX^x_t(\omega).
\end{align*}
By continuity of $(x',y')\mapsto b(\YY^{x',y'}_t(\omega))-\XX^{x'}_t(\omega)$ we can find  $\overline n_\omega\ge 1$ sufficiently large that, for all $n\ge \overline n_\omega$
\begin{align*}
b(\YY^{x_n,y_n}_t(\omega))<\XX^{x_n}_t(\omega).
\end{align*}
Hence $\limsup_{n\to\infty}\sigma^\circ_n\le \delta$. Since $\omega,\delta$ were arbitrary \eqref{cht2} follows.
\vspace{+4pt}

Combining step 1 and 2 above with Corollary \ref{cor:ht} we obtain \eqref{c-ht}.
\end{proof}

In order to finally prove that $\UU\in C^1((0,+\infty)\times\R)$ we would like to have a fully probabilistic representation of $\nabla_{x,y}\UU$. If obtaining $\UU_y$ in \eqref{UUy} was relatively easy, we now need more care for $\UU_x$. First of all, recalling the explicit dynamics of $(\XX,\YY,A)$ from \eqref{A}, \eqref{XX2} and \eqref{YY2}, and denoting by $\partial_x^+$ and $\partial_x^-$ the right and left partial derivatives with respect to $x$, we observe that for all $(x,y)\in(0,+\infty)\times\R$ and $t\ge 0$
\begin{align}
\label{dxX}&\partial^+_x\XX^x_t=\mathds{1}_{\{x\ge S^{\mu_0,\sigma}_t\}}\,, \qquad \partial^-_x\XX^x_t=\mathds{1}_{\{x> S^{\mu_0,\sigma}_t\}}\,,\\[+4pt]
\label{dxY}&\partial^+_x\YY^{x,y}_t=\partial^+_xA^{x}_t=-\mathds{1}_{\{x< S^{\mu_0,\sigma}_t\}}\,, \quad \partial^-_x\YY^{x,y}_t=\partial^-_xA^{x}_t=-\mathds{1}_{\{x\le S^{\mu_0,\sigma}_t\}},
\end{align}
where we also recall the notation $S^{\mu_0,\sigma}_t=\sup_{0\le s\le t}(-\mu_0 s-\sigma \OW_s)$.

Recalling $y^*_0$ from \eqref{y^*_0} and $\tau_0=\inf\{t\ge0\,:\,\XX_t=0\}$, the same rationale as in \eqref{Pss1} and Corollary \ref{cor:ht} give
\begin{align}\label{pt0t*}
\OP_{x,y}(S^{\mu_0,\sigma}_{\tau_*}=x)=\OP_{x,y}(\tau_*=\tau_0)=\OP_{x,y}((\XX_{\tau_*},\YY_{\tau_*})=(0,y^*_0))=0 
\end{align}
for any $(x,y)\in([0,+\infty)\times\R)\setminus(0,y^*_0)$. Then for all $(x,y)\in(0,+\infty)\times\R$ and $\OP$-a.s.~we have 
\begin{align}
\label{dxX1}&\partial^+_x\XX^x_{\tau_*}=\partial^-_x\XX^x_{\tau_*}=\mathds{1}_{\{x\ge S^{\mu_0,\sigma}_{\tau_*}\}}\,,\\[+4pt]
\label{dxY1}&\partial^+_x\YY^{x,y}_{\tau_*}=\partial^-_x\YY^{x,y}_{\tau_*}=\partial^+_xA^{x}_{\tau_*}=\partial^-_xA^{x}_{\tau_*}=-\mathds{1}_{\{x\le S^{\mu_0,\sigma}_{\tau_*}\}}.
\end{align}

Let us now obtain the probabilistic representation of $\UU_x$.
\begin{lemma}\label{lem:UUx}
For all $(x,y)\in((0,+\infty)\times\R)\setminus\partial\cC$ we have
\begin{align}\label{UUx}
\UU_x&(x,y)\nonumber\\[+4pt]
=&-\tfrac{2\mu_0}{\sigma^2}\OE_{x,y}\left[\mathds{1}_{\{\tau_*<+\infty\}}\mathds{1}_{\{x\le S^{\mu_0,\sigma}_{\tau_*}\}}e^{\frac{2\mu_0}{\sigma^2}A_{\tau_*}-\rho\tau_*}\left(1+e^{\frac{\theta}{\sigma}(\XX_{\tau_*}+\YY_{\tau_*})}\right)\right]\\[+4pt]
&+\tfrac{\theta}{\sigma}\OE_{x,y}\left[\mathds{1}_{\{\tau_*<+\infty\}}\left(\mathds{1}_{\{x\ge S^{\mu_0,\sigma}_{\tau_*}\}}-\mathds{1}_{\{x\le S^{\mu_0,\sigma}_{\tau_*}\}}\right)e^{\frac{2\mu_0}{\sigma^2}A_{\tau_*}-\rho\tau_*}e^{\frac{\theta}{\sigma}(\XX_{\tau_*}+\YY_{\tau_*})}\right].\nonumber
\end{align}
\end{lemma}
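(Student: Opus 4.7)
The plan is to adapt the proof of Lemma \ref{lem:UUy} to the $x$-variable; the crucial new feature is that $x$ enters \emph{all three} processes $\XX^x$, $A^x$ and $\YY^{x,y}$ through the explicit reflection formula \eqref{A}, so differentiation produces both a reflection-rate contribution (accounting for the dependence of $A^x_\tau$ on $x$) and a state-space displacement contribution (accounting for the dependence of $\XX^x_\tau$ and $\YY^{x,y}_\tau$ on $x$), as dictated by \eqref{dxX1}--\eqref{dxY1}. For $(x,y)\in\cS^\circ$ the identity \eqref{UUx} is immediate: $\OP_{x,y}(\tau_*=0)=1$ and $S^{\mu_0,\sigma}_0=0<x$ force the first summand on the right-hand side to vanish and the second to collapse to $\tfrac{\theta}{\sigma}e^{\frac{\theta}{\sigma}(x+y)}=g_x(x,y)=\UU_x(x,y)$.

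For $(x,y)\in\cC$ I would fix $\tau:=\tau_*(x,y)$ and replay the argument leading to \eqref{UUy-1}--\eqref{UUy-2}: combining the supermartingale property \eqref{supermg} applied to $\UU$ at $(\XX^{x\pm\eps},\YY^{x\pm\eps,y})$ with the martingale identity \eqref{mg} at $(\XX^{x},\YY^{x,y})$, and using $\UU\ge g$, one obtains for every $t\ge 0$
\begin{align*}
\UU(x+\eps,y)-\UU(x,y)\ge\;&\OE\Big[\mathds{1}_{\{\tau\le t\}}\big(e^{\frac{2\mu_0}{\sigma^2}A^{x+\eps}_{\tau}-\rho\tau}g(\XX^{x+\eps}_{\tau},\YY^{x+\eps,y}_{\tau})-e^{\frac{2\mu_0}{\sigma^2}A^{x}_{\tau}-\rho\tau}g(\XX^{x}_{\tau},\YY^{x,y}_{\tau})\big)\Big]\\
&+\OE\Big[\mathds{1}_{\{\tau>t\}}\big(e^{\frac{2\mu_0}{\sigma^2}A^{x+\eps}_{t}-\rho t}\UU(\XX^{x+\eps}_{t},\YY^{x+\eps,y}_{t})-e^{\frac{2\mu_0}{\sigma^2}A^{x}_{t}-\rho t}\UU(\XX^{x}_{t},\YY^{x,y}_{t})\big)\Big],
\end{align*}
and the second expectation vanishes as $t\to\infty$ uniformly in small $\eps>0$ thanks to \eqref{sublin} and the transversality estimate \eqref{tran-b}, leaving only the first expectation restricted to $\{\tau<\infty\}$ in the limit. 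A symmetric argument, with $\tau$ now serving as a sub-optimal stopping time for $\UU(x-\eps,y)$, yields the matching upper bound on $\UU(x,y)-\UU(x-\eps,y)$.

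Next I would divide both bounds by $\eps$, let $\eps\to 0^+$, and push the limit inside the expectation by dominated convergence; the required Lipschitz estimates $|A^{x+\eps}_\tau-A^x_\tau|\le\eps$ and $|\XX^{x+\eps}_\tau-\XX^x_\tau|\le\eps$ are immediate from \eqref{A}. The chain rule together with \eqref{dxX1}--\eqref{dxY1} then gives
\begin{align*}
\partial_x\!\big[e^{\frac{2\mu_0}{\sigma^2}A^x_\tau-\rho\tau}g(\XX^x_\tau,\YY^{x,y}_\tau)\big]=\;&e^{\frac{2\mu_0}{\sigma^2}A^x_\tau-\rho\tau}\Big[-\tfrac{2\mu_0}{\sigma^2}\mathds{1}_{\{x\le S^{\mu_0,\sigma}_{\tau}\}}g(\XX^x_\tau,\YY^{x,y}_\tau)\\
&+\tfrac{\theta}{\sigma}\big(\mathds{1}_{\{x\ge S^{\mu_0,\sigma}_{\tau}\}}-\mathds{1}_{\{x\le S^{\mu_0,\sigma}_{\tau}\}}\big)e^{\frac{\theta}{\sigma}(\XX^x_\tau+\YY^{x,y}_\tau)}\Big],
\end{align*}
which upon substitution in the limit expectation is precisely the integrand on the right-hand side of \eqref{UUx}. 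Left and right $x$-derivatives coincide because \eqref{pt0t*} guarantees $\OP_{x,y}(S^{\mu_0,\sigma}_{\tau_*}=x)=0$, so the indicators with $\le/\ge$ and the corresponding ones with $</>$ agree almost surely at $\tau$.

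The hardest part will be producing an $\eps$-uniform integrable dominating function enabling both passages to the limit ($t\to\infty$ and then $\eps\to 0$), since the factor $e^{\frac{\theta}{\sigma}(\XX^x_\tau+\YY^{x,y}_\tau)}$ is not obviously integrable. The resolution relies on the algebraic identity $\tfrac{2\mu_0}{\sigma^2}+\tfrac{2\theta}{\sigma}=\tfrac{2\mu_1}{\sigma^2}$ already exploited in \eqref{sublin1}: under the Girsanov change of measure $d\P^\theta=e^{\theta\OW_t-\theta^2 t/2}d\OP$, the product $e^{\frac{2\mu_0}{\sigma^2}A^x_\tau-\rho\tau}e^{\frac{\theta}{\sigma}(\XX^x_\tau+\YY^{x,y}_\tau)}$ reduces, up to the deterministic prefactor $e^{\frac{\theta}{\sigma}(x+y)}$, to an expression of the form $e^{\frac{2\mu_1}{\sigma^2}A^x_\tau-\rho\tau}$ whose $\P^\theta$-expectation is controlled by \eqref{simple} with $\alpha=\mu_1$, exactly as in the proof of Proposition \ref{prop:OU1n}.
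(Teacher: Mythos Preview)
Your proposal is correct and follows essentially the same route as the paper: the (super)martingale inequalities \eqref{supermg}--\eqref{mg} applied at the same $\tau=\tau_*(x,y)$, the pathwise differentiation via \eqref{dxX1}--\eqref{dxY1}, and the Girsanov/\eqref{simple} argument for integrability are all exactly as in the paper's proof. The only difference is the order of the two limits: the paper divides by $\eps$ first (using \eqref{Lip-OU} to bound the derivative of the $\{\tau>t\}$-term and producing an explicit residual $r(t,x,y)$) and then sends $t\to\infty$, whereas you send $t\to\infty$ first and divide by $\eps$ afterwards. Your ordering is slightly cleaner in that you never need to differentiate $\UU$ itself---only $g$---so the Lipschitz bound \eqref{Lip-OU} is not required; conversely, the paper's ordering makes the structure of the residual term completely explicit. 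Both are valid and the conceptual content is identical.
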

\begin{proof}
The result is trivial for $(x,y)\in\cS^\circ$, because $\tau_*=0$. For $(x,y)\in\cC$, we recall that $\UU_x$ is well defined (Lemma \ref{lem:BVP1}), we take $\eps>0$ and denote by $\tau=\tau_*(x,y)$ the optimal stopping time for $\UU(x,y)$. For any $t>0$, using the (super)-martingale property \eqref{supermg}--\eqref{mg} we have that 
\begin{align}
\UU&(x+\eps,y)-\UU(x,y)\\[+4pt]
\ge&\,\OE\left[e^{\frac{2\mu_0}{\sigma^2}A^{x+\eps}_{\tau\wedge t}-\rho(\tau\wedge t)}\UU(\XX^{x+\eps}_{\tau\wedge t},\YY^{x+\eps,y}_{\tau\wedge t})-e^{\frac{2\mu_0}{\sigma^2}A^{x}_{\tau\wedge t}-\rho(\tau\wedge t)}\UU(\XX^{x}_{\tau\wedge t},\YY^{x,y}_{\tau\wedge t})\right]\nonumber\\[+4pt]
\ge&\,\OE\left[\mathds{1}_{\{\tau\le t\}}\left(e^{\frac{2\mu_0}{\sigma^2}A^{x+\eps}_{\tau}-\rho\tau}g(\XX^{x+\eps}_{\tau},\YY^{x+\eps,y}_{\tau})-e^{\frac{2\mu_0}{\sigma^2}A^{x}_{\tau}-\rho\tau}g(\XX^{x}_{\tau},\YY^{x,y}_{\tau})\right)\right]\nonumber\\[+4pt]
&+\OE\left[\mathds{1}_{\{\tau> t\}}\left(e^{\frac{2\mu_0}{\sigma^2}A^{x+\eps}_{t}-\rho t}\UU(\XX^{x+\eps}_{t},\YY^{x+\eps,y}_{t})-e^{\frac{2\mu_0}{\sigma^2}A^{x}_{t}-\rho t}\UU(\XX^{x}_{t},\YY^{x,y}_{t})\right)\right].\nonumber
\end{align}
Dividing the above expressions by $\eps$, letting $\eps\to0$ and using \eqref{Lip-OU} and the right derivatives in \eqref{dxX}, \eqref{dxY}, \eqref{dxX1} and \eqref{dxY1}, we find a lower bound for $\UU_x$, that is
\begin{align}\label{UUx1}
\UU_x(x,y)\ge&-\tfrac{2\mu_0}{\sigma^2}\OE\left[\mathds{1}_{\{\tau\le t\}}\mathds{1}_{\{x\le S^{\mu_0,\sigma}_{\tau}\}}e^{\frac{2\mu_0}{\sigma^2}A^x_{\tau}-\rho\tau}\left(1+e^{\frac{\theta}{\sigma}(\XX^x_{\tau}+\YY^{x,y}_{\tau})}\right)\right]\\[+4pt]
&+\tfrac{\theta}{\sigma}\OE\left[\mathds{1}_{\{\tau\le t\}}\left(\mathds{1}_{\{x\ge S^{\mu_0,\sigma}_{\tau}\}}-\mathds{1}_{\{x\le S^{\mu_0,\sigma}_{\tau}\}}\right)e^{\frac{2\mu_0}{\sigma^2}A^x_{\tau}-\rho\tau}e^{\frac{\theta}{\sigma}(\XX^x_{\tau}+\YY^{x,y}_{\tau})}\right]\nonumber\\[+4pt]
&+r(t,x,y),\nonumber
\end{align} 
where (notice that $c>0$ below is the same as in \eqref{Lip-OU})
\begin{align}
r(t,x,y)=&-\big|\tfrac{2\mu_0}{\sigma^2}\big|\OE\left[\mathds{1}_{\{\tau>t\}}\mathds{1}_{\{S^{\mu_0,\sigma}_t>x\}}e^{\frac{2\mu_0}{\sigma^2}A^x_t-\rho t}\UU(\XX^x_t,\YY^{x,y}_t)\right]\\[+4pt]
&+c\,\OE\left[\mathds{1}_{\{\tau>t\}}e^{\frac{2\mu_0}{\sigma^2}A^x_t-\rho t}g(\XX^x_t,\YY^{x,y}_t)\right]\nonumber\\[+4pt]
&+\tfrac{\theta}{\sigma}\OE\left[\mathds{1}_{\{\tau> t\}}\left(\mathds{1}_{\{x\ge S^{\mu_0,\sigma}_{t}\}}-\mathds{1}_{\{x\le S^{\mu_0,\sigma}_{t}\}}\right)e^{\frac{2\mu_0}{\sigma^2}A^x_{t}-\rho t}e^{\frac{\theta}{\sigma}(\XX^x_{t}+\YY^{x,y}_{t})}\right].\nonumber
\end{align}
Using \eqref{tran} for the three terms above, it is not hard to verify that $\lim_{t\to\infty}r(t,x,y)=0$ (notice that $\UU(x,y)\ge g(x,y)\ge e^{\frac{\theta}{\sigma}(x+y)}\ge 0$). Hence, taking limits as $t\to\infty$ in \eqref{UUx1} and recalling also \eqref{simple}, dominated convergence gives
\begin{align*}%\label{UUx2}
\UU_x(x,y)\ge&-\tfrac{2\mu_0}{\sigma^2}\OE\left[\mathds{1}_{\{\tau<\infty\}}\mathds{1}_{\{x\le S^{\mu_0,\sigma}_{\tau}\}}e^{\frac{2\mu_0}{\sigma^2}A^x_{\tau}-\rho\tau}\left(1+e^{\frac{\theta}{\sigma}(\XX^x_{\tau}+\YY^{x,y}_{\tau})}\right)\right]\\[+4pt]
&+\tfrac{\theta}{\sigma}\OE\left[\mathds{1}_{\{\tau<\infty\}}\left(\mathds{1}_{\{x\ge S^{\mu_0,\sigma}_{\tau}\}}-\mathds{1}_{\{x\le S^{\mu_0,\sigma}_{\tau}\}}\right)e^{\frac{2\mu_0}{\sigma^2}A^x_{\tau}-\rho\tau}e^{\frac{\theta}{\sigma}(\XX^x_{\tau}+\YY^{x,y}_{\tau})}\right].\nonumber
\end{align*} 

In order to obtain an upper bound for $\UU_x$ we can employ symmetric arguments, using again $\tau=\tau_*(x,y)$, to estimate $\eps^{-1}(\UU(x,y)-\UU(x-\eps,y))$. It is not hard to check that the upper bound is the same as the lower bound, hence \eqref{UUx} holds.
\end{proof}

Thanks to continuity of the optimal stopping times and to the probabilistic representations of $\UU_x$ and $\UU_y$ we can state our next result (see also \cite{DeAP18} for general results in this direction).
\begin{proposition}\label{prop:vC1}
Under Assumption \ref{ass:mu} we have $\UU\in C^1((0,\infty)\times\R)$. 
\end{proposition}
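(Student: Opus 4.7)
The proposition reduces to proving that $\UU_x$ and $\UU_y$ extend continuously across $\partial\cC\cap((0,+\infty)\times\R)$, because $\UU\in C^{1,2}(\cC)$ by Lemma \ref{lem:BVP1} and $\UU\equiv g$ on $\cS^\circ$, with $g_x=g_y=\tfrac{\theta}{\sigma}e^{\tfrac{\theta}{\sigma}(x+y)}$ smooth and providing the correct limits from the $\cS^\circ$-side. To identify the candidate boundary value from the $\cC$-side, fix $(x_0,y_0)\in\partial\cC$ with $x_0>0$. By Corollary \ref{cor:ht} we have $\tau_*=0$ $\OP_{x_0,y_0}$-a.s. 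Substituting $\tau_*=0$ into the representations \eqref{UUy} and \eqref{UUx}, and noting that $A_0=0$, $\XX_0=x_0$, $\YY_0=y_0$ and $S^{\mu_0,\sigma}_0=0<x_0$ (so that $\mathds{1}_{\{x_0\le 0\}}=0$ and $\mathds{1}_{\{x_0\ge 0\}}=1$), both formulas collapse to the single value $\tfrac{\theta}{\sigma}e^{\tfrac{\theta}{\sigma}(x_0+y_0)}=g_x(x_0,y_0)=g_y(x_0,y_0)$, the natural candidate.

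The plan is then to verify continuity at $(x_0,y_0)$ from within $\cC$. Take any sequence $(x_n,y_n)\to(x_0,y_0)$ with $(x_n,y_n)\in\cC$ and set $\tau_n:=\tau_*(x_n,y_n)$. Proposition \ref{prop:c-ht} yields $\tau_n\to 0$ $\OP$-a.s., so path-continuity of $(\XX,\YY,A,S^{\mu_0,\sigma})$ at $t=0$ implies that all continuous factors in the integrands of \eqref{UUy}--\eqref{UUx} converge $\OP$-a.s.~to their limiting values. Since $x_n\to x_0>0$ and $S^{\mu_0,\sigma}_{\tau_n}\to 0$, the indicators stabilise to $\mathds{1}_{\{x_n\le S^{\mu_0,\sigma}_{\tau_n}\}}\to 0$ and $\mathds{1}_{\{x_n\ge S^{\mu_0,\sigma}_{\tau_n}\}}\to 1$ $\OP$-a.s., so each integrand converges a.s.~to $\tfrac{\theta}{\sigma}e^{\tfrac{\theta}{\sigma}(x_0+y_0)}$. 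Sequences approaching $(x_0,y_0)$ from $\cS^\circ$ present no difficulty since $\UU\equiv g$ there.

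The main obstacle is the interchange of limit and expectation in \eqref{UUx}, whose integrand carries the factor $\PH^{x_n,\varphi_n}_{\tau_n}$ that is not uniformly bounded in $\omega$. To deal with this I plan to invoke Vitali's convergence theorem. Using the identity $\PH_t=e^{\tfrac{\theta}{\sigma}(\XX_t+\YY_t)}$ (cf.~\eqref{YY}), the explicit form \eqref{PH-OP-dyn} of $\PH$, the pathwise bound $A^{x_n}_t\le A^{x_0/2}_t$ valid for all large $n$, and a Girsanov shift $d\P^{p\theta}=e^{p\theta\OW_t-\tfrac{p^2\theta^2}{2}t}d\OP$ with $p=1+\varepsilon$ for $\varepsilon>0$ small enough that $\varepsilon\theta^2/2<\rho$, the $L^{1+\varepsilon}$-norm of the integrand is reduced to an exponential moment of the form $\E^{p\theta}[e^{\tfrac{2p\mu_1}{\sigma^2}S^{\hat\mu_p,\sigma}_{\tau_n}-\delta\tau_n}]$ with $\delta>0$ and $\hat\mu_p:=\mu_0+p(\mu_1-\mu_0)>0$; uniform boundedness in $\tau_n$ follows by the same argument underlying \eqref{simple} (already exploited in Propositions \ref{prop:OU1n}--\ref{prop:OUn-c}). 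This delivers $\sup_n\OE[|X_n|^{1+\varepsilon}]<\infty$ and hence uniform integrability; combined with the $\OP$-a.s.~convergence above, Vitali's theorem gives $\UU_x(x_n,y_n)\to g_x(x_0,y_0)$ and $\UU_y(x_n,y_n)\to g_y(x_0,y_0)$. This extends $\UU_x$ and $\UU_y$ continuously across the boundary, proving $\UU\in C^1((0,+\infty)\times\R)$.
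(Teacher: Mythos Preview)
Your proposal is correct and the overall architecture---reduce to continuity of $\UU_x,\UU_y$ across $\partial\cC$, invoke the probabilistic representations \eqref{UUy}--\eqref{UUx}, use $\tau_n\to 0$ from Proposition~\ref{prop:c-ht}, and identify the limit as $g_x=g_y$---matches the paper exactly. The one genuine difference is in how you justify passing the limit inside the expectation in \eqref{UUx}. The paper does \emph{not} establish $L^{1+\varepsilon}$-uniform integrability. Instead it fixes a deterministic $t>0$ and, via the tower and Markov properties, rewrites $\UU_x(x_n,y_n)$ as a sum of two pieces: one carrying $\mathds{1}_{\{\tau_n\le t\}}$ (the integrand is then evaluated at $\tau_n\in[0,t]$ and dominated for fixed $t$) and one carrying $\mathds{1}_{\{\tau_n>t\}}e^{\frac{2\mu_0}{\sigma^2}A_t-\rho t}\UU_x(\XX_t,\YY_t)$ (controlled via \eqref{Lip-OU} and \eqref{simple}). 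Dominated convergence then applies directly, and the second piece vanishes because $\mathds{1}_{\{\tau_n>t\}}\to 0$.

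Your Vitali route is legitimate and arguably more transparent once the $L^{1+\varepsilon}$ bound is secured; it also isolates the obstacle as the tail of $\PH_{\tau_n}$. The cost is the extra bookkeeping needed to run Girsanov with a potentially unbounded stopping time (truncate at $T$, bound uniformly in $T$ via a \eqref{simple}-type estimate, then let $T\to\infty$), and a mild mismatch in applying \eqref{simple}: the coefficient $p\mu_1$ in the exponential of $A$ does not equal the drift $\hat\mu_p$ of the reflected process under $\P^{p\theta}$, though since $\hat\mu_p-p\mu_1=(1-p)\mu_0>0$ for $p>1$ the bound still goes through. The paper's cut-at-time-$t$ trick sidesteps both issues by reducing everything to a fixed finite horizon.
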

\begin{proof}
Trivially $\UU\in C^1$ in $\cS^\circ$ and moreover $\UU\in C^1$ in $\cC\setminus(\{0\}\times\R)$, due to Lemma \ref{lem:BVP1}. It only remains to prove that $\nabla_{x,y}\UU$ is continuous across the boundary $\partial\cC$. Let us consider the case of $\UU_x$, as the proof for $\UU_y$ follows the same arguments.

Let $(x_0,y_0)\in\partial\cC$, with $x_0>0$, and let $(x_n,y_n)_{n\ge 1}$ be a sequence in $\cC$ converging to $(x_0,y_0)$ as $n\to\infty$. Thanks to Proposition \ref{prop:c-ht} we have that $\tau_*(x_n,y_n)\to \tau_*(x_0,y_0)=0$, $\OP$-a.s.~as $n\to\infty$. To simplify notation let $\tau_n:=\tau_*(x_n,y_n)$. 

Let $t>0$ be given and notice that on $\{\tau_n>t\}$ one has $(\XX_t,\YY_t)\in\cC$, $\OP_{x_n,y_n}$-a.s.~so that $\UU_x(\XX^{x_n}_t,\YY^{x_n,y_n}_t)$ may be represented by using \eqref{UUx}. Hence, tower property of conditional expectation and Markov property allow us to write \eqref{UUx} as 
\begin{align}\label{UUx2}
\UU_x&(x_n,y_n)\nonumber\\[+4pt]
=&-\tfrac{2\mu_0}{\sigma^2}\OE_{x,y}\left[\mathds{1}_{\{\tau_n\le t\}}\mathds{1}_{\{x_n\le S^{\mu_0,\sigma}_{\tau_n}\}}e^{\frac{2\mu_0}{\sigma^2}A_{\tau_n}-\rho\tau_n}\left(1+e^{\frac{\theta}{\sigma}(\XX_{\tau_n}+\YY_{\tau_n})}\right)\right]\\[+4pt]
&+\tfrac{\theta}{\sigma}\OE_{x,y}\left[\mathds{1}_{\{\tau_n\le t\}}\left(\mathds{1}_{\{x_n\ge S^{\mu_0,\sigma}_{\tau_n}\}}-\mathds{1}_{\{x_n\le S^{\mu_0,\sigma}_{\tau_n}\}}\right)e^{\frac{2\mu_0}{\sigma^2}A_{\tau_n}-\rho\tau_n}e^{\frac{\theta}{\sigma}(\XX_{\tau_n}+\YY_{\tau_n})}\right]\nonumber\\[+4pt]
&+\OE_{x_n,y_n}\left[\mathds{1}_{\{\tau_n>t\}}e^{\frac{2\mu_0}{\sigma^2}A_{t}-\rho t}\UU_x(\XX_t,\YY_t)\right].\nonumber
\end{align}

Now we want to take limits as $n\to\infty$ and use that $\tau_n\to 0$ in \eqref{UUx2} to show that $\UU_x(x_n,y_n)\to g_x(x_0,y_0)$. For that, first notice that $x\mapsto \mathds{1}_{\{x\le0\}}$ and $x\mapsto \mathds{1}_{\{x\ge0\}}$ are continuous on $(-\infty,0)$ and in particular at $-x_0$. Since we also have 
\[
\lim_{n\to\infty}S^{\mu_0,\sigma}_{\tau_n}-x_n\to-x_0<0,
\]
then $\OP$-a.s.~we have 
\[
\lim_{n\to\infty}\mathds{1}_{\{S^{\mu_0,\sigma}_{\tau_n}-x_n\ge 0\}}=0\quad\text{and}\quad\lim_{n\to\infty}\mathds{1}_{\{S^{\mu_0,\sigma}_{\tau_n}-x_n\le 0\}}= 1.
\]
Moreover, thanks to \eqref{Lip-OU} and \eqref{simple} we can invoke dominated convergence to take limits inside expectations in \eqref{UUx2}. This gives
\begin{align}
\lim_{n\to\infty}\UU_x(x_n,y_n)=\tfrac{\theta}{\sigma}\exp{\tfrac{\theta}{\sigma}(x_0+y_0)}=g_x(x_0,y_0)
\end{align}  
where we also used that $\lim_{n\to\infty}\mathds{1}_{\{\tau_n>t\}}=0$.

By arbitrariness of $(x_0,y_0)$ and of the sequence $(x_n,y_n)$ we conclude that $\UU_x$ is continuous across $\partial\cC\setminus(0,y^*_0)$. Similar arguments, applied to \eqref{UUy}, allow to show that $\UU_y$ is continuous across $\partial\cC\setminus(0,y^*_0)$ as well.
\end{proof}
Our proposition above has a simple corollary. Recall that $\overline{\cC}$ is the closure of $\cC$.
\begin{corollary}\label{cor:vC1}
Under Assumption \ref{ass:mu} we have $\OU\in C^1((0,+\infty)^2)$ and $U\in C^1((0,+\infty)\times(0,1))$. Moreover $\UU_{xx}$ is continuous on $\overline{\cC}\setminus(0,y^*_0)$ with 
\begin{align}\label{Uxx}
\UU_{xx}(x,y)=\tfrac{2\rho}{\sigma^2}g(x,y)+g_{xx}(x,y),\quad\text{for all $(x,y)\in\partial\cC\setminus(0,y^*_0)$}.
\end{align}
\end{corollary}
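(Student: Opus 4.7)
For the first two claims, note that $\Psi:(x,y)\mapsto\bigl(x,e^{(\theta/\sigma)(x+y)}\bigr)$ is a smooth diffeomorphism from $(0,+\infty)\times\R$ onto $(0,+\infty)^2$, and by \eqref{OU-UU} we have $\UU=\OU\circ\Psi$. Since $\UU\in C^1((0,+\infty)\times\R)$ by Proposition \ref{prop:vC1}, the chain rule applied to $\OU=\UU\circ\Psi^{-1}$ yields $\OU\in C^1((0,+\infty)^2)$. Likewise, \eqref{U-OU} gives $U(x,\pi)=(1-\pi)\OU\bigl(x,\pi/(1-\pi)\bigr)$ for $(x,\pi)\in(0,+\infty)\times(0,1)$, and since $\pi\mapsto\pi/(1-\pi)$ is a smooth diffeomorphism from $(0,1)$ onto $(0,+\infty)$, we conclude $U\in C^1((0,+\infty)\times(0,1))$.

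For the continuity of $\UU_{xx}$ on $\overline{\cC}\setminus(0,y^*_0)$, understood as the continuous extension of $\UU_{xx}$ from $\cC$ up to its closure, my plan is to use Lemma \ref{lem:BVP1}. Inside $\cC\cap((0,+\infty)\times\R)$ we have $(\cL_{X,Y}-\rho)\UU=0$, and solving for the second derivative gives
\begin{equation}\label{eq:uxx-plan}
\tfrac{\sigma^2}{2}\UU_{xx}(x,y)=\rho\,\UU(x,y)+\tfrac{1}{2}(\mu_0+\mu_1)\UU_y(x,y)-\mu_0\UU_x(x,y).
\end{equation}
By Proposition \ref{prop:vC1}, the right-hand side of \eqref{eq:uxx-plan} is continuous on all of $(0,+\infty)\times\R$, so it provides a continuous extension of $\UU_{xx}$ from $\cC$ up to $\partial\cC\setminus(0,y^*_0)$.

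The boundary formula \eqref{Uxx} then reduces to the observation that $\cL_{X,Y}g\equiv 0$. A direct computation using $\theta=(\mu_1-\mu_0)/\sigma$ gives
\[
(\cL_{X,Y}g)(x,y)=e^{(\theta/\sigma)(x+y)}\Big(-\tfrac{\theta}{2\sigma}(\mu_0+\mu_1)+\tfrac{\theta^2}{2}+\tfrac{\mu_0\theta}{\sigma}\Big)=0,
\]
and hence $\tfrac{\sigma^2}{2}g_{xx}=\tfrac{1}{2}(\mu_0+\mu_1)g_y-\mu_0 g_x$. At any $(x,y)\in\partial\cC\setminus(0,y^*_0)$ we have $\UU=g$, $\UU_x=g_x$ and $\UU_y=g_y$ by the $C^1$-smooth fit of Proposition \ref{prop:vC1}. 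Plugging these into \eqref{eq:uxx-plan} gives $\tfrac{\sigma^2}{2}\UU_{xx}=\rho g+\tfrac{\sigma^2}{2}g_{xx}$, which is \eqref{Uxx}. No step is delicate: the only conceptual point is the identity $\cL_{X,Y}g=0$, which explains why the natural extension of $\UU_{xx}$ from $\cC$ via the PDE differs from its value $g_{xx}$ on $\cS^\circ$ precisely by $\tfrac{2\rho}{\sigma^2}g$.
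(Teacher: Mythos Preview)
Your proof is correct and follows essentially the same approach as the paper. The paper phrases the second part in terms of $\uu=\UU-g$ and the PDE $(\cL_{X,Y}-\rho)\uu=\rho g$ in $\cC$ (cf.~\eqref{Lu}), taking limits and using $\uu=\uu_x=\uu_y=0$ on $\partial\cC\setminus(0,y^*_0)$; this is equivalent to your computation with $\UU$ directly, since your identity $\cL_{X,Y}g=0$ is precisely what turns $(\cL_{X,Y}-\rho)\UU=0$ into $(\cL_{X,Y}-\rho)\uu=\rho g$.
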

\begin{proof}
The first claim follows from Proposition \ref{prop:vC1}, \eqref{OU-UU} and \eqref{U-OU}. For the second claim we need  
\begin{align*}
\tfrac{\sigma^2}{2}\uu_{xx}+\mu_0\uu_x-\tfrac{1}{2}(\mu_0+\mu_1)\uu_y-\rho\uu=\rho g,\quad\text{in $\cC$},
\end{align*}
where $\uu=\UU-g$. Then \eqref{Uxx} follows by taking limits as $\cC\ni(x,y)\to (x_0,y_0)\in\partial\cC$, with $x_0>0$, and using $\uu_x=\uu_y=\uu=0$ on $\partial \cC\setminus(0,y^*_0)$.
\end{proof}
\begin{remark}\label{rem:reg}
Notice that due to internal regularity results for parabolic PDEs (cf.~\cite[Ch.~3, Thm.~10]{Fri}), and thanks to Lemma \ref{lem:BVP1}, we know that $\UU\in C^\infty$ in $\cC\setminus(\{0\}\times\R)$. This implies that also $\OU$ and $U$ belong to $C^\infty$ in $\cC\setminus(\{0\}\times\R)$. 
\end{remark}

\subsection{Reflection, creation and inverse of the boundary}\label{sec:refl-cr}

Recall that we conjectured that the boundary condition \eqref{HJBu3} holds for $U$ in \eqref{U}. We will now verify that this is indeed true, provided that we understand it in the limit as $x\downarrow 0$, for each given $\pi\in(0,1)$. Let us start by recalling that \eqref{U-OU} holds with $\varphi=\pi/(1-\pi)$.
%\begin{align}\label{U-Ubar}
%\OU(x,\varphi)=(1+\varphi)U(x,\varphi/(1+\varphi)).
%\end{align} 
Then, thanks to Remark \ref{rem:reg}, $U$ satisfies 
\begin{align}\label{HJBu3b}
\tfrac{\sigma^2}{2}U(0+,\pi)+&\,\sigma\theta\pi(1-\pi)U_\pi(0+,\pi)\\[+3pt]
+&\,(\mu_0+\hat\mu\pi)U(0+,\pi)=0\quad\text{for $\pi\in(0,1)$ s.t.~$(0,\pi)\in\cC$}\nonumber
\end{align}
if and only if 
\begin{align}\label{HJBu4}
\tfrac{\sigma^2}{2}\OU_x(0+,\varphi)\!+\!\hat \mu\varphi\OU(0+,\varphi)\!+\!\mu_0\OU(0+,\varphi)=0,\:\:\:\text{for all $\varphi\!>\!0$ s.t.~$(0,\varphi)\!\in\!\cC$}.
\end{align}
Further, recalling that $\UU(x,y)=\OU(x,\exp\tfrac{\theta}{\sigma}(x+y))$, then \eqref{HJBu4} holds if an only if
\begin{align}\label{HJBu5}
\tfrac{\sigma^2}{2}(\UU_x+\UU_y)(0+,y)+\mu_0\UU(0+,y)=0, \quad\text{for all $y\in\R$ s.t.~$(0,y)\in\cC$.}
\end{align}
The boundary condition \eqref{HJBu3b} is what we refer to as {\em reflection and creation} condition. Notice that $\{y\in\R\,:\,(0,y)\in\cC\}\neq\varnothing$ was proven in Proposition \ref{prop:notempty}. 

\begin{proposition}\label{prop:b-cond}
The boundary condition \eqref{HJBu3b} holds.
\end{proposition}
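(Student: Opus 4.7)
The plan is to derive \eqref{HJBu3b}, or equivalently \eqref{HJBu5}, by passing to the limit $x\downarrow 0$ in the probabilistic representations \eqref{UUx} and \eqref{UUy} of $\UU_x$ and $\UU_y$. This side-steps any attempt at applying It\^o's formula across the reflecting boundary $\{x=0\}$ and instead exploits identities that are already at hand.

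Fix $y_0\in\R$ with $(0,y_0)\in\cC$. Since $\cC$ is open (Corollary~\ref{cor:CS1}) there exists $\delta>0$ such that $[0,\delta)\times\{y_0\}\subset\cC$, so \eqref{UUx} and \eqref{UUy} apply at every $(x,y_0)$ with $x\in(0,\delta)$. The key step will be to establish the $\OP$-a.s.\ limits
\begin{align*}
\lim_{x\downarrow 0}\mathds{1}_{\{x\le S^{\mu_0,\sigma}_{\tau_*(x,y_0)}\}}=1,\qquad \lim_{x\downarrow 0}\mathds{1}_{\{x\ge S^{\mu_0,\sigma}_{\tau_*(x,y_0)}\}}=0.
\end{align*}
These will follow from: (i) $\tau_*(0,y_0)>0$ $\OP$-a.s., because $(0,y_0)$ lies in the open set $\cC$ and paths of $(\XX,\YY)$ are continuous; (ii) $S^{\mu_0,\sigma}_t>0$ for every $t>0$ $\OP$-a.s., a standard Brownian fact; and (iii) $\tau_*(x,y_0)\to\tau_*(0,y_0)$ $\OP$-a.s.\ as $x\downarrow 0$, by Proposition~\ref{prop:c-ht}, combined with continuity of $t\mapsto S^{\mu_0,\sigma}_t$.

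Once these pointwise limits are in place, dominated convergence will let me pass to the limit $x\downarrow 0$ inside \eqref{UUx} and \eqref{UUy}; an integrable envelope is provided by \eqref{simple} after the measure change $d\P^\theta=e^{\theta\OW_t-\theta^2 t/2}d\OP$, exactly as in the proof of Proposition~\ref{prop:OU1n}. Comparing the two resulting limits with the representation \eqref{opt-1} of $\UU(0,y_0)$ and using $\PH_{\tau_*}=\exp[\tfrac{\theta}{\sigma}(\XX_{\tau_*}+\YY_{\tau_*})]$, the expectation defining $\UU_y(0+,y_0)$ coincides with one of the two expectations in $\UU_x(0+,y_0)$ and cancels upon summation, while the remaining term contributes $-\tfrac{2\mu_0}{\sigma^2}\UU(0,y_0)$. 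This gives
\begin{align*}
\UU_x(0+,y_0)+\UU_y(0+,y_0)=-\tfrac{2\mu_0}{\sigma^2}\UU(0,y_0),
\end{align*}
which, after multiplication by $\sigma^2/2$, is \eqref{HJBu5} and hence \eqref{HJBu3b}.

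I expect the main obstacle to be the $\OP$-a.s.\ identification of the two indicator limits under the random, $x$-dependent stopping time $\tau_*(x,y_0)$; once that is secured, the rest of the argument reduces to dominated convergence and simple algebra.
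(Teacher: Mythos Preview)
Your proposal is correct and follows essentially the same route as the paper: take limits as $x\downarrow 0$ in the probabilistic representations \eqref{UUx} and \eqref{UUy}, use Proposition~\ref{prop:c-ht} together with $S^{\mu_0,\sigma}_{\tau_*(0,y_0)}>0$ $\OP$-a.s.\ (equivalently \eqref{pt0t*}) to handle the indicator limits, invoke dominated convergence via \eqref{simple}, and combine the results with \eqref{opt-1} to obtain \eqref{HJBu5}. The paper additionally notes the monotonicity $\tau_*(x_n,y)\uparrow\tau_*(0,y)$ to simplify the dominated-convergence envelope, but this is a minor refinement of the same argument.
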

\begin{proof}
We will prove \eqref{HJBu5}. Fix $y\in\R$ with $(0,y)\in\cC$ and take a sequence $x_n\downarrow 0$ as $n\to\infty$. Notice that $\XX^{x_n}$ is decreasing in $n$ whereas $\YY^{x_n,y}$ is increasing in $n$ thanks to \eqref{dxX1} and \eqref{dxY1}. Then, by Proposition \ref{prop:c-ht} and the geometry of $\cS$ we have $\tau_*(x_n,y)\uparrow \tau_*(0,y)$, $\OP$-a.s. For simplicity we denote $\tau_n=\tau_*(x_n,y)$ and $\tau_\infty=\tau_*(0,y)$.

The idea is simply to take limits in the expressions of $\UU_x$ and $\UU_y$ (see \eqref{UUx} and \eqref{UUy}). 
For \eqref{UUx} we notice that $S^{\mu_0,\sigma}_{\tau_n}-x_n\uparrow S^{\mu_0,\sigma}_{\tau_\infty}$ as $n\to \infty$, and recall that $\OP(S^{\mu_0,\sigma}_{\tau_\infty}= 0)=0$ by \eqref{pt0t*}, since $y>y^*_0$. Then $\OP$-a.s.~we have
\begin{align}
\label{l1}&\lim_{n\to\infty}\mathds{1}_{\{S^{\mu_0,\sigma}_{\tau_n}-x_n\ge 0\}}=\mathds{1}_{\{S^{\mu_0,\sigma}_{\tau_\infty}\ge 0\}}=1,\\[+4pt]
\label{l2}&\lim_{n\to\infty}\mathds{1}_{\{S^{\mu_0,\sigma}_{\tau_n}-x_n\le 0\}}=\mathds{1}_{\{S^{\mu_0,\sigma}_{\tau_\infty}\le 0\}}=0.
\end{align}

Once again we use \eqref{simple} to invoke dominated convergence theorem, upon noticing that $S^{\beta,\sigma}_{\tau_n}\le S^{\beta,\sigma}_{\tau_\infty}$ for any $\beta$. From \eqref{l1}--\eqref{l2} we then obtain (restoring the notation $\tau_\infty=\tau_*$ under $\OP_{0,y}$)
\begin{align*}%\label{UUx3}
\UU_x(0+,y)=&\,\lim_{n\to\infty}\UU_x(x_n,y)\nonumber\\[+4pt]
=&-\tfrac{2\mu_0}{\sigma^2}\OE_{0,y}\left[\mathds{1}_{\{\tau_*<+\infty\}}e^{\frac{2\mu_0}{\sigma^2}A_{\tau_*}-\rho\tau_*}\left(1+e^{\frac{\theta}{\sigma}(\XX_{\tau_*}+\YY_{\tau_*})}\right)\right]\nonumber\\[+4pt]
&-\tfrac{\theta}{\sigma}\OE_{0,y}\left[\mathds{1}_{\{\tau_*<+\infty\}}e^{\frac{2\mu_0}{\sigma^2}A_{\tau_*}-\rho\tau_*}e^{\frac{\theta}{\sigma}(\XX_{\tau_*}+\YY_{\tau_*})}\right]\\[+4pt]
=&-\OE_{0,y}\left[\mathds{1}_{\{\tau_*<+\infty\}}e^{\frac{2\mu_0}{\sigma^2}A_{\tau_*}-\rho\tau_*}\left(\tfrac{2\mu_0}{\sigma^2}+\tfrac{2\mu_0}{\sigma^2}e^{\frac{\theta}{\sigma}(\XX_{\tau_*}+\YY_{\tau_*})}+\tfrac{\hat \mu}{\sigma^2}e^{\frac{\theta}{\sigma}(\XX_{\tau_*}+\YY_{\tau_*})}\right)\right].\nonumber
\end{align*}

Similarly, for $\UU_y$ we get
\begin{align*}%\label{UUx4}
\UU_y(0+,y)=&\,\lim_{n\to\infty}\UU_y(x_n,y)\nonumber\\[+4pt]
=&\tfrac{\hat \mu}{\sigma^2}\OE_{0,y}\left[\mathds{1}_{\{\tau_*<+\infty\}}e^{\frac{2\mu_0}{\sigma^2}A_{\tau_*}-\rho\tau_*}e^{\frac{\theta}{\sigma}(\XX_{\tau_*}+\YY_{\tau_*})}\right].
\end{align*}

Combining the two expressions above we find
\begin{align*}
&\tfrac{\sigma^2}{2}(\UU_x+\UU_y)(0+,y)\\
&=-\mu_0\OE_{0,y}\left[\mathds{1}_{\{\tau_*<+\infty\}}e^{\frac{2\mu_0}{\sigma^2}A_{\tau_*}-\rho\tau_*}\left(1+e^{\frac{\theta}{\sigma}(\XX_{\tau_*}+\YY_{\tau_*})}\right)\right]=-\mu_0\UU(0,y),
\end{align*}
where the last equality uses \eqref{opt-1}.
\end{proof}

With the aim of eventually going back to our original problem \eqref{U} in the $(x,\pi)$ coordinates, we need now to consider the inverse of $b(\cdot)$. In particular, recalling the non-decreasing map $x\mapsto\chi(x)$ from \eqref{xi}, and noticing that 
\begin{align*}
x<b(y)\iff y>\chi(x),
\end{align*}
we conclude that $\chi$ is the right continuous inverse of $b$, i.e.
\begin{align*}
\chi(x)=\inf\{y\in\R\,:\,b(y)>x\}.
\end{align*}
From \eqref{xi} we also obtain that the map $x\mapsto \psi(x)$ is non-decreasing and right-continuous with
\begin{align*}
\psi(x)=\exp\left[\tfrac{\theta}{\sigma}\big(\chi(x)+x\big)\right].
\end{align*}

We can therefore take the non-decreasing, left-continuous inverse of $\psi$, that is
\begin{align}\label{c}
c(\varphi)=\inf\{x>0\,:\,\psi(x)\ge \varphi\}
\end{align}
and notice that
\[
\varphi>\psi(x)\iff x<c(\varphi).
\]

Next, we recall that $\varphi=\pi/(1-\pi)$ and, since $\pi\mapsto \pi/(1-\pi)$ is increasing, we can define the optimal boundary in the $(x,\pi)$-coordinates by setting 
\begin{align}\label{d}
d(\pi):=c\left(\tfrac{\pi}{1-\pi}\right)\left(=c(\varphi)\right).
\end{align}
Clearly $\pi\mapsto d(\pi)$ is left-continuous and non-decreasing and, finally, we can define its right-continuous, non-decreasing inverse
\begin{align*}
\lambda(x):=\inf\{\pi\in(0,1)\,:\,d(\pi)>x\}.
\end{align*}

Summarising the above, the sets $\cC$ and $\cS$ can be equivalently described in terms of $d(\cdot)$, $\lambda(\cdot)$, $c(\cdot)$, $\psi(\cdot)$, $b(\cdot)$ or $\chi(\cdot)$, depending on the chosen coordinates, i.e.
\begin{align}\label{C-last}
\cC=&\{(x,y)\,:\,y>\chi(x)\}=\{(x,y)\,:\,x<b(y)\}\nonumber\\
=&\{(x,\varphi)\,:\,\varphi>\psi(x)\}=\{(x,\varphi)\,:\,x<c(\varphi)\}\\
=&\{(x,\pi)\,:\,\pi>\lambda(x)\}=\{(x,\pi)\,:\,x<d(\pi)\},\nonumber
\end{align} 
and 
\begin{align}\label{S-last}
\cS=&\{(x,y)\,:\,y\le \chi(x)\}=\{(x,y)\,:\,x\ge b(y)\}\nonumber\\
=&\{(x,\varphi)\,:\,\varphi\le \psi(x)\}=\{(x,\varphi)\,:\,x\ge c(\varphi)\}\\
=&\{(x,\pi)\,:\,\pi\le \lambda(x)\}=\{(x,\pi)\,:\,x\ge d(\pi)\}.\nonumber
\end{align} 

Before closing this section we determine the limiting behaviour of the boundary $d(\pi)$ as $\pi\to\{0,1\}$. Let us recall the measure $\P^\theta$ introduced in the proof of Proposition \ref{prop:OU1n} and the associated Brownian motion $W^\theta$. Moreover let us also consider 
\begin{align}\label{Umu1}
U^{\mu_1}(x)=\sup_{\tau\ge 0}\E^\theta_x\left[e^{\frac{2\mu_1}{\sigma^2}A_{\tau}-\rho\tau}\right]
\end{align}
which corresponds to problem \eqref{U} with $\pi=1$ (notice that indeed $\XX$ has drift $\mu_1$ under $\P^\theta$).
It was shown in \cite[Sec.~8.3]{DeAE17} that \eqref{Umu1} is the optimal stopping problem associated to the dividend problem with full information and  drift of $X^D$ equal to $\mu_1$. It then follows from \cite{DeAE17} that there is an optimal stopping boundary $a^*>0$ that fully characterises the solution of \eqref{Umu1} and the stopping set is $[a^*,+\infty)$ (an expression for $a_*$ can be found in \cite[Thm.~2.53, Ch.~2]{Sch} with the notation $m=\mu_1$ and $\delta=\rho$).

We now notice that using Girsanov theorem and \eqref{limn}, from \eqref{U-OU} we obtain
\begin{align}\label{Umu2}
U(x,\varphi/(1+\varphi))=&\frac{\OU(x,\varphi)}{1+\varphi}=\lim_{n\to\infty}\frac{\OU^n(x,\varphi)}{1+\varphi}\\
=&\lim_{n\to\infty}\frac{\varphi}{1+\varphi}\sup_{\tau\le \zeta_n}\left(\tfrac{1}{\varphi}\OE\left[e^{\frac{2\mu_0}{\sigma^2}A^x_\tau-\rho\tau}\right]+\E^\theta\left[e^{\frac{2\mu_1}{\sigma^2}A^x_\tau-\rho\tau}\right]\right)\nonumber\\
=&\frac{\varphi}{1+\varphi}\sup_{\tau\ge0}\left(\tfrac{1}{\varphi}\OE\left[e^{\frac{2\mu_0}{\sigma^2}A^x_\tau-\rho\tau}\right]+\E^\theta\left[e^{\frac{2\mu_1}{\sigma^2}A^x_\tau-\rho\tau}\right]\right).\nonumber
\end{align}
Then letting $\pi\to 1$ (or equivalently $\varphi\to\infty$) we obtain from the last expression above
\begin{align}\label{Umu3}
\lim_{\pi\to 1} U(x,\pi)=U^{\mu_1}(x),\quad\text{for all $x\in[0,+\infty)$}.
\end{align}

We also need to state two simple facts which can be obtained by \eqref{U-OU} and straightforward calculations. For all $(x,\pi)\in\cO$ we have
\begin{align*}
U_x(x,\pi)=\frac{1}{1+\varphi}\OU_x(x,\varphi),\quad U_\pi(x,\pi)=-\OU(x,\varphi)+(1+\varphi)\OU_\varphi(x,\varphi).
\end{align*}
Thanks to \eqref{sublin} and \eqref{Lip-OU}, the above and \eqref{U-OU} imply that there is a constant $c>0$ such that
\begin{align}\label{gradb}
|U(x,\pi)|+|U_x(x,\pi)|+(1-\pi)|U_\pi(x,\pi)|\le c, \quad\text{for $(x,\pi)\in\cO$}.
\end{align}
%Moreover for all $(x,\pi)\in\cC$ we have
%\begin{align}
%U_{xx}(x,\pi)=\frac{1}{1+\varphi}\OU_{xx}(x,\varphi),\quad U_{x\pi}(x,\pi)=-\OU(x,\varphi)+(1+\varphi)\OU_\varphi(x,%\varphi).
%\end{align}

We can now state our next result. 
%(notice that a refinement of \eqref{lims-d} will be provided in Proposition \ref{prop:ref-lims-d}).
\begin{proposition}\label{prop:lims-d}
Under Assumption \ref{ass:mu} we have 
\begin{align}\label{lims-d}
\lim_{\pi\to 0}d(\pi)=0\quad\text{and}\quad\lim_{\pi\to 1} d(\pi)=a^*,
\end{align}
where $a^*$ is the optimal boundary for \eqref{Umu1}.
\end{proposition}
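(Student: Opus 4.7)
The map $d$ is non-decreasing by \eqref{C-last}, so $d(0+):=\lim_{\pi\to 0}d(\pi)$ and $d(1-):=\lim_{\pi\to 1}d(\pi)$ exist in $[0,+\infty]$; my plan is to obtain matching two-sided bounds at each endpoint.

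For $\pi\to 1$ I will sandwich $d(\pi)$ around $a^*$. The key input is the Girsanov decomposition used in \eqref{Umu2}: for every admissible $\tau$,
\[
\OE\bigl[e^{\frac{2\mu_0}{\sigma^2}A_\tau-\rho\tau}(1+\PH_\tau)\bigr]
=\OE\bigl[e^{\frac{2\mu_0}{\sigma^2}A_\tau-\rho\tau}\bigr]+\varphi\,\E^\theta\bigl[e^{\frac{2\mu_1}{\sigma^2}A_\tau-\rho\tau}\bigr]
\le 1+\varphi\,U^{\mu_1}(x),
\]
since $\mu_0<0$ forces the first summand pointwise $\le 1$. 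Taking suprema over truncated $\tau\le\zeta_n$ and sending $n\to\infty$ through Proposition \ref{prop:OU1n} yields $U(x,\pi)\le(1-\pi)+\pi\,U^{\mu_1}(x)$. For any $x>a^*$ one has $U^{\mu_1}(x)=1$, hence $U(x,\pi)\le 1$, and combined with $U\ge 1$ this forces $U(x,\pi)=1$, so $(x,\pi)\in\cS$ and $d(\pi)\le x$ for all $\pi\in(0,1)$; letting $x\downarrow a^*$ gives $d(1-)\le a^*$. The reverse inequality follows from the pointwise limit \eqref{Umu3}: for $x<a^*$, $U^{\mu_1}(x)>1$ forces $U(x,\pi)>1$ eventually, hence $x<d(\pi)$ for $\pi$ close to $1$, and letting $x\uparrow a^*$ delivers $d(1-)\ge a^*$.

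The limit at $\pi\to 0$ is more delicate: the same bound only yields $U(x,\pi)\to 1$ from above, not equality. I would instead exploit the strong Markov property at the first reflection time $\tau_0:=\inf\{t\ge 0:\XX_t=0\}$. For $x>0$ we have $A_t\equiv 0$ on $[0,\tau_0)$, so any stopping time $\tau<\tau_0$ gives value $e^{-\rho\tau}\le 1$ and is dominated by $\tau=0$; splitting the sup in \eqref{U} at $\tau_0$ therefore gives
\[
U(x,\pi)=\max\!\Big(1,\;\E_{x,\pi}\!\big[e^{-\rho\tau_0}\,U(0,\pp_{\tau_0})\big]\Big),
\]
so $(x,\pi)\in\cS$ is equivalent to
\[
\E_{x,\pi}\!\big[e^{-\rho\tau_0}\bigl(U(0,\pp_{\tau_0})-1\bigr)\big]\;\le\;1-\E_{x,\pi}\!\big[e^{-\rho\tau_0}\big].
\]
The sublinear bound \eqref{sublin}, translated via $U(x,\pi)=(1-\pi)\OU(x,\pi/(1-\pi))$, reads $U(0,\pp)-1\le(c_1-1)\pp$; combining this with optional stopping for the bounded $\P_{x,\pi}$-martingale $\pp$, which gives $\E_{x,\pi}[\pp_{\tau_0}]=\pi$, bounds the left-hand side by $(c_1-1)\pi$, while the right-hand side is strictly positive for every $x>0$. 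Hence for each $x>0$ there exists $\pi_0(x)>0$ such that $d(\pi)\le x$ whenever $\pi\le\pi_0(x)$; letting $x\downarrow 0$ yields $d(0+)=0$.

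The main obstacle is this second limit: upper bounds on $U$ alone cannot separate $U=1$ from $U>1$. The strong Markov decomposition at $\tau_0$ is the decisive new ingredient, as it replaces a uniform-in-$x$ containment argument by a single quantitative inequality pitting the $O(\pi)$ option value of waiting past $\tau_0$, which is small precisely because $\pp$ is a martingale starting from a small value, against the strictly positive discount cost $1-\E[e^{-\rho\tau_0}]$ incurred before reaching the reflection boundary.
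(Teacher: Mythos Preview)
Your argument for $\pi\to 1$ is correct and takes a cleaner route than the paper. The paper gets $d(1-)\ge a^*$ from \eqref{Umu3} just as you do, but for $d(1-)\le a^*$ it argues by contradiction through the PDE $(\cL_{X,\pi}-\rho)U=0$: assuming $d(1-)>a^*$, it integrates against a test function on $[x_1,x_2]\subset(a^*,d(1-))$ and shows that a certain $\pi$-integral must be simultaneously divergent and bounded. Your inequality $U(x,\pi)\le(1-\pi)+\pi\,U^{\mu_1}(x)$, read off from \eqref{Umu2}, is more direct and in fact yields the stronger uniform statement $d(\pi)\le a^*$ for every $\pi\in(0,1)$.

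Your argument for $\pi\to 0$ has a genuine gap: the identity
\[
U(x,\pi)=\max\Bigl(1,\;\E_{x,\pi}\bigl[e^{-\rho\tau_0}\,U(0,\pp_{\tau_0})\bigr]\Bigr)
\]
does not follow from ``splitting the sup at $\tau_0$''. It is true that any stopping time with $\tau<\tau_0$ almost surely yields $e^{-\rho\tau}\le 1$, and that committing to $\tau\ge\tau_0$ yields $\E_{x,\pi}[e^{-\rho\tau_0}U(0,\pp_{\tau_0})]$. But a general stopping rule may stop before $\tau_0$ on part of the sample space and continue on the rest, using the information carried by $(\pp_t)_{t<\tau_0}$; such mixed rules are not dominated by the maximum of the two pure choices. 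Concretely, even when $\E_{x,\pi}[e^{-\rho\tau_0}U(0,\pp_{\tau_0})]\le 1$, one can gain by waiting briefly: if $\pp_t$ moves up then $\E[e^{-\rho\tau_0}U(0,\pp_{\tau_0})\,|\,\cF_t]$ rises and one keeps waiting, while if it moves down one bails out at the small cost $1-e^{-\rho t}$. Because of this optionality your displayed equivalence for $(x,\pi)\in\cS$ fails in the direction you need, and the estimate that follows only reproduces $U(x,\pi)-1\le(c_1-1)\pi$, i.e.\ \eqref{sublin}, which does not place $(x,\pi)$ in $\cS$. The paper closes this case by the same PDE contradiction as for $\pi\to 1$, now fed by $\lim_{\pi\to 0}U(x,\pi)=1$ from Proposition~\ref{prop:OU1}(iii).
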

\begin{proof}
\emph{Step 1} (Limit as $\pi\to 1$).
Recall that $d(\cdot)$ is non-decreasing and left-continuous. Then 
\begin{align}\label{limd}
d(1)=\lim_{\pi\to1}d(\pi). 
\end{align}
Thanks to \eqref{gradb} we have
\begin{align*}
\left|U\big(d(\pi),\pi\big)-U^{\mu_1}\big(d(1)\big)\right|\le &\left|U\big(d(\pi),\pi\big)-U\big(d(1),\pi\big)\right|+\left|U\big(d(1),\pi\big)-U^{\mu_1}\big(d(1)\big)\right|\\[+3pt]
\le &\,c\,\big(d(1)-d(\pi)\big)+\left|U\big(d(1),\pi\big)-U^{\mu_1}\big(d(1)\big)\right|.
\end{align*}
Recall that $U\big(d(\pi),\pi\big)=1$ for all $\pi\in(0,1)$, hence taking limits as $\pi\uparrow 1$ in the expression above and using \eqref{Umu3} and \eqref{limd}, we obtain
\begin{align}
1=\lim_{\pi\to 1} U(d(\pi),\pi)=U^{\mu_1}(d(1)).
\end{align}
The latter implies $d(1)\ge a^*$ by definition of $a^*$.

Let us now assume that $d(1)>a^*$ and take an interval $[x_1,x_2]\subset (a^*,d(1))$. Pick an arbitrary positive function $\phi\in C^\infty_c(x_1,x_2)$ such that $\int_{\R_+}\phi(\zeta)=1$. Rewriting \eqref{BVP1} in the $(x,\pi)$-coordinates, we have $(\cL_{X,\pi}-\rho)U=0$ in $\cC$. Thanks to left-continuity of $d(\cdot)$ we can choose $\eps>0$, such that $\mathcal{R}_\eps:=[x_1,x_2]\times [1-\eps,1)\subset\cC$ and 
\[
\phi(x)(\cL_{X,\pi}U-\rho U)(x,\pi)=0\qquad \text{for $(x,\pi)\in\mathcal{R}_\eps$}.  
\]
Integration by parts gives
\begin{align}\label{d1}
0=&\int^{x_2}_{x_1}\phi(\zeta)(\cL_{X,\pi}-\rho)U(\zeta,\pi)d\zeta\\
=&\int^{x_2}_{x_1} U(\zeta,\pi)(\cG-\rho)\phi(\zeta)d\zeta\nonumber\\
&+\pi(1-\pi)\int^{x_2}_{x_1}\big(\tfrac{\theta^2}{2}\pi(1-\pi)U_{\pi\pi}(\zeta,\pi)+\hat\mu U_{x\pi}(\zeta,\pi)\big)\phi(\zeta) d\zeta\nonumber
\end{align} 
where $\cG=\tfrac{\sigma^2}{2}\tfrac{\partial^2}{\partial x^2}-(\mu_0+\hat\mu\pi)\tfrac{\partial}{\partial x}$. Set
\[
F_\phi(\pi):=\int^{x_2}_{x_1}\big(\tfrac{\theta^2}{2}\pi(1-\pi)U_{\pi\pi}(\zeta,\pi)+\hat\mu U_{x\pi}(\zeta,\pi)\big)\phi(\zeta) d\zeta
\]
and let $\pi\to 1$ in \eqref{d1}. Then by \eqref{Umu3} and dominated convergence, we get
\begin{align*}
\lim_{\pi\to1}\pi(1-\pi)F_\phi(\pi)=-\int^{x_2}_{x_1} U^{\mu_1}(\zeta)(\cG-\rho)\phi(\zeta)d\zeta.
\end{align*}
%The limit on the left-hand side is well defined because so is the expression on the right-hand side. 
Since $U^{\mu_1}(x)=1$ for $x\in(x_1,x_2)$, after undoing the integration by parts, we get
\begin{align}\label{d2}
\lim_{\pi\to1}\pi(1-\pi)F_\phi(\pi)=\rho.
\end{align}
Now, \eqref{d2} says that $F_\phi(\pi)$ behaves as $\rho/(1-\pi)$ for $\pi\to 1$. That implies 
\begin{align}\label{contr}
\int^1_{1-\eps}F_\phi(\pi)d\pi=+\infty.
\end{align}
We will show that \eqref{contr} is impossible.

For $\eps>0$ as above and $0<\delta<\eps$, Fubini's theorem and integration by parts give
\begin{align*}
\int^{1-\delta}_{1-\eps}&F_\phi(\pi)d\pi\\
=&\int_{x_1}^{x_2}\left[\int^{1-\delta}_{1-\eps}\big(\tfrac{\theta^2}{2}\pi(1-\pi)U_{\pi\pi}(\zeta,\pi)+\hat\mu U_{x\pi}(\zeta,\pi)\big)d\pi\right] \phi(\zeta)d\zeta\\
=&\tfrac{\theta^2}{2}\int_{x_1}^{x_2}\left[\Big(\pi(1-\pi)U_\pi(\zeta,\pi)-(1-2\pi)U(\zeta,\pi)\Big|^{\pi=1-\delta}_{\pi=1-\eps}-2\int_{1-\eps}^{1-\delta}U(\zeta,\pi)d\pi\right]\phi(\zeta)d\zeta\\
&+\hat\mu\int_{x_1}^{x_2} \Big(U_x(\zeta,\pi)\Big|^{\pi=1-\delta}_{\pi=1-\eps}\,\,\,\phi(\zeta)d\zeta\le  c',
\end{align*}
where the last inequality uses \eqref{gradb} and $c'>0$ is independent of $\delta$. Letting $\delta\to 0$ we reach a contradiction with \eqref{contr}.
\vspace{+4pt}

\emph{Step 2} (Limit as $\pi\to 0$). The proof follows the same steps as above. Let $d(0+):=\lim_{\pi\to 0}d(\pi)$ and assume that $d(0+)>0$. Then take $[x_1,x_2]\subset (0,d(0+))$ and an arbitrary positive function $\phi\in C^\infty_c(x_1,x_2)$ such that $\int_{\R_+}\phi(\zeta)=1$. Repeating the same steps as above we write \eqref{d1} and notice that (iii) in Proposition \ref{prop:OU1} implies that $\lim_{\pi\to0}U(x,\pi)=1$ for all $x\ge 0$. Hence taking $\pi\to0$ in \eqref{d1} gives
\begin{align}\label{d3}
\lim_{\pi\to0}\pi(1-\pi)F_\phi(\pi)=\rho,
\end{align}
which also implies $\int^{\eps}_0F_\phi(\pi)d\pi=+\infty$. The latter leads to a contradiction, exactly as in step 1 above.
\end{proof}

Using \eqref{C-last} and \eqref{S-last} we can conclude that also the boundaries $c$ and $b$ are bounded above by $a_*$ and have the same limits.
\begin{corollary}
We have $0\le c(\varphi)\le a_*$ for $\varphi\in\R_+$ and $0\le b(y)\le a_*$ for $y\in\R$. Moreover
\[
\lim_{\varphi\to 0}c(\varphi)=\lim_{y\to-\infty}b(y)=0\quad\text{and}\quad\lim_{\varphi\to\infty}c(\varphi)=\lim_{y\to\infty}b(y)=a_*.
\]
\end{corollary}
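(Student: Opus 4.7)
The plan is to reduce everything to Proposition \ref{prop:lims-d}, which already determines the endpoint behaviour of $d(\cdot)$. Since $d$, $c$, $\psi$, $b$ and $\chi$ parametrise the same optimal boundary in the three equivalent coordinate systems $(x,\pi)$, $(x,\varphi)$ and $(x,y)$, all the claimed statements should follow from a coordinate change combined with monotonicity.

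First I would handle $c$. Since the map $\pi\mapsto\varphi=\pi/(1-\pi)$ is an increasing bijection from $(0,1)$ onto $(0,+\infty)$, the defining relation \eqref{d} gives $c(\varphi)=d(\varphi/(1+\varphi))$. Combining this with the monotonicity of $d$ and with Proposition \ref{prop:lims-d}, the limits $\varphi\to 0$ and $\varphi\to+\infty$ correspond respectively to $\pi\to 0$ and $\pi\to 1$ and yield the two claimed endpoint values for $c$, together with the sandwich $0\le c(\varphi)\le a^*$.

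Next I would turn to $b$. Using the coordinate change $\varphi=\exp[\theta(x+y)/\sigma]$ from \eqref{YY}, the two descriptions of $\cS$ in \eqref{S-last} translate into the implicit identity
\[
b(y)=\inf\big\{x\ge 0 \,:\, x\ge c(\exp[\theta(x+y)/\sigma])\big\}.
\]
Since $c\le a^*$ everywhere, choosing $x=a^*$ already satisfies the inequality, hence $b(y)\le a^*$ for all $y$. For $y\to-\infty$, fixing $\eps>0$ and taking $x=\eps$ we have $\exp[\theta(\eps+y)/\sigma]\to 0$; then $c(\varphi)\to 0$ as $\varphi\to 0$ forces $c(\exp[\theta(\eps+y)/\sigma])\le \eps$ for $y$ sufficiently negative, hence $b(y)\le\eps$. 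For $y\to+\infty$, fixing $\eps>0$ and taking $x=a^*-\eps$ we have $\exp[\theta(x+y)/\sigma]\to+\infty$; then $c(\varphi)\to a^*$ as $\varphi\to+\infty$ gives $c(\exp[\theta(x+y)/\sigma])>x$ for $y$ large, which by \eqref{C-last} places $(x,y)$ in $\cC$ and so $b(y)>a^*-\eps$. Together with the upper bound this yields the two limits.

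I do not expect any serious obstacle: the argument is essentially book-keeping between the three coordinate systems on top of Proposition \ref{prop:lims-d}. The only minor point to watch is that $c$ and $b$ are defined as one-sided generalised inverses of $\psi$ and $\chi$ respectively, but since all the boundaries involved are monotone and only endpoint limits are being computed, no sharper regularity of these inverses is required.
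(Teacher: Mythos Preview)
Your proposal is correct and follows exactly the approach the paper indicates: the paper gives no detailed proof of this corollary at all, merely noting that the claims follow from \eqref{C-last} and \eqref{S-last}, which is precisely the coordinate change from $d(\cdot)$ to $c(\cdot)$ and $b(\cdot)$ that you carry out explicitly. Your argument is a faithful and complete elaboration of what the paper leaves implicit.
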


%{\color{red}(TDA: can we prove continuity of $d(\cdot)$?)}
%%%%%%%%%%%%%%%%%%%%%%%%%%%%%%%%%%%%%%%%%%%%%%%%%%%%%%%%%%

\section{Solution of the dividend problem}\label{sec:solution}

At this point we can construct a candidate for the value function $V$ in \eqref{P2} by setting
\begin{align}\label{v}
v(x,\pi):=\int_0^x U(\zeta,\pi)d\zeta,\qquad(x,\pi)\in\overline \cO.
\end{align} 
Thanks to Corollary \ref{cor:vC1} and dominated convergence we immediately obtain
\begin{corollary}\label{cor:U-C2}
Under Assumption \ref{ass:mu} the function $v$ belongs to $C(\overline{\cO})\cap C^1(\cO)$. Moreover $v_{xx}$ and $v_{x\pi}$ are continuous in $\cO$.
\end{corollary}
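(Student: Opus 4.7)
The plan is to verify all regularity claims by differentiating under the integral sign in \eqref{v}, using the bounds and continuity results for $U$ established in the previous section. None of the steps involves deep new ideas; the main delicacy is in handling the $\pi$-derivative, where \eqref{gradb} gives only a bound that blows up like $1/(1-\pi)$.

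First, I would show that $v\in C(\overline{\cO})$. From \eqref{gradb} we have $0\le U(\zeta,\pi)\le c$ uniformly on $\cO$. Combined with $\lim_{\pi\to 0}U(x,\pi)=1$ (from $(iii)$ of Proposition \ref{prop:OU1} together with \eqref{U-OU}) and $\lim_{\pi\to 1}U(x,\pi)=U^{\mu_1}(x)$ from \eqref{Umu3}, the integrand extends continuously to the full closure $[0,+\infty)\times[0,1]$, and dominated convergence applied to sequences $(x_n,\pi_n)\to(x,\pi)\in\overline{\cO}$ yields continuity of $v$ on $\overline{\cO}$.

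Next, the fundamental theorem of calculus, together with continuity of $\zeta\mapsto U(\zeta,\pi)$ for each fixed $\pi\in(0,1)$, gives
\begin{equation}
v_x(x,\pi)=U(x,\pi)\qquad\text{for all }(x,\pi)\in\cO.
\end{equation}
By Corollary \ref{cor:vC1} we know $U\in C^1(\cO)$, hence $v_x$ is continuous on $\cO$ and moreover $v_{xx}=U_x$ and $v_{x\pi}=U_\pi$ are both continuous on $\cO$.

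It remains to establish existence and continuity of $v_\pi$ on $\cO$. For any compact $K\subset\cO$ there exists $\delta\in(0,1/2)$ and $R>0$ such that $K\subseteq[0,R]\times[\delta,1-\delta]$. The bound \eqref{gradb} gives $|U_\pi(\zeta,\pi)|\le c/\delta$ uniformly for $(\zeta,\pi)\in[0,R]\times[\delta,1-\delta]$, so the dominated convergence theorem justifies differentiation under the integral:
\begin{equation}
v_\pi(x,\pi)=\int_0^x U_\pi(\zeta,\pi)\,d\zeta,\qquad(x,\pi)\in K.
\end{equation}
Since $U_\pi$ is continuous on $\cO$ and the integrand is dominated uniformly on $K$, a further application of dominated convergence shows that $v_\pi$ is continuous on $K$. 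Arbitrariness of $K\subset\cO$ gives $v_\pi\in C(\cO)$, completing the proof that $v\in C^1(\cO)$ with $v_{xx},v_{x\pi}\in C(\cO)$.

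The only subtle point is the behaviour of $v_\pi$ near $\pi\in\{0,1\}$, where the bound on $U_\pi$ degenerates; but since $C^1$ regularity is only claimed on the open set $\cO$, a localisation on compacts is sufficient and no finer estimate is required at this stage. (I note that a separate statement about $v_{\pi\pi}\in L^\infty_{loc}$, needed for the verification theorem, is deferred to Proposition \ref{prop:vpipi} and requires a genuinely new argument, as emphasised in the introduction.)
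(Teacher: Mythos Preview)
Your proof is correct and follows essentially the same approach as the paper, which dispatches the corollary in a single sentence (``Thanks to Corollary~\ref{cor:vC1} and dominated convergence we immediately obtain''). You have merely expanded the details: $v_x=U$ by the fundamental theorem of calculus, $v_{xx}=U_x$ and $v_{x\pi}=U_\pi$ are continuous by Corollary~\ref{cor:vC1}, and $v_\pi=\int_0^x U_\pi(\zeta,\pi)\,d\zeta$ via dominated convergence using the locally uniform bound on $U_\pi$ from \eqref{gradb}.
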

In order to apply Theorem \ref{thm:verif}, it remains to show that $v_{\pi\pi}\in L^\infty_{loc}(\cO)$ and $v_{\pi\pi}\in C(\overline{\cC}\cap\cO)$. This is a non-trivial task and relies on a semi-explicit characterisation of the weak derivative $v_{\pi\pi}$.
\begin{proposition}\label{prop:vpipi}
Let Assumption \ref{ass:mu} hold. The function $v$ in \eqref{v} admits weak derivative $v_{\pi\pi}\in L^\infty_{loc}(\cO)$. Moreover, we can select an element of the equivalence class of $v_{\pi\pi}\in L^\infty_{loc}(\cO)$ (denoted again by $v_{\pi\pi}$) given by  
\begin{align}\label{vpipi}
v_{\pi\pi}(x,\pi)=2&\Big[\rho\!\!\int_0^{x\wedge d_+(\pi)}U(\zeta,\pi)d\zeta-\frac{\sigma^2}{2}U_x(x\wedge d_+(\pi),\pi)\\[+4pt]
&\quad-\hat\mu\pi(1-\pi)U_\pi(x\wedge d_+(\pi),\pi)\notag\\
&\quad-(\mu_0+\hat\mu\pi)U(x\wedge d_+(\pi),\pi)\Big]\big(\theta\pi(1-\pi)\big)^{-2},\notag
\end{align}
with $d_+(\pi):=\lim_{\eps\to 0}d(\pi+\eps)$.
\end{proposition}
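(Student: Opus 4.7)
Denote by $w(x,\pi)$ the right-hand side of \eqref{vpipi}. The strategy is to show (i) $w\in L^\infty_{loc}(\cO)$ and (ii) that $w$ represents the distributional derivative $v_{\pi\pi}$, by combining the PDE $(\cL_{X,\pi}-\rho)U=0$ in $\cC\setminus(\{0\}\times(0,1))$ (valid by Remark \ref{rem:reg}) with the reflection/creation identity of Proposition \ref{prop:b-cond}. The geometric tools are monotonicity of $d(\cdot)$ and the smooth fit $U\in C^1(\cO)$ of Corollary \ref{cor:vC1}, which in particular implies $U_\pi\equiv 0$ on $\overline\cS\cap\cO$.

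\textbf{Deriving the formula and local boundedness.} Solving the PDE for $U_{\pi\pi}$ in the interior of $\cC$ and integrating in $\zeta\in[0,a]$ with $0<a<d(\pi)$, I apply the fundamental theorem of calculus separately to the $U_{xx}$, $U_{x\pi}$ and $U_x$ terms, so the resulting expression involves only the one-sided limits $U_x(0^+,\pi)$, $U_\pi(0^+,\pi)$, $U(0^+,\pi)$ at $\zeta=0$ rather than the pointwise values of the higher derivatives themselves. The sum of these boundary contributions is exactly $\tfrac{\sigma^2}{2}U_x(0^+,\pi)+\sigma\theta\pi(1-\pi)U_\pi(0^+,\pi)+(\mu_0+\hat\mu\pi)U(0^+,\pi)$, which vanishes by Proposition \ref{prop:b-cond} (applicable since $(0,\pi)\in\cC$ whenever $d(\pi)>0$). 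Using $\hat\mu=\sigma\theta$ this gives
\[
\int_0^a U_{\pi\pi}(\zeta,\pi)\,d\zeta=\tfrac{2}{\theta^2\pi^2(1-\pi)^2}\Big[\rho\!\!\int_0^a U\,d\zeta-\tfrac{\sigma^2}{2}U_x(a,\pi)-\hat\mu\pi(1-\pi)U_\pi(a,\pi)-(\mu_0+\hat\mu\pi)U(a,\pi)\Big],
\]
and passing to the limit $a\uparrow d(\pi)$ (allowed by continuity of $U,U_x,U_\pi$ up to $\partial\cC$, Corollary \ref{cor:vC1}) identifies this with $w(x,\pi)$ at $a=x\wedge d_+(\pi)$. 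Since $d=d_+$ outside a countable set, the choice of $d_+$ only changes $w$ on a Lebesgue-null set in $\pi$. For local boundedness on a compact $K\subset\cO$: $\pi\in[\delta,1-\delta]$ keeps $\theta^2\pi^2(1-\pi)^2$ away from zero, $x\wedge d_+(\pi)\in[0,a^*]$ by the corollary to Proposition \ref{prop:lims-d}, and the four terms in the bracket are bounded by \eqref{gradb} together with the existence of the $x\to 0^+$ limits of $U,U_x,U_\pi$ established in the proof of Proposition \ref{prop:b-cond}.

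\textbf{Identifying $w$ with $v_{\pi\pi}$.} Differentiating $v(x,\pi)=\int_0^x U(\zeta,\pi)d\zeta$ under the integral (legitimate by Corollary \ref{cor:U-C2} and \eqref{gradb}) gives $v_\pi(x,\pi)=\int_0^{x\wedge d(\pi)}U_\pi(\zeta,\pi)d\zeta$, using $U_\pi\equiv 0$ on $\overline\cS\cap\cO$. Fix $0<\pi_1<\pi_2<1$ and $x>0$. For each $\zeta\in(0,x]$, monotonicity of $d$ ensures the vertical segment $\{\zeta\}\times[\pi_1,\pi_2]$ crosses $\partial\cC$ at most once, at $\pi=\lambda(\zeta)$, where $U_\pi=0$ by smooth fit; inside $\cC$, $U_{\pi\pi}(\zeta,\cdot)$ is classical by Remark \ref{rem:reg}. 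The fundamental theorem of calculus in $\pi$ then yields
\[
U_\pi(\zeta,\pi_2)-U_\pi(\zeta,\pi_1)=\int_{\pi_1}^{\pi_2}U_{\pi\pi}(\zeta,\pi)\,\mathbf{1}_\cC(\zeta,\pi)\,d\pi.
\]
Integrating in $\zeta\in(0,x]$ and applying Fubini (justified by the local boundedness just established) produces $v_\pi(x,\pi_2)-v_\pi(x,\pi_1)=\int_{\pi_1}^{\pi_2}w(x,\pi)\,d\pi$, which is exactly the statement that $w$ is the weak derivative $v_{\pi\pi}$. The principal technical subtlety is the coordinated cancellation of two distinct boundary contributions---at the free boundary $\partial\cC$ via smooth fit of $U_\pi$, and at the reflection boundary $\{x=0\}$ via Proposition \ref{prop:b-cond}---which together are what promote the formula from being merely locally integrable to being locally bounded.
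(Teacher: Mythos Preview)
Your proof is correct and follows essentially the same approach as the paper: both arguments use Fubini to swap the $\zeta$- and $\pi$-integrations, invoke the smooth-fit $U_\pi(\zeta,\lambda(\zeta))=0$ to kill the free-boundary contribution, substitute the PDE $(\cL_{X,\pi}-\rho)U=0$ for $U_{\pi\pi}$, and use Proposition~\ref{prop:b-cond} to cancel the boundary terms at $\zeta=0^+$. The only cosmetic difference is the order of operations (you first integrate in $\zeta$ to \emph{define} $w$ and then prove $v_\pi(x,\pi_2)-v_\pi(x,\pi_1)=\int_{\pi_1}^{\pi_2}w\,d\pi$, whereas the paper runs one long computation against a test function $\phi$); one small point to tighten is that your Fubini/FTC step is justified not by the local boundedness of $w$ per se but by $U_{\pi\pi}\ge 0$ in $\cC$, which follows from the convexity of $\varphi\mapsto\OU(x,\varphi)$ in Proposition~\ref{prop:OU1}(ii).
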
 
\begin{proof}
Since $v_\pi(x,\,\cdot\,)$ is a continuous function for all $x>0$, as usual we say that its weak derivative with respect to $\pi$ is a function $f\in L^1_{loc}(\cO)$ such that for any $\phi\in C^\infty_c(0,1)$ it holds
\begin{align}\label{weak}
\int_0^1v_\pi(x,z)\phi'(z)dz=-\int_0^1f(x,z)\phi(z)dz.
\end{align}
Our aim is to compute $f$, show that it equals the right-hand side of \eqref{vpipi} and therefore conclude that $f\in L^\infty_{loc}(\cO)$, due to $U\in C^1(\cO)$.

Recalling that $U_\pi=0$ in $\cS$ and that $x<d(\pi)\iff \pi> \lambda(x)$ (cf.~\eqref{C-last}), using Fubini's theorem we can write
\begin{align}\label{a00}
&\int_0^1v_\pi(x,z)\phi'(z)dz=\int_0^1 \left(\int_0^{x\wedge d(z)}U_\pi(\zeta,z)d\zeta \right)\phi'(z)dz\\
&=\int_0^x \left(\int_{\lambda(\zeta)}^1 U_\pi(\zeta,z)\phi'(z)dz \right)d\zeta\nonumber\\
&=\int_0^x\left(U_\pi(\zeta,1)\phi(1)-U_\pi(\zeta,\lambda(\zeta))\phi(\lambda(\zeta))
-\int_{\lambda(\zeta)}^1 U_{\pi\pi}(\zeta,z)\phi(z)dz \right)d\zeta\nonumber\\
&=-\int_0^x\left(\int_{\lambda(\zeta)}^1 U_{\pi\pi}(\zeta,z)\phi(z)dz \right)d\zeta,\nonumber
\end{align}
where the final equation holds because $U_\pi(\zeta,\lambda(\zeta))=0$ for all $\zeta\in(0,x)$ and $\phi(1)=0$.

Now we rewrite the last expression above by using that  
\begin{align*}
\tfrac{\theta^2}{2}\pi^2(1-\pi)^2 U_{\pi\pi}=-\tfrac{\sigma^2}{2}U_{xx}-\hat\mu\pi(1-\pi)U_{x\pi} -(\mu_0+\hat \mu\pi)U_x+\rho U\quad\text{in $\cC\setminus(\{0\}\times\R_+)$},
\end{align*}
thanks to \eqref{BVP1} written in the $(x,\pi)$-coordinates. Hence, by using Fubini's theorem again, we get
\begin{align}\label{a01}
-&\int_0^x\left(\int_{\lambda(\zeta)}^1 U_{\pi\pi}(\zeta,z)\phi(z)dz \right)d\zeta\\
=&2\int_0^1\!\!\Big(\int_0^{x\wedge d(z)}\Big[\tfrac{\sigma^2}{2}U_{xx}(\zeta,z)+\hat\mu z(1-z)U_{x\pi}(\zeta,z)\nonumber\\
&\hspace{+80pt} +(\mu_0+\hat \mu z)U_x(\zeta,z)-\rho U(\zeta, z)\Big]d\zeta\Big)\big[\theta z(1-z)\big]^{-2}\phi(z)dz.\nonumber
\end{align}

Let us now consider the integral with respect to $\zeta$ and notice that we need only be concerned with $z\in[0,1]$ such that $d(z)>0$, as otherwise the integral is zero. Using \eqref{HJBu3b} for $U$ we obtain
\begin{align}\label{a02}
\int_0^{x\wedge d(z)}&\Big[\tfrac{\sigma^2}{2}U_{xx}(\zeta,z)+\hat\mu z(1-z)U_{x\pi}(\zeta,z)+(\mu_0+\hat \mu z)U_x(\zeta,z)\Big]d\zeta\nonumber\\
=&\tfrac{\sigma^2}{2}\big[U_x(x\wedge d(z),z)-U_x(0+,z)\big]+\hat\mu z(1-z)\big[U_\pi(x\wedge d(z),z)-U_\pi(0+,z)\big]\nonumber\\[+4pt]
&+(\mu_0+\hat \mu z)\big[U(x\wedge d(z),z)-U(0+,z)\big]\\[+4pt]
=&\tfrac{\sigma^2}{2}U_x(x\wedge d(z),z)+\hat\mu z(1-z)U_\pi(x\wedge d(z),z)+(\mu_0+\hat \mu z)U(x\wedge d(z),z).\nonumber
\end{align}

Combining \eqref{a00}--\eqref{a02} we get
\begin{align*}
&\int_0^1v_\pi(x,z)\phi'(z)dz\\
&=2\int_0^1\Big(\tfrac{\sigma^2}{2}U_x(x\wedge d(z),z)+\hat\mu z(1-z)U_\pi(x\wedge d(z),z)\\
&\hspace{+50pt}+(\mu_0+\hat \mu z)U(x\wedge d(z),z)-\rho\int_0^{x\wedge d(z)}U(\zeta,z)d\zeta\Big)\big[\theta z(1-z)\big]^{-2}\phi(z)dz,
\end{align*}
from which we deduce 
\begin{align*}
f(x,\pi)=2\Big(&\rho\int_0^{x\wedge d(\pi)}U(\zeta,\pi)d\zeta-\tfrac{\sigma^2}{2}U_x(x\wedge d(\pi),\pi)\\
&-\hat\mu \pi(1-\pi)U_\pi(x\wedge d(\pi),\pi)-(\mu_0+\hat \mu \pi)U(x\wedge d(\pi),\pi)\Big)\big[\theta \pi(1-\pi)\big]^{-2}.
\end{align*}

Finally, notice that $\pi\mapsto d(\pi)$ has at most countably many jumps for $\pi\in [0,1]$, hence $f(x,\pi)=\lim_{\eps\to 0}f(x,\pi+\eps)$ for a.e.~$\pi\in[0,1]$. Moreover, let $(\pi^J_k)_{k\ge 1}$ be the collection of jump points of $d$ and denote 
\[
\mathcal{N}:=\bigcup_{k\ge 1}\Big([d(\pi^J_k),\infty)\times\{\pi^J_k\}\Big). 
\]
Then 
\[
f(x,\pi)=\lim_{\eps\to 0}f(x,\pi+\eps)\quad\text{for $(x,\pi)\in\cO\setminus\mathcal{N}$}.
\]
Since $\mathcal{N}$ has zero Lebesgue measure in $\cO$, we conclude that \eqref{vpipi} holds.
\end{proof}

In the remainder of the paper we will always consider the representative of $v_{\pi\pi}$ given by the expression in \eqref{vpipi}. From \eqref{vpipi} and $U\in C^1(\cO)$ we derive the next result.
\begin{corollary}\label{cor:vpipi2}
Under Assumption \ref{ass:mu} the function $v_{\pi\pi}$ in \eqref{vpipi} is continuous in $\overline{\cC}\cap\cO$.
\end{corollary}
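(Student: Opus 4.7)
My plan is to observe that all the complication in the formula \eqref{vpipi} comes from the truncation $x\wedge d_+(\pi)$, and that on $\overline{\cC}\cap\cO$ this truncation collapses to $x$. Once this is established, continuity follows immediately from the regularity of $U$ already obtained in Corollary \ref{cor:vC1}.

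\textbf{Step 1: describe $\overline{\cC}\cap\cO$.} The first task is to verify that
\[
\overline{\cC}\cap\cO=\{(x,\pi)\in\cO\,:\,x\le d_+(\pi)\}.
\]
By \eqref{C-last} we have $\cC=\{(x,\pi)\in\cO\,:\,x<d(\pi)\}$. For the inclusion ``$\subseteq$'', pick $(x,\pi)\in\overline{\cC}\cap\cO$ and an approximating sequence $(x_n,\pi_n)\to(x,\pi)$ with $x_n<d(\pi_n)$; monotonicity of $d$ together with $d\le d_+$ and right-continuity of $d_+$ readily give $x\le d_+(\pi)$. Conversely, for $(x,\pi)$ with $x\le d_+(\pi)$, the choice of any strictly decreasing sequence $\pi_n\downarrow\pi$ satisfies $d(\pi_n)\to d_+(\pi)$, and taking $x_n=x\wedge(d(\pi_n)-1/n)$ produces points of $\cC$ converging to $(x,\pi)$.

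\textbf{Step 2: simplify \eqref{vpipi} and conclude continuity.} On $\overline{\cC}\cap\cO$ we have $x\wedge d_+(\pi)=x$, so \eqref{vpipi} reduces to
\begin{align*}
v_{\pi\pi}(x,\pi)=\frac{2}{\big(\theta\pi(1-\pi)\big)^{2}}\Big[&\rho\int_0^{x}U(\zeta,\pi)\,d\zeta-\tfrac{\sigma^{2}}{2}U_x(x,\pi)\\
&-\hat\mu\pi(1-\pi)U_\pi(x,\pi)-(\mu_0+\hat\mu\pi)U(x,\pi)\Big].
\end{align*}
The prefactor $(\theta\pi(1-\pi))^{-2}$ is continuous on $\cO$; the functions $U$, $U_x$ and $U_\pi$ are continuous on $\cO$ by Corollary \ref{cor:vC1}; and the map $(x,\pi)\mapsto\int_0^{x}U(\zeta,\pi)\,d\zeta$ is continuous thanks to the bound \eqref{gradb} (which allows a direct application of dominated convergence in the $\pi$-variable, while continuity in $x$ is immediate). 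Hence $v_{\pi\pi}$ as displayed above is continuous on the whole of $\cO$, in particular on $\overline{\cC}\cap\cO$.

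\textbf{Main obstacle.} The only non-routine point is the identification of $\overline{\cC}\cap\cO$, because the boundary $d(\cdot)$ may have jumps and the formula \eqref{vpipi} uses $d_+$ rather than $d$. However, the left-continuity and monotonicity of $d$, together with the definition of $d_+$ as the right-continuous modification, make this a short verification; once it is in place, the continuity statement is essentially a direct consequence of $U\in C^1(\cO)$.
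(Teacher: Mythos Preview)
Your proof is correct and follows essentially the same route as the paper's: both observe that on $\overline{\cC}\cap\cO$ one has $x\le d_+(\pi)$, so that the truncation $x\wedge d_+(\pi)$ in \eqref{vpipi} collapses to $x$, and continuity then follows from $U\in C^1(\cO)$. Your Step~1 proves the full set equality $\overline{\cC}\cap\cO=\{x\le d_+(\pi)\}$, whereas the paper only states (and only needs) the inclusion ``$\subseteq$''; the extra inclusion is harmless but not required.
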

\begin{proof}
%Continuity holds in $\cC\cap\cO$ by Remark \ref{rem:reg}. 
It is sufficient to notice that for any $(x,\pi)\in\overline{\cC}\cap\cO$ we have $x\le d_+(\pi)$. Hence
\begin{align*}
v_{\pi\pi}(x,\pi)=2&\Big[\rho\!\!\int_0^{x}U(\zeta,\pi)d\zeta-\frac{\sigma^2}{2}U_x(x,\pi)\nonumber\\[+4pt]
&-\hat\mu\pi(1-\pi)U_\pi(x,\pi)-(\mu_0+\hat\mu\pi)U(x,\pi)\Big]\big(\theta\pi(1-\pi)\big)^{-2},
\end{align*}
for all $(x,\pi)\in\overline{\cC}\cap\cO$. Continuity of $v_{\pi\pi}$ now follows from $U\in C^1(\cO)$. 
\end{proof}

Now that we have a candidate solution for the variational problem in Theorem \ref{thm:verif}, we would like also to construct a candidate optimal control. Recalling $\cI_v$ as in \eqref{Iv} and noticing that $v_x=U$ we immediately see that 
$\cI_v=\cC$. Then, given $(x,\pi)\in\cO$ we define $\P_{x,\pi}$-a.s.~the process
\begin{align}\label{D}
\DD_t:=\sup_{0\le s\le t}\left[X_s-d(\pi_s)\right]^+
\end{align}
where we recall that $X$ is the uncontrolled dynamic
\[
X_t=x+\int_0^t(\mu_0+\hat \mu\pi_s)ds+\sigma W_t, \quad\text{$\P_{x,\pi}$-a.s.}
\]
We also recall the notation $\gamma^{\DD}:=\inf\{t\ge 0\,:\,X^{\DD}_t\le 0\}$.

Some of the arguments in the proof of the next lemma are borrowed from \cite[Sec.~5]{DeAFeFe17}. 
\begin{lemma}\label{lem:D}
Let Assumption \ref{ass:mu} hold. The process $\DD$ in \eqref{D} belongs to $\cA$ (i.e.~it is admissible). The treble $(X^{\DD}_t, \DD_t, \pi_t)_{t\ge0}$ solves the Skorokhod reflection problem in $\cC$, that is, $\P_{x,\pi}$-a.s.~for all $0\le t\le \gamma^{\DD}$ we have 
\begin{align}
\label{SK1b}&(X^{\DD}_t, \pi_t)\in\overline \cC,\\
\label{SK2b}&d\DD_t=\mathds{1}_{\{(X^{\DD}_{t-}, \pi_t)\notin \cC\}}d\DD_t,\\
\label{SK3b}&\int_0^{\Delta \DD_t}\mathds{1}_{\{(X^{\DD}_{t-}-z,\pi_t)\in \,\cC\}}dz=0.
\end{align}
\end{lemma}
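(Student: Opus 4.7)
The plan is to check, in sequence: (a) $\DD\in\cA$, (b) the state-space constraint \eqref{SK1b}, and (c) the flat-off conditions \eqref{SK2b}-\eqref{SK3b}. The running-maximum formula for $\DD$ is standard, and everything reduces to exploiting the structural properties of $d(\cdot)$ obtained in Section \ref{sec:refl-cr} (non-decreasing, left-continuous, $0\le d\le a_*$) together with the monotonicity of the supremum.

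Admissibility is immediate except for right-continuity of $\DD$. I would deduce it by noting that a non-decreasing left-continuous function is lower semi-continuous, so $s\mapsto d(\pi_s)$ is lsc (composition with continuous $\pi$), hence $s\mapsto[X_s-d(\pi_s)]^+$ is upper semi-continuous, and the running supremum of a usc process is right-continuous. For the jump-size constraint, at any jump time one has $\Delta\DD_t = X_t-d(\pi_t)-\DD_{t-} = X^\DD_{t-} - d(\pi_t)\le X^\DD_{t-}$ because $d(\pi_t)\ge 0$. Condition \eqref{SK1b} then follows from the trivial inequality $\DD_t\ge [X_t-d(\pi_t)]^+\ge X_t-d(\pi_t)$, which gives $X^\DD_t\le d(\pi_t)\le d_+(\pi_t)$, placing $(X^\DD_t,\pi_t)\in\overline\cC$ via \eqref{C-last}. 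The bound $X^\DD_t\ge 0$ on $[0,\gamma^\DD]$ is built into the definition of $\gamma^\DD$ together with the right-continuity just established.

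For \eqref{SK2b} and \eqref{SK3b}, the key observation is that $\DD$ increases only at times $t$ where $[X_\cdot-d(\pi_\cdot)]^+$ attains a new running maximum. During continuous-increase intervals this pins $X^\DD_t$ (and hence $X^\DD_{t-}$) to the value $d(\pi_t)$, so $(X^\DD_{t-},\pi_t)\in\cS$ by \eqref{S-last}. At a jump time -- possible at $t=0$ (when $x>d(\pi)$) or at $t>0$ (when $\pi$ crosses a discontinuity of $d$ from above, so that $\liminf_{s\uparrow t}d(\pi_s)>d(\pi_t)$) -- the jump takes $X^\DD$ from $X^\DD_{t-}=X_t-\DD_{t-}>d(\pi_t)$ down exactly to $X^\DD_t=d(\pi_t)$. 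Hence for every $z\in[0,\Delta\DD_t)$ we have $X^\DD_{t-}-z>d(\pi_t)$, i.e.~$(X^\DD_{t-}-z,\pi_t)\in\cS$, which simultaneously yields $(X^\DD_{t-},\pi_t)\notin\cC$ (verifying \eqref{SK2b} at jump times) and makes the integrand in \eqref{SK3b} vanish.

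The only genuinely subtle point is the right-continuity of $\DD$ in the presence of possible discontinuities of $d$; the lsc/usc argument above disposes of it, and the rest follows the classical Skorokhod-reflection template with the boundary $d(\pi_t)$ playing the role of a stochastic upper barrier.
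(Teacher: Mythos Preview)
Your proposal is correct and follows essentially the same approach as the paper. Both rely on the lower semi-continuity of $d(\cdot)$ to get upper semi-continuity of $s\mapsto [X_s-d(\pi_s)]^+$ and hence right-continuity of the running supremum, and both derive \eqref{SK1b} from the trivial inequality $\DD_t\ge X_t-d(\pi_t)$. The only cosmetic difference is in \eqref{SK2b}: the paper argues contrapositively (if $X^{\DD}_{t_1}<d(\pi_{t_1})$ then usc gives an $\eps$-window on which $\DD$ is flat), whereas you argue directly that at points of increase the running sup is attained, forcing $X^{\DD}_{t-}=d(\pi_t)$; the two are equivalent. You also explicitly verify the jump-size admissibility constraint $\Delta\DD_t\le X^{\DD}_{t-}$, which the paper leaves implicit.
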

\begin{proof}
It is immediate to see that $\DD$ is increasing and adapted to $(\cF_t)_{t\ge0}$. Since it is increasing then it also admits left limits at all points. In order to prove right-continuity of paths we observe that $d(\cdot)$ is non-decreasing and left continuous, hence lower semi-continuous. It then follows that the mapping $t\mapsto X_t-d(\pi_t)$ is $\P_{x,\pi}$-a.s.~upper semi-continuous. Now, obviously $\lim_{\eps\to 0}\DD_{t+\eps}\ge \DD_t$, and the reverse inequality follows by
\begin{align*}
\lim_{\eps\to 0}\DD_{t+\eps}=&\lim_{\eps\to 0}\left(\DD_t\vee\sup_{t<s\le t+\eps}\left[X_s-d(\pi_s)\right]^+\right)\\
=&\DD_t\vee\limsup_{\eps\to 0}\left[X_{t+\eps}-d(\pi_{t+\eps})\right]^+\le \DD_t\vee\left[X_t-d(\pi_t)\right]^+=\DD_t. 
\end{align*}
Hence $\DD\in\cA$.

Let us turn to the study of the Skorokhod reflection problem. Notice that, since $\pi$ is unaffected by $\DD$ we have 
\[
d(\pi_t)-X^\DD_t=d(\pi_t)-X_t+\DD_t\ge 0, \quad\text{for all $t\ge 0$, $\P_{x,\pi}$-a.s.,}
\]
where the final inequality follows from \eqref{D}. Recalling that $x< d(\pi)\iff (x,\pi)\in\cC$ we deduce that \eqref{SK1b} holds. It remains to prove \eqref{SK2b}.

Fix $\omega\in\Omega$ (outside of a null set), and fix $t_1>0$. If $X^\DD_{t_1}(\omega)<d\big(\pi_{t_1}(\omega)\big)$, then $X^{\DD}=X-\DD$ implies
\begin{align}\label{lbD}
\DD_{t_1}(\omega)>X_{t_1}(\omega)-d\big(\pi_{t_1}(\omega)\big).
\end{align} 
Due to upper semi-continuity of the map $t\mapsto X_t-d(\pi_t)$ and \eqref{lbD} there is $\eps_{\omega}:=\eps(\omega,t_1)$ such that
\[
\sup_{t_1<s\le t_1+\eps_{\omega}}\left[X_{s}(\omega)-d\big(\pi_{s}(\omega)\big)\right]^+\le \DD_{t_1}(\omega).
\]
Hence, for all $s\in[t_1,t_1+\eps_\omega]$ we have
\[
\DD_s(\omega)=\DD_{t_1}(\omega)\vee\sup_{t_1<s\le t_1+\eps_{\omega}}\left[X_{s}(\omega)-d\big(\pi_{s}(\omega)\big)\right]^+=\DD_{t_1}(\omega),
\]
which proves \eqref{SK2b} for all $0<t\le \gamma^\DD$. By right-continuity the result extends to  $0\le t\le \gamma^\DD$.
Finally, it follows from \eqref{D} that jumps of $\DD$ may only occur along vertical jumps of the boundary $d$, hence \eqref{SK3b} holds. 
\end{proof}

We can finally conclude the section by providing the solution of the dividend problem with partial information.
\begin{theorem}\label{thm:sol}
Recall $V$ from \eqref{P2} and $\DD$ from \eqref{D} and let Assumption \ref{ass:mu} hold. Then we have
\begin{align}\label{VV}
V(x,\pi)=\int_0^x U(\zeta,\pi)d\zeta,\qquad (x,\pi)\in\overline\cO
\end{align}
and $D^*=\DD$ is an optimal control.
\end{theorem}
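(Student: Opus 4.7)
The plan is to apply the verification theorem, Theorem \ref{thm:verif}, to the candidate $v$ defined in \eqref{VV}, taking $D^v := \DD$ from Lemma \ref{lem:D}. The regularity $v \in C(\overline\cO) \cap C^1(\cO)$ with $v_{xx}, v_{x\pi} \in C(\cO)$ is provided by Corollary \ref{cor:U-C2}, while $v_{\pi\pi} \in L^\infty_{loc}(\cO)$ (and in fact continuous on $\overline\cC \cap \cO$) is given by Proposition \ref{prop:vpipi} and Corollary \ref{cor:vpipi2}. The sublinear bound $0 \le v(x,\pi) \le c\,x$ follows from the uniform estimate $1 \le U \le c$ in \eqref{gradb}, and $v(0,\pi) = 0$ is immediate from the definition of $v$ as an integral from $0$.

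The core step is the variational identity \eqref{HJB1}. Since $v_x = U$, the inaction set $\cI_v$ coincides with $\cC$. In $\cC$ one has $1 - v_x = 1 - U < 0$; to establish $(\cL_{X,\pi} - \rho)v = 0$, I would integrate the PDE $(\cL_{X,\pi} - \rho)U = 0$ (Lemma \ref{lem:BVP1}, rewritten in $(x,\pi)$-coordinates) in the $\zeta$-variable from $0$ to $x$, absorb the boundary terms at $\zeta = 0$ through the reflection/creation condition \eqref{HJBu3b} of Proposition \ref{prop:b-cond}, and identify $v_{\pi\pi}$ with the pointwise representative \eqref{vpipi} (which in $\cC$ simplifies as $x \wedge d_+(\pi) = x$); the algebraic identity $\sigma\theta = \hat\mu$ closes the cancellation. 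In $\cS$, $v_x = U \equiv 1$ yields $1 - v_x = 0$ and $v_{xx} = v_{x\pi} = 0$, and inserting the smooth-fit values $U = 1$, $U_x = U_\pi = 0$ on $\partial\cC\setminus(\{0\}\times[0,1])$ (Lemma \ref{lem:sm-f}, Corollary \ref{cor:vC1}) into \eqref{vpipi} reduces $(\cL_{X,\pi} - \rho)v(x,\pi)$ to $-\rho(x - d(\pi))$ at every continuity point of the non-decreasing boundary $d$. Non-positivity on $\cS = \{x \ge d(\pi)\}$ is then immediate, and since $d$ has at most countably many jumps the variational equation holds a.e.\ on $\cO$ as required.

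For the optimality half, Corollaries \ref{cor:U-C2} and \ref{cor:vpipi2} give $v \in C^2(\overline{\cI_v} \cap \cO)$, while Lemma \ref{lem:D} shows that $\DD \in \cA$ and that $(X^{\DD}, \DD, \pi)$ solves the Skorokhod reflection problem in $\cC$, which is exactly \eqref{SK-0}--\eqref{SK-2}. Theorem \ref{thm:verif} then delivers $V = v$ on $\cO$ and optimality of $D^* = \DD$.

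The main technical obstacle I anticipate is the algebraic verification inside $\cC$: matching the boundary contribution at $\zeta = 0$ arising from $x$-integration of $(\cL_{X,\pi} - \rho)U = 0$ with precisely the combination annihilated by the creation condition \eqref{HJBu3b} requires both the identity $\sigma\theta = \hat\mu$ and the specific representative of $v_{\pi\pi}$ fixed in \eqref{vpipi}. A subtler edge case is that at those $\pi$ for which $d(\pi) = 0$, the bound $(\cL_{X,\pi} - \rho)v \le 0$ reduces to $\mu_0 + \hat\mu\pi \le 0$; this is ensured precisely by Proposition \ref{prop:notempty}, since $(0,\pi)\in\cS$ forces $\pi \le -\mu_0/\hat\mu$.
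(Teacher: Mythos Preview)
Your proposal is correct and follows essentially the same route as the paper's proof: apply Theorem \ref{thm:verif} to $v$ defined by \eqref{v}, collect regularity from Corollary \ref{cor:U-C2}, Proposition \ref{prop:vpipi} and Corollary \ref{cor:vpipi2}, verify \eqref{HJB1} separately in $\cC$ and $\cS$, and invoke Lemma \ref{lem:D} for the Skorokhod conditions. The only stylistic difference is that in $\cC$ you describe the verification as ``integrate $(\cL_{X,\pi}-\rho)U=0$ in $\zeta$ and absorb the boundary terms via \eqref{HJBu3b}'', whereas the paper computes $(\cL_{X,\pi}-\rho)v$ directly and substitutes the formula \eqref{vpipi}; these are equivalent since \eqref{vpipi} was itself obtained by exactly that integration together with \eqref{HJBu3b}. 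Your extra remark on the edge case $d(\pi)=0$ is cautious but not actually needed: once \eqref{vpipi} is plugged in, the $(\mu_0+\hat\mu\pi)$ terms cancel and the computation in $\cS$ collapses to $-\rho(x-d(\pi))\le 0$ regardless of whether $d(\pi)=0$.
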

\begin{proof}
We need to check that $v$ in \eqref{v} fulfills the assumptions of Theorem \ref{thm:verif}. It is immediate that $0\le v(x,\pi)\le c\,x$ thanks to \eqref{gradb}, hence $v(0,\pi)=0$. Moreover, Corollary \ref{cor:U-C2}, Proposition \ref{prop:vpipi} and Corollary \ref{cor:vpipi2} guarantee that $v$ is smooth enough. 
%and notice that for $\varphi=\pi/(1-\pi)$ we have
%\begin{align*}
%U(x,\pi)=\frac{1}{1+\varphi}\OU(x,\varphi).
%\end{align*}
%Hence, recalling also \eqref{sublin} we get $0\le U(x,\pi)\le c$ for all $(x,\pi)\in\cO$ and a suitable $c>0$.

Next we verify that \eqref{HJB1} holds. Once again, notice that $\cI_v=\cC$ and let us pick $(x,\pi)\in \cC$. By direct calculation
\begin{align}\label{eq:sol1}
(\cL_{X,\pi}v-\rho v)=&\big[\tfrac{\sigma^2}{2}U_x+\hat \mu \pi(1-\pi)U_\pi+(\mu_0+\hat \mu \pi)U\big](x,\pi)\nonumber\\
&-\rho\int_0^x U(\zeta,\pi)d\zeta+\tfrac{\theta^2}{2}\pi^2(1-\pi)^2v_{\pi\pi}(x,\pi).
\end{align}
Then, substituting the expression \eqref{vpipi} for $v_{\pi\pi}$ in the above, and recalling that $(x,\pi)\in\cC$ was arbitrary, we obtain
\[
(\cL_{X,\pi}v-\rho v)(x,\pi)=0,\quad (x,\pi)\in\cC.
\]

Now, pick $(x,\pi)\in\cS$, recall that $U_x=U_\pi=0$ and $U=1$ in $\cS$ and repeat the calculations in \eqref{eq:sol1}. This gives
\begin{align*}%\label{eq:sol2}
(\cL_{X,\pi}v-\rho v)=&(\mu_0+\hat \mu \pi)-\rho\int_0^x U(\zeta,\pi)d\zeta+\tfrac{\theta^2}{2}\pi^2(1-\pi)^2v_{\pi\pi}(x,\pi)\\
=&-\rho\int_{d(\pi)}^xU(\zeta,\pi)d\zeta=-\rho(x-d(\pi))\le 0,
\end{align*}
where we have used \eqref{vpipi}, upon noticing that $U_x(d(\pi),\pi)=U_\pi(d(\pi),\pi)=0$ and $U(d(\pi),\pi)=1$.

Finally, it was shown in Lemma \ref{lem:D} that \eqref{SK-0}, \eqref{SK-1} and \eqref{SK-2} hold with our choice of $D^*=\DD$. 
\end{proof}

%%%%%%%%%%%%%%%%%%%%%%%%%%%%%
\appendix
\section{Verification theorem}

\begin{proof}[\textbf{Proof of Theorem \ref{thm:verif}}]
Here we largely follow the proof in \cite[Thm.~4.1, Ch.~VIII]{FS}. Let $\phi\in C^\infty_c(B_1(0))$, where $B_1(0)$ is a ball in $\R^2$, centred in zero and with radius $1$. Moreover, assume $\phi\ge 0$ and $\int_{\R^2}\phi(z)dz=1$. For each $k\ge1$ we construct the standard mollifier $\phi_k(z):=k^{-2}\phi(kz)$ and notice that $\phi_k\in C^\infty_c(B_{1/k}(0))$. We then define a sequence $(v^k)_{k\ge 1}\subset C^{\infty}(\overline \cO)$, with 
\[
v^k(x,\pi):=(v*\phi_k)(x,\pi)=\int^\infty_{0}\int_0^1 v(\zeta,\eta)\phi_k(x-\zeta,\pi-\eta)d\eta d\zeta.
\] 
Thanks to the assumed regularity of $v$, for any compact $K\subset \cO$ we have 
\begin{align}
\label{conv-1}&\lim_{k\to\infty}\|v^k-v\|_{L^\infty(K)}=0,\\
\label{conv-2}&\lim_{k\to\infty}\left(\|v^k_x-v_x\|_{L^\infty(K)}+\|v^k_\pi-v_\pi\|_{L^\infty(K)}\right)=0,\\
\label{conv-3}&\lim_{k\to\infty}\left(\|v^k_{xx}-v_{xx}\|_{L^\infty(K)}+\|v^k_{x\pi}-v_{x\pi}\|_{L^\infty(K)}\right)=0.
\end{align} 
Moreover, we notice that since $v_{\pi\pi}\in L^\infty_{loc}(\cO)$, by the definition of weak derivative it is not hard to verify that $(v^k)_{\pi\pi}=(v_{\pi\pi}*\phi_k)$. 

Letting $K_n:=[n^{-1},n]\times[n^{-1},1-n^{-1}]$, thanks to the above (and continuity of the coefficients in $\cL_{X,\pi}$) it is easy to show that for any $n\ge 1$ we have
\begin{align}\label{conv-4}
\lim_{k\to\infty}\big\|(\cL_{X,\pi}v^k)-[(\cL_{X,\pi}v)*\phi_k]\big\|_{L^\infty(K_n)}=0.
\end{align}
Finally, since $(\cL_{X,\pi}-\rho) v\le 0$ a.e.~in $\cO$, then also $(\cL_{X,\pi}v-\rho v)*\phi_k \le 0$ everywhere~in $\cO$ and from \eqref{conv-4} we conclude
\begin{align}\label{conv-5}
\limsup_{k\to\infty}\sup_{(x,\pi)\in K_n}(\cL_{X,\pi}v^k-\rho v^k)(x,\pi)\le 0
\end{align}

Now fix $\eps>0$ and for any admissible control $D$, let $\gamma^D_\eps:=\inf\{t\ge0\,:\,X^D_t\le \eps\}$ and $\tau_n:=\inf\{t\ge 0\,:\, (X^D_t,\pi_t)\notin K_n\}$, $\P_{x,\pi}$-a.s. By an application of It\^o calculus, setting $\zeta_{\eps,n}:=\gamma^D_\eps\wedge\tau_n$ we derive
\begin{align}\label{verif0}
\E_{x,\pi}&\,\left[e^{-\rho(\zeta_{\eps,n}\wedge t)}v^k(X^D_{\zeta_{\eps,n}\wedge t},\pi_{\zeta_{\eps,n}\wedge t})\right]-v^k(x,\pi)\nonumber\\
=&\E_{x,\pi}\left[\int^{\zeta_{\eps,n}\wedge t}_{0}e^{-\rho s}(\cL_{X,\pi}-\rho)v^k(X^D_s,\pi_s)ds-\int^{\zeta_{\eps,n}\wedge t}_{0}e^{-\rho s}v^k_x(X^D_s,\pi_s)dD^{c}_s\right]\\
&+\E_{x,\pi}\Big[\sum_{s\le \zeta_{\eps,n}\wedge t}e^{-\rho s}\left(v^k(X^D_s,\pi_s)-v^k(X^D_{s-},\pi_s)\right)\Big],\nonumber
\end{align}
where $D^c$ denotes the continuous component of the process $D$.

Noticing that $(X^D_s,\pi_s)_{0\le s\le \zeta_{\eps,n}}\in K_n$ we can use \eqref{conv-1}--\eqref{conv-3} and \eqref{conv-5} to pass to the limit as $k\to\infty$ and obtain
\begin{align}\label{verif1}
&\E_{x,\pi}\left[e^{-\rho(\zeta_{\eps,n}\wedge t)}v(X^D_{\zeta_{\eps,n}\wedge t},\pi_{\zeta_{\eps,n}\wedge t})\right]\!-\!v(x,\pi)\\
&\le \!\E_{x,\pi}\Big[\!-\!\int^{\zeta_{\eps,n}\wedge t}_{0}\!\!e^{-\rho s}v_x(X^D_s,\pi_s)dD^{c}_s\!-\!\sum_{s\le \zeta_{\eps,n}\wedge t}\!e^{-\rho s}\!\int_0^{\Delta D_s}\!\!v_x(X^D_{s-}-z,\pi_s)dz\Big],\notag
\end{align}
where we have also used 
\[
v(X^D_s,\pi_s)-v(X^D_{s-},\pi_s)=-\int_0^{\Delta D_s}v_x(X^D_{s-}-z,\pi_s)dz.
\]
Using that $v_x\ge 1$, thanks to \eqref{HJB1}, and rearranging terms in \eqref{verif1} we get
\begin{align*}%\label{verif1}
v(x,\pi)\ge\E_{x,\pi}&\,\left[e^{-\rho(\zeta_{\eps,n}\wedge t)}v(X^D_{\zeta_{\eps,n}\wedge t},\pi_{\zeta_{\eps,n}\wedge t})\right]+\E_{x,\pi}\left[\int^{\zeta_{\eps,n}\wedge t}_{0}e^{-\rho s}dD_s\right].
\end{align*}
Letting $n\to\infty$, $\eps\to0$, $t\to\infty$ and recalling that $0\le v(x,\pi)\le c\,x$ we obtain 
\[
v(x,\pi)\ge \E_{x,\pi}\left[\int^{\gamma^D}_{0}e^{-\rho s}dD_s\right]. 
\]
Since $D$ is arbitrary, such inequality also implies $v\ge V$.

In order to show that $v\le V$, it is enough to observe that for $D=D^*$ all inequalities above become strict equalities. In particular, when taking limits in \eqref{verif0} we now use that $v\in C^2(\overline{\cI}_v\cap\cO)$ implies
\begin{align}\label{conv-6}
\lim_{k\to\infty}\sup_{(x,\pi)\in \overline{\cI}_v\cap K_n}\big|(\cL_{X,\pi}-\rho)(v^k-v)(x,\pi)\big| = 0.
\end{align} 
Also, we use that $(X^{D^*}_t,\pi_t)\in\overline{\cI}_v\cap K_n$ for all $0\le t\le \zeta_{\eps,n}$ and that $v_x(X^{D^*}_t,\pi_t)=1$ for all $t\in\text{supp}\{dD^*\}$.

%Now we would like to use \eqref{HJB1}. The only technical issue concerns the fact that $\cL_{X,\pi}v$ is not well defined %at points $(d(\pi), \pi)$. However, $(\cL_{X,\pi}-\rho)v(x,\pi)\le 0$ for all $(x,\pi)\in \cO\setminus \cU_\delta$ with $%\cU_\delta:=\{(x,\pi)\in\cO\,:\,d(\pi)-\delta\le x\le d(\pi)+\delta\}$. Then we get 
%\begin{align}
%\E_{x,\pi}&\,\left[e^{-\rho(\zeta_{\eps,n}\wedge t)}v(X_{\zeta_{\eps,n}\wedge t},\pi_{\zeta_{\eps,n}\wedge t})\right]-v(x,%\pi)\nonumber\\
%\le&\E_{x,\pi}\left[\int^{\zeta_{\eps,n}\wedge t}_{0}e^{-\rho s}(\cL_{X,\pi}-\rho)v(X_s,\pi_s)\mathds{1}_{\{(X^D_s,\pi_s)%\in\cU_\delta\}}ds-\int^{\zeta_{\eps,n}\wedge t}_{0}e^{-\rho s}dD_s\right]\\
%\le& c_0\int_0^t e^{-\rho s}\P_{x,\pi}(d(\pi_s)-\delta\le X^D_s\le d(\pi_s)+\delta)ds-\E_{x,\pi}\left[\int^{\zeta_{\eps,n}\wedge t}_{0}e^{-\rho s}dD_s\right]\nonumber
%\end{align}
%where we use that $(\cL_{X,\pi}-\rho)v(x,\pi)\le c_0$ for $(x,\pi)\in \cU_\delta$ and all $\delta\in(0,\delta_0]$ with $\delta_0>0$ given and fixed{\color{red}(TDA: here I'm using that $d(\cdot)$ is bounded on $[0,1]$)}. Since $X^D\leX^0$ pathwise, then
%\begin{align*}
%\P_{x,\pi}(d(\pi_s)-\delta\le X^D_s\le d(\pi_s)+\delta)=\P_{x,\pi}(X^D_s\ge d(\pi_s)-\delta)-\P_{x,\pi}(X^D_s\ge d(\pi_s)+%\delta)\le 2
%\end{align*}
\end{proof}
%%%%%%%%%%%%%%%%%%%%%%%%%%%%%
\vspace{+10pt}
\textbf{Acknowledgments}: This research was partially supported by the EPSRC grant EP/R021201/1, ``\emph{A probabilistic toolkit to study regularity of free boundaries in stochastic optimal control}''. Part of this work was done during a visit in the US at Rice University, Houston, Texas, in November 2017 and I thank P.~Ernst for the hospitality and Q.~Zhou and P.~Ernst for interesting discussions. I also thank E.~Ekstr\"om for the hospitality at the University of Uppsala in March 2018 and for suggesting the change of measure in Section \ref{sec:girsanov}. Finally, I would like to thank S.~Villeneuve for pointing me to useful references.

%%%%%%%%%%%%%%%%%%%%%%%%%%%%%

\end{document}